\documentclass[11pt,a4paper]{amsart}
\usepackage[T1]{fontenc}
\usepackage[utf8]{inputenc}
\usepackage{lmodern}
\usepackage{amsmath,amssymb,amsthm}
\usepackage[margin=28mm]{geometry}
\usepackage{microtype}
\usepackage{tikz-cd}
\usepackage[normalem]{ulem}
\usepackage{xcolor}
\usepackage[colorlinks=true, pagebackref=true, linkcolor=red, citecolor=blue, urlcolor=blue]{hyperref}
\usepackage[alphabetic,backrefs]{amsrefs} 
\renewcommand*{\backref}[1]{}
\renewcommand*{\backrefalt}[4]{%
    \ifcase #1 %
    \or
        $\uparrow$ \textcolor{red}{#2}%
    \else
        $\uparrow$ \textcolor{red}{#2}%
    \fi
}
\allowdisplaybreaks
\newtheorem{theorem}{Theorem}[section]
\newtheorem{proposition}[theorem]{Proposition}
\newtheorem{lemma}[theorem]{Lemma}
\newtheorem{corollary}[theorem]{Corollary}
\theoremstyle{definition}
\newtheorem{definition}[theorem]{Definition}
\newtheorem{example}[theorem]{Example}

\newcommand{\C}{\mathbb C}
\newcommand{\R}{\mathbb R}
\newcommand{\Q}{\mathbb Q}
\newcommand{\Z}{\mathbb Z}
\newcommand{\ii}{\mathbf i}
\renewcommand{\d}{\mathrm d}
\newcommand{\db}{\overline\partial}
\newcommand{\g}{\mathfrak g}
\newcommand{\h}{\mathfrak h}
\newcommand{\LL}{\mathfrak L}
\newcommand{\UU}{\mathfrak U}
\newcommand{\br}[2]{\llbracket #1,#2\rrbracket}
\newcommand{\llbracket}{[\![}
\newcommand{\rrbracket}{]\!]}
\DeclareMathOperator{\Ann}{Ann}
\DeclareMathOperator{\ad}{ad}
\DeclareMathOperator{\tr}{tr}
\DeclareMathOperator{\spec}{spec}
\DeclareMathOperator{\im}{im}
\DeclareMathOperator{\coker}{coker}
\DeclareMathOperator{\id}{id}

\title{On the generalized cohomologies of almost abelian solvmanifolds}
\author[Andrade]{Mauro de Andrade Pinto}
\address{Instituto de Matem\'atica, Estat\'istica e Computa\c c\~ao Cient\'ifica,
Universidade Estadual de Campinas, Campinas, Brazil}
\email{mauroap1415@gmail.com}
\author[Campon\^es do Brasil]{Let\'icia Campon\^es do Brasil Maia}
\address{Instituto de Matem\'atica, Estat\'istica e Computa\c c\~ao Cient\'ifica,
Universidade Estadual de Campinas, Campinas, Brazil}
\email{leticiacampones@gmail.com}
\author[Cavenaghi]{Leonardo F. Cavenaghi}
\address{Institute of Mathematics and Informatics, Bulgarian Academy of Sciences,
Sofia, Bulgaria}
\email{leonardofcavenaghi@gmail.com}
\author[Martins]{Pedro Antonio Muniz Martins}
\address{Instituto de Matem\'atica, Estat\'istica e Computa\c c\~ao Cient\'ifica,
Universidade Estadual de Campinas, Campinas, Brazil}
\email{pedroa.muniz9@gmail.com}
\date{}

\begin{document}
\begin{abstract}
Generalized complex structures give rise to generalized Dolbeault,
Bott--Chern and Aeppli cohomologies. We study these cohomologies for
invariant generalized complex structures on almost abelian Lie algebras
$\mathfrak g_A=\mathbb R e_0\ltimes_A\mathfrak h$, where $\mathfrak h$ is
an abelian ideal of codimension one, and on almost abelian solvmanifolds
$\Gamma\backslash G_A$. We first show that the invariant generalized Dolbeault
cohomology can be identified with the Chevalley--Eilenberg cohomology of the complex Lie
algebra $\mathcal L$, the $+{\mathbf i}$-eigenspace of the structure, with
coefficients in a character determined by the canonical line. We prove that $\mathcal L$ is itself an almost abelian Lie algebra; thus the cohomology reduces to the kernel and cokernel of an explicit derivation. Assuming that $A$ is diagonalizable over $\mathbb R$, we obtain a combinatorial formula for the cohomology dimensions and compare with compact quotients. Other comparison results are discussed, noting that the cohomology of invariant forms always injects into that of $\Gamma\backslash G_A$.
\end{abstract}
\maketitle

\section{Introduction}

Generalized complex geometry, introduced by Hitchin and developed by
Gualtieri and Cavalcanti, provides a unified framework for complex and
symplectic geometry \cites{Hitchin2003,Gualtieri2011,Cavalcanti2004,Cavalcanti2005}. Let $X$ be a
smooth manifold of real dimension $2n$, and endow $T_X\oplus T^*_X$ with
the split-signature pairing
\[
\langle Y+\xi,W+\eta\rangle=\tfrac12\bigl(\xi(W)+\eta(Y)\bigr).
\]
A \emph{generalized almost complex structure} on $X$ is an endomorphism
$\mathcal J$ of $T_X\oplus T^*_X$ that is orthogonal for this pairing and
satisfies $\mathcal J^2=-\id$. Its $+\ii$-eigenbundle
\[
L_X=\ker(\mathcal J-\ii\id)\subset(T_X\oplus T^*_X)\otimes_\R\C
\]
satisfies $(T_X\oplus T^*_X)\otimes_\R\C=L_X\oplus\overline{L_X}$.
Orthogonality forces $L_X$ to be isotropic. Since its complex rank is $2n$, it
is maximal isotropic. The structure is \emph{integrable} if the space of
sections $C^\infty(L_X)$ is closed under the complexified Courant
bracket. It is then called a \emph{generalized complex structure}, and
$X$ a \emph{generalized complex manifold}.

The complexified bundle $(T_X\oplus T^*_X)\otimes_\R\C$ acts on complex
differential forms by Clifford multiplication,
$c(Y+\xi)\alpha=\iota_Y\alpha+\xi\wedge\alpha$. Maximal isotropy implies
that $L_X$ is the Clifford annihilator
\[
L_X=\{v\in(T_X\oplus T^*_X)\otimes_\R\C:\ c(v)K_X=0\}
\]
of a unique line subbundle
$K_X\subset\bigwedge^\bullet T^*_X\otimes_\R\C$. A line whose
annihilator is maximal isotropic is a \emph{pure spinor line}, and we
call $K_X$ the \emph{canonical line bundle} of $\mathcal J$. Around each
point it has a smooth local generator $\rho$, and at each point $x$
\[
\rho_x=c_x\,e^{\beta+\ii\omega}\wedge\Omega,\qquad
\Omega=\theta_1\wedge\cdots\wedge\theta_k,\qquad
\omega^{n-k}\wedge\Omega\wedge\overline\Omega\ne0,
\]
where $c_x\in\C^*$, $\beta$ and $\omega$ are real two-forms, and
$\theta_1,\ldots,\theta_k$ are complex covectors, with $\Omega=1$ when
$k=0$. The integer $k$ is the \emph{type} of $\mathcal J$ at $x$.
Complex structures have type $n$, and symplectic structures have type
$0$.

Following \cites{Gualtieri2011, Cavalcanti2006}, a generalized almost complex structure
decomposes complex forms as
\[
\bigwedge^\bullet T^*_X\otimes_\R\C=\bigoplus_{j=-n}^nU^j,\qquad
U^j=\bigwedge^{n-j}\overline{L_X}\cdot K_X,
\]
where the dot denotes Clifford multiplication. Let $U^j(X)$ be the space
of smooth sections of $U^j$. Integrability is equivalent to
$\d U^j(X)\subseteq U^{j-1}(X)\oplus U^{j+1}(X)$, so that
$\d=\partial+\db$ with
$\partial^2=\db^2=\partial\db+\db\partial=0$. The bi-differential
$\Z$-graded complex $(U^\bullet(X),\partial,\db)$ defines the generalized
Dolbeault cohomologies $GH^j_{\db}(X)$ and $GH^j_\partial(X)$. Furthermore, extending the framework advanced in \cite{Angella2012} through the approach of \cite{ACK} yields the generalized Bott-Chern and Aeppli cohomologies
\[
GH^j_{BC}(X)=\frac{\ker\partial\cap\ker\db}{\im\partial\db},\qquad
GH^j_A(X)=\frac{\ker\partial\db}{\im\partial+\im\db},
\]
all computed on $U^j(X)$.

In this paper we study these cohomologies for invariant generalized
complex structures on almost abelian Lie algebras and almost abelian
solvmanifolds. A \emph{solvmanifold} is a compact quotient
$\Gamma\backslash G$, where $G$ is a simply connected solvable Lie group
and $\Gamma<G$ is a lattice, that is, a discrete cocompact subgroup. An
\emph{almost abelian} Lie algebra has the form
\[
\g_A=\R e_0\ltimes_A\h,
\]
where $\h$ is an abelian ideal of codimension one and the entire Lie bracket structure is
encoded in the matrix $A$ of $\ad_{e_0}|_\h$. Let $G_A$ be the simply
connected Lie group with Lie algebra $\g_A$. An \emph{almost abelian
solvmanifold} is a quotient $\Gamma\backslash G_A$. Left-invariant
generalized complex structures on $G_A$ descend to $\Gamma\backslash G_A$,
and we call the resulting structures \emph{invariant}. They are
determined by linear algebra on $\g_A\oplus\g_A^*$, which we now
describe.

Let $\g$ be a real Lie algebra of dimension $2n$. A generalized complex
structure on $\g$ is an endomorphism $\mathcal J$ of $\g\oplus\g^*$ with $\mathcal J^2=-\id$,
orthogonal for the natural pairing, whose $+\ii$-eigenspace
\[
\LL=\ker(\mathcal J-\ii\id)\subset(\g\oplus\g^*)_\C
\]
is closed under the invariant Courant bracket. Then $\LL$ is a complex
Lie algebra of dimension $2n$. Clifford multiplication acts on
$\bigwedge^\bullet\g^*_\C$, and the canonical line
\[
K=\{\alpha\in\bigwedge^\bullet\g^*_\C:\ c(v)\alpha=0
\text{ for every }v\in\LL\}
\]
is one-dimensional. A nonzero generator $\rho\in K$ is a pure spinor
defining $\mathcal J$. The invariant forms
$\UU^j=\bigwedge^{n-j}\overline\LL\cdot K$ form a bi-differential
complex, whose cohomologies we denote by $GH^j_{\db}(\g)$,
$GH^j_\partial(\g)$, $GH^j_{BC}(\g)$ and $GH^j_A(\g)$. This paper has
three goals, which we describe next.

\

Our first goal is to identify the invariant generalized Dolbeault complex
with a Chevalley--Eilenberg complex of the Lie algebra $\LL$
(Section~\ref{sec:spinors}). Clifford multiplication by $\LL$ annihilates
$K$, so it cannot provide the coefficient action. Instead, let $\LL$ act on
$K$ by $v\cdot s=c(v)\d s$. Since $K$ is a line, this action is given
by a character $\chi\in\LL^*$:
\[
\chi(v)\rho:=c(v)\d\rho\qquad(v\in\LL).
\]
The character does not depend on the choice of $\rho$, and it vanishes
exactly when $\rho$ is closed. Denote by $\C_\chi $ the one-dimensional
module on which $v$ acts by multiplication by $\chi(v)$. We prove
(Proposition~\ref{prop:canonical-complex}) that
\[
GH_{\db}^{n-p}(\g)\cong H^p(\LL,\C_\chi),\qquad
GH_{\partial}^{-n+p}(\g)\cong H^p(\overline\LL,\C_{\bar\chi}),
\]
where the right-hand sides are Chevalley--Eilenberg cohomologies with the
indicated coefficients. This is the invariant counterpart of Gualtieri's
description of the generalized Dolbeault complex as the Lie algebroid
complex of $L_X$ with coefficients in $K_X$ \cite{Gualtieri2011}; see
\eqref{eq:global-ACK}. Its advantage is that the twisting by the
canonical line becomes an explicit character.

For an almost abelian Lie algebra $\g_A$, the Lie algebra $\LL$ is again
almost abelian (Proposition~\ref{thm:L-almost-abelian}). The map
$\phi:\LL\to\C$, $\phi(X+\xi)=e^0(X)$, is a surjective homomorphism with
abelian kernel $\mathfrak n$. Hence, for any $Z\in\LL$ with $\phi(Z)=1$,
\[
\LL=\C Z\ltimes_C\mathfrak n,\qquad
C=\ad_Z|_{\mathfrak n}:\mathfrak n\to\mathfrak n,\qquad
\dim_\C\mathfrak n=2n-1,
\]
and the canonical character is $\chi=\tau\phi$ with $\tau=\chi(Z)$.
Consequently, the complex $C^\bullet(\LL,\C_\chi)$ splits into two-term
complexes, and (Lemma~\ref{lem:wang})
\[
GH_{\db}^{n-p}(\g_A)\cong\ker\Psi_p\oplus\coker\Psi_{p-1},\qquad
\Psi_p=\Psi_C|_{\bigwedge^p\mathfrak n^*}-\tau\id
\]
where $\Psi_C$ is the derivation of $\bigwedge^\bullet\mathfrak n^*$
extending $\eta\mapsto\eta\circ C$. If $c_1,\ldots,c_{2n-1}$ are the
eigenvalues of $C$, the eigenvalues of $\Psi_p$ are the sums
$\sum_{i\in I}c_i-\tau$ over subsets $I$ of size $p$. These statements
hold for every invariant generalized complex structure on every almost
abelian Lie algebra.

\

Our second goal is to make these formulas explicit when $A$ is
diagonalizable over $\R$ (Section~\ref{sec:diagonal}). In this case,
\cite{PaperI} characterizes the existence of invariant generalized
complex structures in terms of the spectrum of $A$
(Theorem~\ref{thm:diagonal-existence}): $\g_A$ admits one of type $k$ if
and only if the eigenvalues of $A$, with multiplicities, can be arranged
as
\[
\spec(A)=\{\lambda_1,\lambda_1,\ldots,\lambda_h,\lambda_h,
\mu,\zeta_1,-\zeta_1,\ldots,\zeta_s,-\zeta_s\},\qquad h+s=n-1,
\]
with $k=h$ or $k=h+1$. The identical pairs supply complex directions, and
the opposite pairs supply symplectic planes. The two remaining directions,
spanned by $e_0$ and an eigenvector for $\mu$, form a symplectic plane in
Model~I ($k=h$) and a complex line in Model~II ($k=h+1$);
see~\eqref{eq:models}. For these structures, $C$ is diagonalizable with
\[
\spec(C)=\{\pm\lambda_1,\ldots,\pm\lambda_h,
\pm\zeta_1,\ldots,\pm\zeta_s,\nu\},
\]
where $\nu=\mu$ in Model~I and $\nu=-\mu$ in Model~II. Moreover,
$\d\rho=-\Lambda e^0\wedge\rho$ and $\chi=-\Lambda\phi$, where
$\Lambda=\sum_i\lambda_i$ in Model~I and $\Lambda=\mu+\sum_i\lambda_i$ in
Model~II (Lemma~\ref{lem:model-integrability} and
Proposition~\ref{prop:specC}).

Let $c_1,\ldots,c_{2n-1}$ be the eigenvalues of $C$, listed with
multiplicity, so that repeated eigenvalues are distinguished by
their indices. We prove (Theorem~\ref{thm:master}) that
\[
\dim GH_{\db}^{n-p}(\g_A)=N_p+N_{p-1},\qquad
N_p=\#\Bigl\{I\subseteq\{1,\ldots,2n-1\}:
|I|=p,\ \sum_{i\in I}c_i=-\Lambda\Bigr\},
\]
with $N_p=0$ outside $0\le p\le2n-1$. Equivalently,
\[
\sum_{p=0}^{2n}\dim GH_{\db}^{n-p}(\g_A)\,t^p
=(1+t)\,[q^{-\Lambda}]\prod_{i=1}^{2n-1}\bigl(1+tq^{c_i}\bigr),
\]
where $[q^{-\Lambda}]$ extracts the coefficient of $q^{-\Lambda}$. If
$\tr A=0$, then $\dim GH_{\db}^j(\g_A)=\dim GH_{\db}^{-j}(\g_A)$.
Suppose, in addition, that the indexed family
$\lambda_1,\ldots,\lambda_h,\zeta_1,\ldots,\zeta_s$ is linearly
independent over $\Q$. Then, for either model
(Corollary~\ref{cor:generic}),
\[
N_{h+2r}=N_{h+2r+1}=\binom{s}{r}\quad(0\le r\le s),
\qquad N_p=0\text{ otherwise},
\]
so that
\[
\sum_{p=0}^{2n}\dim GH_{\db}^{n-p}(\g_A)\,t^p=t^h(1+t)^2(1+t^2)^s .
\]
In particular, the total dimension is $2^{s+2}$, and
$GH_{\db}^j(\g_A)=0$ for $|j|>n-h$. 

\

Our third goal is to decide when these invariant computations give the
cohomology of a compact quotient (Section~\ref{sec:quotients}). Let
$\Gamma<G_A$ be a lattice -- whose existence is discussed in  \cite{Andrada2023}*{Proposition 2.5} -- and $X=\Gamma\backslash G_A$. Since $G_A$ admits a lattice, it is unimodular \cite{Milnor1976}*{Lemma 6.2}; hence $\tr A=0$. Every invariant generalized complex structure
on $\g_A$ descends to $X$. The inclusion of invariant forms induces maps
$GH^j(\g_A)\to GH^j(X)$ for each of the four cohomologies.

Averaging over $X$ shows that invariant forms are always a direct
summand of $U^\bullet(X)$, compatibly with $\partial$ and $\db$. Hence
all four maps are injective. Moreover, they are all isomorphisms as soon
as the map for $GH_{\db}$ is one
(Proposition~\ref{prop:reduction-to-dbar}), similarly to \cite{ACK}*{Corollary 5.2} and \cite{Angella2019}*{Corollary 1.3}.

To prove surjectivity, we use fibrations adapted to the structure: the construction was first presented by \cite{ACK} and discussed in the almost abelian case in~\cite{PaperI}.
Suppose that $\h=\h_c\oplus\mathfrak k$ with $A$-invariant summands, that
$\mathfrak b=\g_A/\mathfrak k$ carries a complex structure $J_B$, and
that a closed two-form $\omega$ on $\g_A$ restricts to a symplectic form
on $\mathfrak k$. Then $\rho=e^{\ii\omega}\wedge\pi^*\Omega$, with
$\Omega$ a generator of the canonical line of $J_B$ and $\pi: \mathfrak{g}_A \rightarrow \mathfrak{b}$ the natural projection, defines a
generalized complex structure of type $k=\dim_\C\mathfrak b$
\cite{PaperI}*{Theorem 3.14}. Model~II structures arise in this way; Model~I structures
do not. At the Lie algebra level, $\LL$ is then an extension
\[
0\longrightarrow\mathfrak S\longrightarrow\LL
\longrightarrow\LL_B\longrightarrow0
\]
of the eigenspace $\LL_B=\mathfrak b^{0,1}\oplus\mathfrak b^{*1,0}$ of
$J_B$ by an abelian ideal $\mathfrak S\cong\mathfrak k_\C$, compatibly
with the canonical characters (Lemma~\ref{lem:L-extension}). If $\Gamma$
is compatible with $\mathfrak k$, the solvmanifold $X$ is a symplectic
torus bundle
\[
\mathbb T\longrightarrow X\longrightarrow B
\]
over a compact complex almost abelian solvmanifold $B$. The Lie algebroid
spectral sequence of this fibration \cite{ACK} and the Hochschild--Serre 
spectral sequence \cite{HS53} of $\mathfrak S\subset\LL$ have second pages given, respectively, by
the Dolbeault cohomology of $B$ with coefficients in the flat bundles
$\mathcal H^q$ of fiberwise cohomology and by its invariant counterpart.
We prove (Theorem~\ref{thm:invariance} and
Corollary~\ref{cor:BC-invariance}) the following. If invariant forms
compute $H^b(B,\Omega^a_B\otimes\mathcal H^q)$ for all $a$, $b$ and $q$,
then
\[
GH^j(\g_A)\cong GH^j(X)
\]
for all $j$, for the generalized Dolbeault, Bott--Chern and Aeppli
cohomologies alike.

In the nilpotent case, comparisons of this kind were obtained in \cite{ACK} by using the nilpotency of the relevant coefficient modules. This approach builds on the classical Dolbeault and Bott--Chern comparisons for invariant complex structures on nilmanifolds \cites{Console2001, Angella2011}, as well as Kasuya's finite-dimensional models for the Dolbeault cohomology of certain solvmanifolds \cite{Kasuya13}.

For almost abelian solvmanifolds, the bundles $\mathcal H^q$ are in general not
nilpotent. They are, however, governed by the same combinatorics as the
invariant computation. The bundle $\mathcal H^q$ admits a filtration whose successive quotients are
holomorphic line bundles $\mathcal E_\kappa$. Each $\mathcal E_\kappa$ is
the trivial bundle with $\db$-operator $\db_B-\kappa\,(e^0)^{0,1}$, and
$\kappa$ ranges over the sums of $q$ eigenvalues of $A|_{\mathfrak k}$
(Lemma~\ref{lem:characters}). For Model~II this gives the following
(Corollary~\ref{cor:modelII-quotients}). Suppose that invariant forms
compute the Dolbeault cohomology of $B$ with coefficients in
$\mathcal E_\kappa$, for every sum $\kappa$ of a subfamily of
$(\zeta_1,-\zeta_1,\ldots,\zeta_s,-\zeta_s)$. Then
\[
\dim GH^{n-p}_{\db}(X)=\dim GH^{p-n}_{\db}(X)=N_p+N_{p-1},
\]
and invariant forms also compute the Bott--Chern and Aeppli cohomologies
of $X$.

The last hypothesis depends on the lattice, and not only on $\g_A$ and
$\mathcal J$, in line with what is known for complex structures on some solvmanifolds \cite{Kasuya13}. Indeed, consider $A=\operatorname{diag}(0,\zeta,-\zeta)$ with
$\zeta\ne0$. Let $X$ be the product of a circle of length $y_0$ with a
compact quotient of the three-dimensional solvable group $\mathrm{Sol}$,
endowed with its Model~II structure of type $1$. Invariant forms compute
$GH_{\db}(X)$ if and only if $\zeta y_0\notin2\pi\Z$. In that case
\[
\bigl(\dim GH^2_{\db},\ldots,\dim GH^{-2}_{\db}\bigr)(X)=(1,2,2,2,1).
\]
If instead $\zeta y_0\in2\pi\Z$, these dimensions are $(1,4,6,4,1)$
(Example~\ref{ex:lattice-dependence}). The comparison for Model~I
structures, whose natural fibration has complex fibers over a symplectic
base, is not treated here.

\medskip

   \subsection*{Acknowledgments}
    M. Andrade~P. is supported by FAPESP grant no. 2025/14730-8.

    L.~Camponês do Brasil M. is supported by FAPESP grants no. 2025/23958-2, 2026/09294-7. 
        
    L.~F.~Cavenaghi is supported by the Simons Foundation, grant SFI-MPS-T-Institutes-00007697, and the Ministry of Education and Science of the Republic of Bulgaria, grant DO1-239/10.12.2024.
    
    P.~A.~M.~Martins is supported by FAPESP grants no. 2024/07684-7, 2025/22312-1. 

   The authors are grateful to Lino Grama for suggesting the project and for his guidance and continued collaboration throughout its development.

    The authors thank Giovane Galindo, who kindly helped with fruitful insights during the conception of this manuscript.

The authors thank Beatrice Brienza, whose comments and questions led to Theorem \ref{thm:invariance}.

    Part of this work was conceived while P.~A.~M.~Martins, L.~Camponês do Brasil M., and  M. Andrade~P. were visiting the ICMS-Sofia under L.~Katzarkov's research group ``Theory of Atoms'', for which they thank the hospitality and leadership.
    \vspace{1em}

\section{Spinors and cohomology with canonical coefficients}
\label{sec:spinors}

In this section, we express the invariant generalized Dolbeault complex as a Lie algebra cochain complex. The Lie algebra is the complex eigenspace $\LL$ of the generalized complex structure, and the coefficient space is its canonical line $K$. We need to establish two points: graded differential forms must be identified with alternating $K$-valued maps on $\LL$, and their differential must agree with the differential determined by the bracket on $\LL$ and its action on $K$. 

\subsection{Lie algebra cohomology and its coefficient differential}
\label{sec:Lie-algebra-cohomology}
Let $\mathfrak a$ be a finite-dimensional complex Lie algebra.
A left module over $\mathfrak a$ is a complex vector space $M$
equipped with a bilinear action $(u,m)\mapsto u\cdot m$ satisfying
\[
[u,v]\cdot m=u\cdot(v\cdot m)-v\cdot(u\cdot m), \qquad u,v \in \mathfrak{a}, \, m \in M.
\]
The space of degree-$p$ cochains is
\[
C^p(\mathfrak a,M)
=\operatorname{Hom}_{\C}(\bigwedge^p\mathfrak a,M)
\cong\bigwedge^p\mathfrak a^*\otimes M.
\]
In degree zero, $C^0(\mathfrak a,M)=M$.
The Chevalley--Eilenberg differential
$\d_{\mathfrak a,M}:C^p(\mathfrak a,M)\to C^{p+1}(\mathfrak a,M)$ is
\begin{align}
(\d_{\mathfrak a,M}f)(v_0,\ldots,v_p)
&=\sum_{i=0}^p(-1)^i v_i\cdot
 f(v_0,\ldots,\widehat v_i,\ldots,v_p)\nonumber\\
&\quad+\sum_{0\le i<j\le p}(-1)^{i+j}
 f([v_i,v_j],v_0,\ldots,\widehat v_i,\ldots,
 \widehat v_j,\ldots,v_p).
\label{eq:CE}
\end{align}
In the first two degrees, the formula reads
\begin{equation}\label{eq:CE-low-degrees}
\begin{aligned}
(\d_{\mathfrak a,M}m)(u)&=u\cdot m,\\
(\d_{\mathfrak a,M}f)(u,v)
&=u\cdot f(v)-v\cdot f(u)-f([u,v]).
\end{aligned}
\end{equation}
For example, applying the second formula to the first gives
\[
(\d_{\mathfrak a,M}^2m)(u,v)
=u\cdot(v\cdot m)-v\cdot(u\cdot m)-[u,v]\cdot m=0.
\]
In higher degrees, the same representation identity cancels the
action terms, while the Jacobi identity cancels the remaining
double-bracket terms. Hence $\d_{\mathfrak a,M}^2=0$.
The resulting cohomology is
\[
H^p(\mathfrak a,M)
=\frac{\ker \left( \d_{\mathfrak a,M}:C^p(\mathfrak{a},M)  \to C^{p+1}(\mathfrak{a},M) \right)}
{\im \left(\d_{\mathfrak a,M}:C^{p-1}(\mathfrak{a},M) \to C^p(\mathfrak{a},M) \right)},
\]
where $C^{-1}(\mathfrak{a},M)=0$. In particular,
$H^0(\mathfrak a,M)=\{m\in M:u\cdot m=0\text{ for every }u\in\mathfrak a\}$.

Two coefficient choices will occur throughout the paper.
For the trivial module $M=\C$, the action sum in~\eqref{eq:CE}
vanishes. We write the resulting differential as $\d_{\mathfrak a}$.
It is the degree-one derivation of $\bigwedge^\bullet\mathfrak a^*$
determined by
\begin{equation}\label{eq:CE-trivial}
\d_{\mathfrak a}c=0\quad(c\in\C),\qquad
(\d_{\mathfrak a}\eta)(u,v)=-\eta([u,v])
\quad(\eta\in\mathfrak a^*).
\end{equation}
If $\mathfrak a=\g_\C$ is the complexification of the Lie algebra of
a real Lie group, this is exactly the exterior derivative restricted
to invariant forms: the coefficient functions of an invariant form
are constant, and the usual formula for its exterior derivative
reduces to the bracket sum in~\eqref{eq:CE}.

More generally, let $M$ be one-dimensional and choose a nonzero
vector $\varepsilon\in M$. Its action has the form
$u\cdot\varepsilon=\chi(u)\varepsilon$ for a linear functional
$\chi\in\mathfrak a^*$ satisfying $\chi([\mathfrak a,\mathfrak a])=0$. We call $\chi$ a \emph{character} and denote the module by $\C_\chi$.
Writing a cochain as $\alpha\otimes\varepsilon$, formula~\eqref{eq:CE}
becomes
\begin{equation}\label{eq:twist}
\d_{\mathfrak a,\chi}\alpha
=\d_{\mathfrak a}\alpha+\chi\wedge\alpha.
\end{equation}
Indeed, evaluation of $\chi\wedge\alpha$ is exactly the first sum in~\eqref{eq:CE}.

Next, we apply this construction with $\mathfrak a=\LL$ and $M=K$.
The remaining work is to find the action on $K$ and to prove that
its coefficient differential is the generalized Dolbeault operator.

\subsection{From graded forms to cochains on the eigenspace}

Let $\g$ be a real Lie algebra of dimension $2n$. All its forms
will be complexified, and from now on $\d=\d_{\g_\C}$ denotes the
trivial-coefficient differential just described. Set
\[
E=(\g\oplus\g^*)_\C,\qquad
S=\bigwedge^\bullet\g^*_\C,
\qquad
\langle X+\xi,Y+\eta\rangle
=\tfrac12\bigl(\xi(Y)+\eta(X)\bigr).
\]
The Clifford action of $E$ on $S$ is
\begin{equation}\label{eq:clifford}
c(X+\xi)\alpha=\iota_X\alpha+\xi\wedge\alpha,
\qquad c(u)c(v)+c(v)c(u)=2\langle u,v\rangle\id_S.
\end{equation}
With the convention $(\ad_X^*\eta)(Y)=-\eta([X,Y])$, the invariant
Courant bracket, following  \cite{Barberis}, takes the form
\begin{equation}\label{eq:cotangent-bracket}
\br{X+\xi}{Y+\eta}
=[X,Y]+\ad_X^*\eta-\ad_Y^*\xi.
\end{equation}
It is the semidirect-product Lie bracket on $E$.

Let $\mathcal J$ be a generalized almost complex structure: a real
orthogonal endomorphism of $\g\oplus\g^*$ satisfying
$\mathcal J^2=-\id$. Its $+\ii$ eigenspace $\LL$ and its conjugate
are complementary maximal isotropic subspaces of $E$.
The structure is integrable precisely when
$\br{\LL}{\LL}\subseteq\LL$, in which case $\LL$ is a complex
Lie algebra with this restricted bracket.

The canonical space associated with $\LL$ is
\[
K=\{\alpha\in S:c(v)\alpha=0\text{ for every }v\in\LL\}.
\]
The linear pure-spinor construction yields a nonzero element
$\rho\in K$ \cite{Gualtieri2011}*{Section~1.1}. It has the form
\[
\rho=e^{\beta+\ii\omega}\wedge\Omega,
\qquad \Omega=\theta_1\wedge\cdots\wedge\theta_k,
\qquad
\omega^{n-k}\wedge\Omega\wedge\overline\Omega\ne0,
\]
for real two-forms $\beta,\omega$; the integer $k$ is its type.
The next lemma shows that $K=\C\rho$ and explains why this line
appears as a coefficient space. Starting from $\rho$, Clifford
multiplication by $\overline\LL$ generates every form. The pairing
then identifies each such factor with a covector on $\LL$.

\begin{lemma}\label{lem:grading}
Choose a basis $u_1,\ldots,u_{2n}$ of $\LL$ and the basis
$w^1,\ldots,w^{2n}$ of $\overline\LL$ determined by
$2\langle u_a,w^b\rangle=\delta_a^b$.
For $0\ne\rho\in K$, the vectors
\[
s_I=c(w^{i_1})\cdots c(w^{i_r})\rho,
\qquad I=\{i_1<\cdots<i_r\}\subseteq\{1,\ldots,2n\},
\]
with $s_\varnothing=\rho$, form a basis of $S$, and $K=\C\rho$.
Define
\begin{equation}\label{eq:grading}
\UU^{n-r}=\bigwedge^r\overline\LL\cdot K
\qquad(0\le r\le2n).
\end{equation}
Then
\[
S=\bigoplus_{j=-n}^n\UU^j,\qquad
\dim\UU^j=\binom{2n}{n-j},\qquad
\overline{\UU^j}=\UU^{-j},
\]
and Clifford multiplication satisfies
\[
c(\LL)\UU^j\subseteq\UU^{j+1},\qquad
c(\overline\LL)\UU^j\subseteq\UU^{j-1}.
\]
Moreover, there are natural vector-space isomorphisms
\begin{equation}\label{eq:spinor-cochain-map}
\begin{aligned}
F_r:\UU^{n-r}&\longrightarrow C^r(\LL,K)
=\bigwedge^r\LL^*\otimes K,\\
F_r(\alpha)(v_1,\ldots,v_r)&=c(v_r)\cdots c(v_1)\alpha.
\end{aligned}
\end{equation}
We set $\UU^j=0$ outside $-n\le j\le n$ and extend $F_r$ by zero
outside $0\le r\le2n$.
\end{lemma}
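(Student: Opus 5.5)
The plan is to exploit the Clifford relations (\ref{eq:clifford}) together with the fact that $\LL$ and $\overline\LL$ are complementary maximal isotropic subspaces paired nondegenerately by $2\langle\cdot,\cdot\rangle$. For $a,b\in\{1,\dots,2n\}$ these give
\[
c(w^a)c(w^b)=-c(w^b)c(w^a),\qquad c(u_a)c(u_b)=-c(u_b)c(u_a),\qquad c(u_a)c(w^b)+c(w^b)c(u_a)=\delta_a^b\,\id_S .
\]
So $S$ becomes a module over the Clifford algebra $\mathrm{Cl}(E)\cong\bigwedge\overline\LL\otimes\bigwedge\LL$ (as vector spaces). In this picture $c(u_a)$ acts as a contraction and $c(w^b)$ as a wedge, relative to the "vacuum" $\rho$.

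The first step is linear independence of the $s_I$. Given a relation $\sum_I a_I s_I=0$, pick $I$ of minimal size with $a_I\ne0$. Apply $c(u_{i_r})\cdots c(u_{i_1})$ and commute the $u$'s to the right using the anticommutation relations. Since $c(u)\rho=0$, any $s_J$ with $J\not\supseteq I$ is killed. In particular every $s_J$ with $|J|\ge |I|$ and $J\ne I$ is killed, and the terms with $|J|<|I|$ all have zero coefficient by minimality. The term $s_I$ maps to $\pm\rho\ne0$, so $a_I=0$, a contradiction.

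Since there are $2^{2n}=\dim S$ vectors $s_I$, they form a basis. The same contraction argument proves $K=\C\rho$. Write $\alpha\in K$ as $\sum a_Is_I$ and suppose some $a_I\ne0$ with $I\neq\varnothing$. Choose such an $I$ of maximal size; then contracting by $c(u_{i_r})\cdots c(u_{i_1})$ produces $\pm a_I\rho$ plus terms coming from the $s_J$ with $J\supsetneq I$. But $c(u)\alpha=0$ forces the total to vanish, which gives the contradiction once the bookkeeping is organized. A cleaner route is to contract by a single $u_a$ with $a\in I$: the relations show that $c(u_a)s_J=\pm s_{J\setminus a}$ if $a\in J$, and $c(u_a)s_J=0$ otherwise. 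This gives $\sum_{J\ni a}\pm a_J s_{J\setminus a}=0$, hence $a_J=0$ whenever $a\in J$. Doing this for all $a$ kills every coefficient except $a_\varnothing$.

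The grading statements then follow formally. $\UU^{n-r}$ is the span of the $s_I$ with $|I|=r$, which yields the direct sum decomposition and $\dim\UU^{n-r}=\binom{2n}{r}$. Wedging with $\overline\LL$ raises $|I|$, and by the formula above $c(u_a)$ lowers it. Conjugation requires one extra input: $\bar\rho$ is a pure spinor for $\overline\LL$, and $c(\overline\LL)$ applied $2n$ times to $\rho$ gives $\bigwedge^{2n}\overline\LL\cdot\rho=\UU^{-n}$. This line is annihilated by $\overline\LL$ (since $c(w)^2=0$), so it equals $\C\bar\rho$ by the same uniqueness argument with the roles of $\LL$ and $\overline\LL$ swapped. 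Consequently $\overline{\UU^{n-r}}=\bigwedge^r\LL\cdot\bar\rho=\UU^{n-(2n-r)}=\UU^{r-n}$, again by the contraction formula. I expect this conjugation identity to be the only genuinely non-formal step.

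Finally, consider $F_r(\alpha)(v_1,\dots,v_r)=c(v_r)\cdots c(v_1)\alpha$. It is multilinear, and it is alternating because the $c(u_a)$ anticommute. It lands in $\UU^n=K$ by the degree shift. For injectivity and surjectivity, compute on the basis: the contraction formula gives $F_r(s_I)(u_{j_1},\dots,u_{j_r})=\pm\delta_{IJ}\rho$. So $F_r$ sends the basis $\{s_I\}_{|I|=r}$ to $\pm$ the dual basis of $\bigwedge^r\LL^*\otimes K$, and it is therefore an isomorphism.
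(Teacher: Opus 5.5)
Your overall route is the paper's: the Clifford relations give the contraction formula ($c(u_a)s_J=\pm s_{J\setminus\{a\}}$ for $a\in J$, and $0$ otherwise), the $2^{2n}$ monomials form a basis, and single contractions give $K=\C\rho$. The top monomial spans $\overline K=\C\bar\rho$, which yields $\overline{\UU^{n-r}}=\UU^{r-n}$, and $F_r$ sends $\{s_I\}_{|I|=r}$ to $\pm$ the dual cochain basis.

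However, the linear-independence step, on which everything else rests, is wrong as written. You choose $I$ of \emph{minimal} size and claim that $c(u_{i_r})\cdots c(u_{i_1})$ kills every $s_J$ with $|J|\ge|I|$ and $J\ne I$. That does not follow from the fact that sets with $J\not\supseteq I$ are killed. The sets $J\supsetneq I$ have $|J|>|I|$ and are not killed: the contraction formula sends $s_J$ to $\pm s_{J\setminus I}$. What you actually obtain is
\[
\pm a_I\rho+\sum_{J\supsetneq I}\pm a_J\,s_{J\setminus I}=0 .
\]
To conclude $a_I=0$ from this you would need $\rho\notin\operatorname{span}\{s_K:K\ne\varnothing\}$, which is essentially the independence you are trying to prove.

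The fix is to choose $I$ of \emph{maximal} size among the indices with $a_I\ne0$, as the paper does. Then every other $J$ with $a_J\ne0$ has $|J|\le|I|$ and $J\ne I$, hence $J\not\supseteq I$, so it is killed. Only $\pm a_I\rho$ survives, which forces $a_I=0$.

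The same minimal/maximal confusion appears in your first attempt at $K=\C\rho$. There you do take $I$ maximal, and then there are no surviving terms from $J\supsetneq I$, so that argument closes immediately. Your ``cleaner route'' by single contractions is correct once the $s_I$ are known to be a basis, and it is exactly the paper's argument.

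With this correction, the rest of your proposal goes through and coincides with the paper's proof. This includes the conjugation step and the computation $F_r(s_I)(u_{j_1},\ldots,u_{j_r})=\pm\delta_{IJ}\rho$.
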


\begin{proof}
Put $a_a=c(u_a)$ and $b_a=c(w^a)$. Isotropy and the Clifford
relation imply
\[
a_aa_b+a_ba_a=b_ab_b+b_bb_a=0,
\qquad a_ab_b+b_ba_a=\delta_a^b\id_S.
\]
Moving $a_a$ through a monomial and using $a_a\rho=0$ gives
\begin{equation}\label{eq:contraction}
a_as_I=
\begin{cases}
0,&a\notin I,\\
(-1)^{t-1}s_{I\setminus\{a\}},&a=i_t.
\end{cases}
\end{equation}
Suppose a linear relation among the $s_I$ has a nonzero coefficient
indexed by a set $I=\{i_1<\cdots<i_r\}$ of maximal size.
Applying $a_{i_r}\cdots a_{i_1}$ kills every other term with a
nonzero coefficient and sends $s_I$ to $\rho$, a contradiction.
The $2^{2n}$ monomials therefore form a basis of $S$.
For fixed $a$, distinct monomials containing $a$ are sent by $a_a$
to distinct basis vectors, up to sign. A vector annihilated by all
$a_a$ can consequently have only its $s_\varnothing$ coefficient
nonzero. This proves $K=\C\rho$.
Grouping monomials by their length proves the direct decomposition,
the dimensions, and the two grading shifts.

Conjugation can also be read directly from this basis. The nonzero
top monomial $s_{\rm top}=b_1\cdots b_{2n}\rho$ is annihilated by
all $b_a$. Conjugating the definition of $K$ identifies this common
annihilator with $\overline K=\C\bar\rho$, so $s_{\rm top}$ spans it.
Repeated use of~\eqref{eq:contraction} shows that
$\bigwedge^r\LL\cdot s_{\rm top}$ is spanned by all $s_I$ of
length $2n-r$. Hence
\[
\overline{\UU^{n-r}}
=\bigwedge^r\LL\cdot\overline K
=\UU^{r-n}.
\]

Finally, the values of $F_r$ lie in $\UU^n=K$ by the grading shifts,
and the map is alternating because Clifford operators from $\LL$
anticommute. For $w_1,\ldots,w_r\in\overline\LL$, the relations give
\[
F_r\bigl(c(w_1)\cdots c(w_r)\rho\bigr)(v_1,\ldots,v_r)
=\det\bigl(2\langle v_i,w_j\rangle\bigr)\rho.
\]
This identifies the monomial basis with the usual alternating
cochain basis and proves the last assertion.
\end{proof}

At this point $F_r$ is only a vector-space identification: the
notation $C^r(\LL, K)$ denotes alternating maps, without yet specifying
an action on $K$. The index shift is already explained, however.
The canonical line occupies generalized degree $n$, and each
$\overline\LL$ factor lowers that degree by one while adding one
cochain argument. Thus generalized degree $n-r$ corresponds to
cochain degree $r$. To identify complexes, we must now examine $\d$.

\subsection{Integrability and the two components of the exterior differential}

The differential $\d$ has ordinary form degree one. Its effect on
the generalized grading requires a separate calculation, because a
space $\UU^j$ usually contains forms of several ordinary degrees.
For a basis $X_a$ of $\g$ and dual basis $\xi^a$, one has
\begin{equation}\label{eq:dcubic}
\d=-\sum_{a<b}\xi^a\wedge\xi^b\wedge\iota_{[X_a,X_b]}.
\end{equation}
Both sides are degree-one derivations of the exterior algebra, and
they agree on scalars and one-forms by~\eqref{eq:CE-trivial}, which
proves the formula. Each summand is a product of three Clifford
operators; hence, the derivation can be rewritten in terms of the Clifford action as 
\begin{equation}\label{eq:dec_dif_Clifact}
    \d=-\sum_{a<b} c(\xi^a)c(\xi^b)c([X_a,X_b]).
\end{equation}

Splitting each factor between $\LL$ and $\overline\LL$
and applying Lemma~\ref{lem:grading}, we see that the only possible
generalized degree shifts are $3,1,-1,-3$.

Let $\operatorname{pr}_j:S\to\UU^j$ be the projections of the
direct decomposition. For $\alpha\in\UU^j$, define
\begin{equation}\label{eq:projected-differentials}
\begin{aligned}
\partial\alpha&=\operatorname{pr}_{j+1}(\d\alpha),&
\db\alpha&=\operatorname{pr}_{j-1}(\d\alpha),\\
\delta_+\alpha&=\operatorname{pr}_{j+3}(\d\alpha),&
\delta_-\alpha&=\operatorname{pr}_{j-3}(\d\alpha).
\end{aligned}
\end{equation}
Thus these operators are the components of $\d$ with respect to the
generalized grading, and
$\d=\delta_++\partial+\db+\delta_-$.
Integrability will eliminate $\delta_\pm$. To see how the bracket
controls these components, we first record an operator identity that
will also determine the coefficient action on $K$.

For $v=X+\xi\in E$, define
\begin{equation}\label{eq:Cartan-operator}
D_v=\d c(v)+c(v)\d
=\mathcal L_X+(\d\xi)\wedge,
\qquad \mathcal L_X=\d\iota_X+\iota_X\d.
\end{equation}
For a vector $v=X$, this is Cartan's formula for Lie differentiation
on invariant forms. For a covector $v=\xi$, it is multiplication by
the two-form $\d\xi$: the terms involving $\xi\wedge\d$ cancel by
the Leibniz rule. The operator $D_v$ will express a Lie algebra
action; it is not a cochain differential.

Using ordinary operator commutators $[P,Q]=PQ-QP$, we have
\begin{equation}\label{eq:D-identities}
[D_u,c(v)]=c(\br u v),\qquad [D_u,D_v]=D_{\br u v}.
\end{equation}
Indeed, for $u=X+\xi$ and $v=Y+\eta$, Cartan's identities give
\[
\begin{aligned}
\bigl[D_u,c(v)\bigr]
&=\iota_{[X,Y]}+
 (\mathcal L_X\eta-\iota_Y\d\xi)\wedge\\
&=\iota_{[X,Y]}+
 (\mathcal L_X\eta-\mathcal L_Y\xi)\wedge,
\end{aligned}
\]
where $\iota_Y\d\xi=\mathcal L_Y\xi$ because $\xi(Y)$ is constant
on invariant sections. Since $\mathcal L_X\eta=\ad_X^*\eta$,
this is $c(\br u v)$. Similarly,
\[
\begin{aligned}
\bigl[D_u,D_v\bigr]
&=\mathcal L_{[X,Y]}+
 (\mathcal L_X\d\eta-\mathcal L_Y\d\xi)\wedge\\
&=\mathcal L_{[X,Y]}+
 \d(\mathcal L_X\eta-\mathcal L_Y\xi)\wedge
=D_{\br u v}.
\end{aligned}
\]

\begin{proposition}\label{prop:integrable-differentials}
If $\mathcal J$ is integrable, then $\delta_+=\delta_-=0$. Consequently,
\[
\d=\partial+\db,\qquad
\partial^2=\db^2=0,\qquad
\partial\db+\db\partial=0.
\]
Moreover, a non-degenerate pure spinor $\rho$ defines an integrable
structure if and only if $\d\rho=c(z)\rho$ for some $z\in \overline{\mathfrak{L}}$.
\end{proposition}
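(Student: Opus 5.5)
Since $\d$ is a derivation of odd degree, the whole statement reduces to showing that $\d K\subseteq\UU^{n-1}$, i.e.\ $\delta_+$ and $\delta_-$ vanish on the canonical line; the general case then follows by propagation through the Clifford action.

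\emph{Step 1 (the canonical line).} By the grading of Lemma~\ref{lem:grading}, $\UU^{n+1}=0$, so $\d\rho$ has components only in degrees $n-1$ and $n-3$. I expect this to be the main obstacle, and I plan to handle it with \eqref{eq:D-identities}. Take $u,v\in\LL$. Then
\[
c(u)c(v)\d\rho=c(u)\bigl(D_v\rho-\d c(v)\rho\bigr)=c(u)D_v\rho=D_v c(u)\rho-c(\br vu)\rho .
\]
The first term is zero because $c(u)\rho=0$. The second is zero by integrability, since $\br vu\in\LL$ annihilates $\rho$. Hence $c(u)c(v)\d\rho=0$ for all $u,v\in\LL$. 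By \eqref{eq:contraction}, a component lying in $\UU^{n-3}$, which is spanned by the $s_I$ with $|I|=3$, cannot be killed by all products $a_aa_b$ unless it is zero. Therefore $\d\rho\in\UU^{n-1}=\overline\LL\cdot\rho$, i.e.\ $\d\rho=c(z)\rho$ with $z\in\overline\LL$. This proves the forward direction of the last assertion.

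\emph{Step 2 (general degree).} Write $\alpha\in\UU^j$ as a combination of $c(w_1)\cdots c(w_r)\rho$ with $w_i\in\overline\LL$. The identity $\d c(w)=-c(w)\d+D_w$ moves $\d$ to the right. Each time, it produces either $c(w)$ applied to something of lower length, or $D_w$ applied to a shorter monomial. The term $D_w c(w')\cdots\rho$ expands via $[D_w,c(w')]=c(\br w{w'})$, with $\br w{w'}\in\overline\LL$ because $\overline\LL$ is also closed under the bracket. It ends with $D_w\rho=\d c(w)\rho+c(w)\d\rho$. Inducting on $r$, with Step 1 as the base case, every resulting term is a product of Clifford operators from $\overline\LL$ together with at most one factor from $\LL$, applied to $\rho$. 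Such terms can only shift the degree by $\pm1$. Hence $\delta_\pm=0$.

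\emph{Step 3 (the identities).} Once $\d=\partial+\db$, expand $\d^2=0$ and sort by degree shifts $+2,0,-2$. This gives $\partial^2=\db^2=0$ and $\partial\db+\db\partial=0$.

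\emph{Converse.} Suppose $\d\rho=c(z)\rho$ with $z\in\overline\LL$, and take $u,v\in\LL$. Running the identity of Step 1 backwards gives
\[
c(\br vu)\rho=-c(u)c(v)c(z)\rho .
\]
Anticommuting $c(z)$ past $c(u)c(v)$ and using $c(u)\rho=c(v)\rho=0$ gives
\[
c(u)c(v)c(z)\rho=2\langle u,z\rangle c(v)\rho-2\langle v,z\rangle c(u)\rho+c(z)c(u)c(v)\rho=0 .
\]
Hence $\br vu$ annihilates $\rho$. Since $K=\C\rho$ has maximal isotropic annihilator $\LL$, it follows that $\br vu\in\LL$, which is integrability.
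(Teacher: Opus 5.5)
Your Step~1 and your converse are sound. The identity $c(u)c(v)\d\rho=-c(\br vu)\rho$ holds without integrability, and it is a neat substitute for the paper's triple contraction. The sign slip in your expansion of $c(u)c(v)c(z)\rho$ is harmless, since every term vanishes on $\rho$.

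The gap is in Step~2, already at length $r=1$. For $\alpha=c(w)\rho$ your recipe gives $\d c(w)\rho=-c(w)\d\rho+D_w\rho$. You then evaluate $D_w\rho$ as $\d c(w)\rho+c(w)\d\rho$, and substituting returns the tautology $\d c(w)\rho=\d c(w)\rho$. So nothing is learned about $D_w\rho$ for $w\in\overline\LL$. A priori it lies in $\UU^n\oplus\UU^{n-2}\oplus\UU^{n-4}$. Given Step~1, its $\UU^{n-4}$-component is exactly $\delta_-(c(w)\rho)$, which is what is to be proved. For $r\ge2$ you handle $\d c(w_1)\rho$ by the inductive hypothesis, but that is the unproved case $r=1$. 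Hence the induction has no valid base beyond $r=0$, and the claim that every term has degree shift $\pm1$ is asserted rather than derived.

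The missing input is an independent bound on $D_w\rho$, and it takes one line. For $u\in\LL$, \eqref{eq:D-identities} and $c(u)\rho=0$ give
\[
c(u)D_w\rho=-c(\br wu)\rho\in c(E)\rho=\UU^{n-1}.
\]
So the $\UU^{n-4}$-component of $D_w\rho$ is killed by all of $c(\LL)$. It therefore vanishes by Lemma~\ref{lem:grading}, since the common kernel of $c(\LL)$ is $K=\UU^n$. Once $D_w\rho\in\UU^n\oplus\UU^{n-2}$ is known, your commutator expansion (using $\br w{w'}\in\overline\LL$) shows that $D_w$ shifts degrees only by $0$ or $-2$, and the induction then closes.

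The paper avoids the induction altogether. By \eqref{eq:dec_dif_Clifact}, $\delta_-$ is the sum of the terms whose three Clifford factors all lie in $\overline\LL$. It is therefore Clifford multiplication by a fixed element of $\bigwedge^3\overline\LL$, and such an operator vanishes identically as soon as it kills $\rho$, because the $s_I$ form a basis. So the computation on $\rho$ alone gives $\delta_-=0$ on all of $S$, and $\delta_+=0$ follows by conjugation.
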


\begin{proof}
    The shift $-3$ in~\eqref{eq:dcubic} comes from the Clifford products of three
    factors $\xi^a,\xi^b,[X_a,X_b] \in \overline\LL$ in the expression~(\ref{eq:dec_dif_Clifact}). Let $\alpha \in \mathfrak{U}^{j}$, the degree $-3$ part in $\d \alpha$ is $c(v)c(u)c(w) \d \alpha$ for $v,u,w \in \mathfrak{L}$, disregarding some non-zero scalar. The operator identities just proved imply
    \[
    \begin{aligned}\label{eq:integrable_delta=0}
    c(w)c(v)c(u)\d\alpha
    &=c(w)c(v)D_u\alpha\\
    &=c(w)\bigl(D_uc(v)-c(\br u v)\bigr)\alpha\\
    &=-c(w)c(\br u v)\alpha
    =-2\langle\br u v,w\rangle\alpha.
    \end{aligned}
    \]
    If $\LL$ is involutive, this tensor vanishes by isotropy. Hence
    $\delta_-=0$ on all of $S$. The reality of $\d$ and the conjugation statement in Lemma~\ref{lem:grading} give $\delta_+=0$ as well.
    Now the components of $\d^2$ with shifts $2,0,-2$ vanish separately,
    giving the three asserted differential identities.

    The relation~\ref{eq:integrable_delta=0} holds for $\alpha = \rho \in \mathfrak{U}^{n}$. For integrable $\mathcal J$, the equality
    $\d\rho=\db\rho\in\UU^{n-1}=c(\overline\LL)\rho$
    provides $z$. Conversely, if $\d\rho=c(z)\rho$, its triple
    contraction by elements of $\LL$ is zero. The preceding calculation
    then gives $\langle\br u v,w\rangle=0$ for all $u,v,w\in\LL$.
    Maximal isotropy implies $\br{\LL}{\LL}\subseteq\LL$, proving
    integrability.
\end{proof}

We now assume integrability. The sequence
\[
0\longrightarrow\UU^n\xrightarrow{\db}\UU^{n-1}
\xrightarrow{\db}\cdots\xrightarrow{\db}\UU^{-n}
\longrightarrow0
\]
is a cochain complex when $\UU^{n-p}$ is assigned cochain degree
$p$. Its cohomology, recorded in generalized degree $j$, is
\begin{equation}\label{eq:generalized-Dolbeault-definition}
GH_{\db}^j(\g)=
\frac{\ker(\db:\UU^j\to\UU^{j-1})}
{\im(\db:\UU^{j+1}\to\UU^j)}.
\end{equation}
We have now defined the complex we want to compute. The maps $F_p$
identify its spaces with $C^p(\LL,K)$; the next step identifies
the action on $K$ that makes them maps of complexes.

\subsection{The canonical action and the identification of complexes}

Clifford multiplication by $\LL$ annihilates $K$, so it does not
provide the coefficient action we need. That action is instead
obtained from the operators $D_v$ in~\eqref{eq:Cartan-operator}.
For $v\in\LL$ and $\rho\in K=\UU^n$,
\[
D_v\rho=c(v)\d\rho=c(v)\db\rho\in\UU^n=K.
\]
Thus $D_v$ preserves $K$, and the second identity
in~\eqref{eq:D-identities} says exactly that its restriction is a
representation of $\LL$. Since $K$ is one-dimensional, there is a
character $\chi\in\LL^*$ determined by
\begin{equation}\label{eq:character}
D_v\rho=c(v)\d\rho=\chi(v)\rho\qquad(v\in\LL).
\end{equation}
It is indeed a character, since
\[
\chi(\br u v)\rho=[D_u,D_v]\rho
=(\chi(u)\chi(v)-\chi(v)\chi(u))\rho=0.
\]
Replacing $\rho$ by a nonzero constant multiple does not change
$\chi$. We therefore obtain a well-defined canonical module
$K\cong\C_\chi$.

This construction already explains the coefficient term in degree
zero. By the definition of $F_1$,
\[
F_1(\db\rho)(v)=c(v)\d\rho=\chi(v)\rho,
\]
which is precisely $\d_{\LL,K}\rho$ as given by
\eqref{eq:CE-low-degrees}. The following proposition proves the
same compatibility in every degree.

\begin{proposition}\label{prop:canonical-complex}
With the action $v\cdot s=D_vs$ on $K$, the maps
$F_p$ of~\eqref{eq:spinor-cochain-map} make the following diagram commute
\[
\begin{tikzcd}[row sep=large, column sep=large]
\cdots \arrow[r] & \mathfrak{U}^{n-p+1} \arrow[r, "\overline{\partial}"] \arrow[d, "F_{p-1}"] & \mathfrak{U}^{n-p} \arrow[r, "\overline{\partial}"] \arrow[d, "F_{p}"] & \mathfrak{U}^{n-p-1} \arrow[r] \arrow[d, "F_{p+1}"] & \cdots \\
\cdots \arrow[r] & C^{p-1}(\LL,K)  \arrow[r, "\d_{\LL,K}"] & C^{p}(\LL,K) \arrow[r, "\d_{\LL,K}"] & C^{p+1}(\LL,K) \arrow[r] & \cdots
\end{tikzcd}
\]
in other words,
\begin{equation}\label{eq:chain-identification}
F_{p+1}\db=\d_{\LL,K}F_p.
\end{equation}
After choosing $\rho$ to identify $K$ with $\C_\chi$, the differential
on the right is $\d_{\LL,\chi}=\d_\LL+\chi\wedge$.
\end{proposition}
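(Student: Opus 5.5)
We prove \eqref{eq:chain-identification} by evaluating both sides on arbitrary $v_0,\ldots,v_p\in\LL$ and reducing everything to the operator identities \eqref{eq:D-identities}. Let $\alpha\in\UU^{n-p}$. By definition,
\[
F_{p+1}(\db\alpha)(v_0,\ldots,v_p)=c(v_p)\cdots c(v_0)\,\db\alpha .
\]
By Proposition~\ref{prop:integrable-differentials} we have $\d\alpha=\partial\alpha+\db\alpha$ with $\partial\alpha\in\UU^{n-p+1}$. A product of $p+1$ Clifford operators from $\LL$ raises the generalized degree by $p+1$ (Lemma~\ref{lem:grading}). It therefore sends $\partial\alpha$ into $\UU^{n+2}=0$. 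Hence we may replace $\db\alpha$ by $\d\alpha$ and work with
\[
c(v_p)\cdots c(v_0)\,\d\alpha .
\]

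The key step is to move $\d$ to the left through the Clifford factors. We use $c(v)\d=D_v-\d c(v)$ from \eqref{eq:Cartan-operator} together with $[D_u,c(v)]=c(\br uv)$. The cleanest route is induction on $p$.

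\textbf{Base case.} For $p=0$ we have $c(v_0)\d\alpha=D_{v_0}\alpha$, which is exactly $v_0\cdot F_0(\alpha)$. This case was already checked in the text.

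\textbf{Inductive step.} Write
\[
c(v_p)\cdots c(v_0)\d\alpha=c(v_p)\cdots c(v_1)\bigl(D_{v_0}\alpha-\d\, c(v_0)\alpha\bigr).
\]
The element $\beta=c(v_0)\alpha$ lies in $\UU^{n-p+1}$. By induction, the term $-c(v_p)\cdots c(v_1)\d\beta$ equals $-(\d_{\LL,K}F_{p-1}\beta)(v_1,\ldots,v_p)$. Expanding this via \eqref{eq:CE} gives action terms and bracket terms evaluated on the argument list $(v_0,\ldots)$ with $v_0$ inserted last. Here $F_{p-1}(c(v_0)\alpha)(v_1,\ldots,v_{p-1})=c(v_{p-1})\cdots c(v_1)c(v_0)\alpha=F_p\alpha(v_0,\ldots,v_{p-1})$, consistent with the reversed-order convention.

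For the term $c(v_p)\cdots c(v_1)D_{v_0}\alpha$, we commute $D_{v_0}$ leftward past each $c(v_i)$. Each step produces $c(\br{v_0}{v_i})$ in the $i$-th slot, giving the bracket terms $f([v_0,v_i],\ldots)$. At the far left, $D_{v_0}$ acts on $c(v_p)\cdots c(v_1)\alpha\in\UU^n=K$, which is precisely the action term $v_0\cdot f(v_1,\ldots,v_p)$.

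Collecting these contributions with the ones from the inductive hypothesis should reproduce \eqref{eq:CE} term by term. The main obstacle is the sign bookkeeping: the convention $F_r(\alpha)(v_1,\ldots,v_r)=c(v_r)\cdots c(v_1)\alpha$ reverses the order, and we must check that the commutation signs produce $(-1)^i$ and $(-1)^{i+j}$. I would first verify the case $p=1$ explicitly against \eqref{eq:CE-low-degrees}, and use it to fix the signs before running the general induction. As an alternative route, note that both sides are determined by the values $F_{p+1}\db$ on the monomial basis $s_I$, and one could check it there using \eqref{eq:contraction}.

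Finally, once $K$ is identified with $\C_\chi$ via $\rho$, formula \eqref{eq:twist} shows that $\d_{\LL,K}$ becomes $\d_\LL+\chi\wedge$, with $\chi$ as in \eqref{eq:character}.
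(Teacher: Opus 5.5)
Your proposal is correct and is essentially the paper's argument. The paper likewise kills $\partial\alpha$ by the grading and substitutes $c(v_i)\d=D_{v_i}-\d c(v_i)$, but for all $i$ at once, where your induction peels off one factor at a time. It then moves each $D_{v_i}$ to the left with $[D_u,c(v)]=c(\br uv)$, carrying the resulting factor $c(\br{v_i}{v_j})$ across $j-1$ anticommuting factors from $\LL$.

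The signs you left open do work out. The $i=0$ bracket terms come out as $(-1)^jf(\br{v_0}{v_j},v_1,\ldots,\widehat v_j,\ldots,v_p)$. For the rest, set $f=F_p(\alpha)$ and $g=F_{p-1}(c(v_0)\alpha)$, so that $g(u_1,\ldots,u_{p-1})=f(v_0,u_1,\ldots,u_{p-1})$. Expanding $-(\d_{\LL,K}g)(v_1,\ldots,v_p)$ by \eqref{eq:CE} gives exactly the $i\ge1$ action terms. After antisymmetry moves $\br{v_i}{v_j}$ in front of $v_0$, it also gives the $1\le i<j$ bracket terms.
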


\begin{proof}
Take $\alpha\in\UU^{n-p}$ and $v_0,\ldots,v_p\in\LL$, and
write $c_i=c(v_i)$ and $D_i=D_{v_i}$.
The grading gives
$c_p\cdots c_0\alpha=c_p\cdots c_0\partial\alpha=0$.
Therefore
\[
F_{p+1}(\db\alpha)(v_0,\ldots,v_p)
=c_p\cdots c_0\d\alpha.
\]
Successively substitute $c_i\d=D_i-\d c_i$ to obtain
\[
c_p\cdots c_0\d\alpha
=\sum_{i=0}^p(-1)^i
c_p\cdots c_{i+1}D_i c_{i-1}\cdots c_0\alpha;
\]
the last term, $(-1)^{p+1}\d(c_p\cdots c_0\alpha)$, is zero.
Move $D_i$ to the far left using
$c_jD_i=D_ic_j-c(\br{v_i}{v_j})$ for $j>i$.
The term in which $D_i$ reaches the left acts on a value of $F_p$
in $K$, so it gives the $i$th coefficient-action term of
\eqref{eq:CE}. A bracket contribution for $i<j$ initially has
sign $(-1)^{i+1}$. To place that bracket in the first cochain
argument, move its Clifford factor to the far right, across
$j-1$ other factors. Because all these factors belong to the
isotropic space $\LL$, each interchange contributes only a minus
sign. The resulting sign is $(-1)^{i+j}$.
Writing $\widetilde\alpha=F_p(\alpha)$, we obtain
\begin{align*}
F_{p+1}(\db\alpha)(v_0,\ldots,v_p)
&=\sum_{i=0}^p(-1)^i v_i\cdot
\widetilde\alpha(v_0,\ldots,\widehat v_i,\ldots,v_p)\\
&\quad+\sum_{i<j}(-1)^{i+j}
\widetilde\alpha(\br{v_i}{v_j},v_0,\ldots,
\widehat v_i,\ldots,\widehat v_j,\ldots,v_p).
\end{align*}
This is the coefficient differential~\eqref{eq:CE}, proving
\eqref{eq:chain-identification}.
\end{proof}

\begin{corollary}
    Set $\bar\chi(\bar v)=\overline{\chi(v)}$ and let $GH_\partial$ denote the cohomology of $\partial$
   \begin{equation}\label{eq:ACK}
    GH_{\db}^{n-p}(\g)\cong H^p(\LL,\C_\chi),\qquad
    GH_{\partial}^{-n+p}(\g)
    \cong H^p(\overline\LL,\C_{\bar\chi}),
    \end{equation}
    If $\d\rho=c(z)\rho$, then
    \begin{equation}\label{eq:chi}
    \chi(v)=2\langle z,v\rangle\qquad(v\in\LL).
    \end{equation}
    In particular, $\chi=0$ if and only if the invariant generator
    $\rho$ is closed. 
\end{corollary}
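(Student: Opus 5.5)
The proof has three parts: the $\db$ isomorphism, its conjugate for $\partial$, and the formula for $\chi$.

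\textbf{The $\db$ isomorphism.} This is immediate from Proposition~\ref{prop:canonical-complex}. The maps $F_p$ are vector-space isomorphisms by Lemma~\ref{lem:grading}, and they intertwine $\db$ with $\d_{\LL,K}$ by \eqref{eq:chain-identification}. They therefore form an isomorphism of cochain complexes, once $\UU^{n-p}$ is placed in cochain degree $p$, and so induce isomorphisms on cohomology. Choosing the generator $\rho$ identifies $K$ with $\C_\chi$. This gives $GH_{\db}^{n-p}(\g)\cong H^p(\LL,\C_\chi)$.

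\textbf{The $\partial$ isomorphism.} I will reduce it to the previous case by complex conjugation rather than redo the computation. Since $\d$ is real and $\overline{\UU^j}=\UU^{-j}$ by Lemma~\ref{lem:grading}, conjugation maps $\UU^j$ antilinearly onto $\UU^{-j}$. It also interchanges the components of $\d$, so $\overline{\db\alpha}=\partial\bar\alpha$. Hence conjugation is an antilinear isomorphism from the $\db$-complex $(\UU^{n-p},\db)$ onto the $\partial$-complex $(\UU^{-n+p},\partial)$.

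To identify the target directly with $H^p(\overline\LL,\C_{\bar\chi})$, I redo Lemma~\ref{lem:grading} and Proposition~\ref{prop:canonical-complex} with the roles of $\LL$ and $\overline\LL$ exchanged. This means using the generator $\bar\rho$ of $\overline K$ and the conjugate generalized complex structure $-\mathcal J$, whose $+\ii$-eigenspace is $\overline\LL$, whose grading is reversed, and whose $\db$-operator is our $\partial$. The character of this structure is computed by $c(\bar v)\d\bar\rho=\overline{c(v)\d\rho}=\overline{\chi(v)}\,\bar\rho$, which is $\bar\chi$. The only care needed here is the bookkeeping: under this exchange the generalized degree $j$ for $\mathcal J$ becomes $-j$, which turns $n-p$ into $-n+p$.

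\textbf{The formula for $\chi$.} Suppose $\d\rho=c(z)\rho$ with $z\in\overline\LL$. For $v\in\LL$, the Clifford relation \eqref{eq:clifford} together with $c(v)\rho=0$ gives
\[
\chi(v)\rho=c(v)c(z)\rho=\bigl(2\langle v,z\rangle-c(z)c(v)\bigr)\rho=2\langle z,v\rangle\rho,
\]
which is \eqref{eq:chi}.

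\textbf{The vanishing criterion.} Since $\d\rho=\db\rho\in\UU^{n-1}=c(\overline\LL)\rho$, and the pairing $\LL\times\overline\LL\to\C$ is nondegenerate, $\chi=0$ forces $z=0$. Then $\d\rho=0$. Conversely, if $\d\rho=0$ then $\chi=0$ directly from \eqref{eq:character}. To justify $z=0$ from $c(z)\rho=0$, I use that $c(\cdot)\rho$ is injective on $\overline\LL$, because the elements $s_{\{a\}}$ are linearly independent by Lemma~\ref{lem:grading}.

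I expect the conjugate statement to be the only step requiring real attention, specifically checking the sign and degree conventions under the exchange of $\LL$ and $\overline\LL$.
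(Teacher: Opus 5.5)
Your proposal is correct and follows essentially the same route as the paper. Like the paper, you obtain the first isomorphism from Proposition~\ref{prop:canonical-complex}, the second by conjugation, and the character formula by the same Clifford computation. Your conjugation step is more explicit: you apply the proposition to $-\mathcal J$, with eigenspace $\overline\LL$, canonical line $\C\bar\rho$ and $\db$-operator $\partial$. For the vanishing criterion, the paper uses $F_1(\d\rho)=\chi\otimes\rho$ rather than the nondegeneracy of $\LL\times\overline\LL$, but these are the same fact.

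One small point: your computation of $\chi(v)$ never uses $z\in\overline\LL$, so you should state it for arbitrary $z\in E$. The paper notes that $z$ is only determined modulo $\Ann(\rho)=\LL$, and it later applies the formula with $z=-\Lambda e^0\notin\overline\LL$. Your closing remark about injectivity of $c(\cdot)\rho$ on $\overline\LL$ is not needed.
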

\begin{proof}
    The first isomorphism in cohomology follows from the isomorphism between the complexes in the proposition above. Since $\d$ is real and conjugation reverses the grading, it exchanges $\partial$ and $\db$. Conjugating the construction gives the second isomorphism in~\eqref{eq:ACK}.

    For the character formula, $c(v)\rho=0$ and the Clifford relation give
    \[
    \chi(v)\rho=c(v)c(z)\rho
    =2\langle v,z\rangle\rho-c(z)c(v)\rho
    =2\langle v,z\rangle\rho.
    \]
    This value is independent of the choice of $z$: any two choices
    differ by an element of the Clifford annihilator $\Ann(\rho)=\{u\in E:c(u)\rho=0\}=\LL$, which pairs trivially
    with $\LL$.
    Finally, $F_1(\d\rho)=\chi\otimes\rho$, and $F_1$ is an
    isomorphism. Thus $\chi=0$ is equivalent to $\d\rho=0$.
\end{proof}

\subsection{Invariant cohomology and the complex on a compact quotient}

So far, our cohomology groups have been defined using invariant
forms. We will eventually compare them with the cohomology of all
smooth forms on a compact quotient. To formulate that comparison,
we first record the other cohomologies associated with
$(\UU^\bullet,\partial,\db)$, and then describe explicitly what
changes in the cochain complex when coefficients are allowed to
be smooth functions.

Besides $GH_{\db}^j(\g)$, we consider
\begin{align*}
GH_\partial^j(\g)
&=\frac{\ker(\partial:\UU^j\to\UU^{j+1})}
{\im(\partial:\UU^{j-1}\to\UU^j)},\\
GH_{BC}^j(\g)
&=\frac{\ker(\partial|_{\UU^j})\cap\ker(\db|_{\UU^j})}
{\partial\db(\UU^j)},\\
GH_A^j(\g)
&=\frac{\ker(\partial\db:\UU^j\to\UU^j)}
{\partial(\UU^{j-1})+\db(\UU^{j+1})}.
\end{align*}
Bott--Chern classes are represented by forms closed under both
operators, modulo forms $\partial\db\beta$.
Aeppli classes are represented by forms annihilated by
$\partial\db$, modulo sums of a $\partial$-exact form and a
$\db$-exact form. 

Now let $X=\Gamma\backslash G$ be a compact quotient endowed with
the invariant generalized complex structure under consideration,
and let $\g$ be the Lie algebra of $G$.
The spaces $\LL$, $K$, and $\UU^j$ extend to invariant subbundles
of the complexified generalized tangent bundle and exterior-form
bundle on $X$. Denote the first two by $L_X$ and $K_X$, and write
$U^j(X)$ for the smooth sections of the $j$th graded subbundle.
Replacing $\UU^j$ by $U^j(X)$ in the preceding definitions gives
$GH_\partial^j(X)$, $GH_{BC}^j(X)$, and $GH_A^j(X)$, as well as
$GH_{\db}^j(X)$. The inclusion of invariant forms commutes with
both differentials and therefore induces comparison maps
\[
GH_\star^j(\g)\longrightarrow GH_\star^j(X),
\qquad \star\in\{\partial,\db,BC,A\}.
\]
The question is whether every cohomology class on the right can
be represented invariantly and whether an invariant class that
vanishes there already vanishes in the invariant complex.

We can describe the global Dolbeault complex using the Lie algebra
cohomology construction introduced above. Choose an invariant
generator $\rho$ of the canonical line. It descends to a nowhere
vanishing section of $K_X$, so every smooth section of $K_X$ is
uniquely $f\rho$ for some $f\in C^\infty(X,\C)$. Set
\[
\mathcal M_\rho=C^\infty(X,\C)\rho.
\]
This is now an infinite-dimensional coefficient space. For
$v=Y+\xi\in\LL$, let $v^L$ be the corresponding invariant section
of $L_X$ and let $Y^L$ be its vector part. Define
\begin{equation}\label{eq:global-canonical-module}
v\cdot(f\rho)=\bigl(Y^L(f)+f\chi(v) \bigr)\rho.
\end{equation}
The term $Y^L(f)$ differentiates the function multiplying $\rho$;
the term $f\chi(v)$ is the previously established action on the
canonical line. For constant $f$, this is exactly the module
$K\cong\C_\chi$ used in the invariant calculation.

Equation~\eqref{eq:global-canonical-module} defines a representation
of $\LL$. Indeed, the character values are constant,
$\chi(\br u v)=0$, and vector projection preserves brackets.
Writing $Y_u^L, Y_v^L$ for the vector parts, we therefore have
\[
u\cdot\bigl(v\cdot(f\rho)\bigr)
-v\cdot\bigl(u\cdot(f\rho)\bigr)
=[Y_u^L,Y_v^L](f)\rho
=\br u v\cdot(f\rho).
\]
We can consequently form the Chevalley--Eilenberg complex
$C^\bullet(\LL,\mathcal M_\rho)$.
Since $\LL$ is finite-dimensional, its cochains identify with
\[
C^p(\LL,\mathcal M_\rho)
\cong C^\infty\bigl(X,\bigwedge^p\LL^*\bigr)\rho
\cong\Gamma\bigl(X,\bigwedge^pL_X^*\otimes K_X\bigr).
\]
Thus a cochain can be written as $\eta\rho$, where evaluating
$\eta$ on fixed $v_1,\ldots,v_p\in\LL$ produces a smooth function
on $X$.

In this notation, the coefficient differential is completely
explicit. Write
$\d_{\LL,\mathcal M_\rho}(\eta\rho)=(\d_X\eta)\rho$.
Formula~\eqref{eq:CE} and~\eqref{eq:global-canonical-module} give
\begin{align}
(\d_X\eta)(v_0,\ldots,v_p)
&=\sum_{i=0}^p(-1)^i
\Bigl(
Y_i^L\bigl(\eta(v_0,\ldots,\widehat v_i,\ldots,v_p)\bigr)
\nonumber\\[-2pt]
&\hspace{43mm}
+\chi(v_i)\eta(v_0,\ldots,\widehat v_i,\ldots,v_p)
\Bigr)\nonumber\\
&\quad+\sum_{i<j}(-1)^{i+j}
\eta(\br{v_i}{v_j},v_0,\ldots,\widehat v_i,\ldots,
\widehat v_j,\ldots,v_p),
\label{eq:global-cochain-differential}
\end{align}
where $Y_i^L$ is the vector part of $v_i^L$.
The last sum uses the bracket, just as in the invariant complex.
The terms containing $\chi$ retain the canonical coefficient
module. The derivatives $Y_i^L$ account for the variation of the
cochain over $X$. For example, in degree zero this formula reduces to
\[
(\d_X f)(v)=Y^L(f)+\chi(v)f.
\]
When all coefficients of $\eta$ are constant, the derivatives
vanish and~\eqref{eq:global-cochain-differential} reduces to
$\d_{\LL,\chi}$. The invariant inclusion is therefore the
inclusion of constant-coefficient cochains into this complex.

For compatibility with the bundle spectral sequence used later,
we also describe this same differential intrinsically.
The bundle $L_X$ is a Lie algebroid: its sections have the
restricted Courant bracket, and its vector projection
\[
a:L_X\longrightarrow T_\C X,\qquad a(Y+\xi)=Y,
\]
satisfies
\[
\br u{fv}=f\br u v+a(u)(f)v,
\qquad a(\br u v)=[a(u),a(v)].
\]
The map $a$ is called the anchor. Concretely, $a(v)$ is the vector
field that differentiates coefficient functions when the section
$v$ occurs in the differential.

The canonical line has an $L_X$-connection given by
\[
\nabla_v^K s=c(v)\d s,
\qquad v\in\Gamma(L_X),\quad s\in\Gamma(K_X).
\]
Its values lie in $K_X$: integrability gives
$\d s\in U^{n-1}(X)$, and Clifford multiplication by $L_X$ raises
the generalized degree to $n$.
By an $L_X$-connection we mean that the operator is linear over
smooth functions in $v$ and obeys
\[
\nabla_v^K(fs)=a(v)(f)s+f\nabla_v^K s.
\]
For the invariant frame, expansion of $\d(f\rho)$ gives
\[
\begin{aligned}
\nabla_{v^L}^K(f\rho)
&=c(v^L)(\d f\wedge\rho+f\d\rho)\\
&=\bigl(Y^L(f)+\chi(v)f\bigr)\rho,
\end{aligned}
\]
using $c(v^L)\rho=0$. This recovers precisely
\eqref{eq:global-canonical-module}. The representation identity
proved there implies flatness,
$[\nabla_u^K,\nabla_v^K]=\nabla_{\br u v}^K$:
it suffices to check this in the invariant frame, because curvature
is linear over smooth functions in both arguments.
The differential on bundle-valued cochains is formula~\eqref{eq:CE}
with $v_i\cdot$ interpreted as $\nabla_{v_i}^K$.
Its expression in the invariant frame is
\eqref{eq:global-cochain-differential}. Its cohomology is the
Lie algebroid cohomology $H^p(L_X, K_X)$.

Finally, the maps $F_p$ extend to smooth forms by
\[
\widetilde F_p(\alpha)(v_1,\ldots,v_p)
=c(v_p^L)\cdots c(v_1^L)\alpha,
\qquad \alpha\in U^{n-p}(X).
\]
They identify the generalized Dolbeault differential with the
coefficient differential just described
\cite{Gualtieri2011}*{Section~3.2}.
To see explicitly what the extension adds, take an invariant
$\alpha\in\UU^{n-p}$ and $f\in C^\infty(X,\C)$.
The product rule for $\d(f\alpha)$ and the Clifford relation give
\[
\widetilde F_{p+1}\bigl(\db(f\alpha)\bigr)
=f\,\d_{\LL,K}F_p(\alpha)
+(a^*\d f)\wedge F_p(\alpha), 
\]
where $(a^*\d f)(v)=Y^L(f)$ for $v=Y+\xi\in\LL$.
The first summand is the invariant calculation already proved;
the second gives exactly the derivatives in
\eqref{eq:global-cochain-differential}. Since every smooth form is
a finite sum of smooth functions times invariant forms, this
verifies the identification in every degree. Consequently,
\begin{equation}\label{eq:global-ACK}
GH_{\db}^{n-p}(X)
\cong H^p(\LL,\mathcal M_\rho)
\cong H^p(L_X,K_X).
\end{equation}

Thus the invariant and global calculations use the same bracket
and canonical character. Their coefficient spaces are respectively
$\C\rho$ and $C^\infty(X,\C)\rho$, and the latter carries the
additional differentiation shown in
\eqref{eq:global-canonical-module}. Determining when the inclusion
between these two complexes induces a cohomology isomorphism is
the comparison problem addressed in Section~\ref{sec:quotients}.

\vspace{1em}

\section{The almost abelian cochain complex}\label{sec:algebra}

In this section, $\g=\g_A=\R e_0\ltimes_A\h$ is an almost abelian Lie
algebra: $\dim\h=2n-1$, $[e_0,Y]=AY$ for $Y\in\h$, and $[\h,\h]=0$.
Let $e^0(e_0)=1$ and $e^0|_\h=0$. We show that the eigenspace $\LL$ of
every invariant generalized complex structure on $\g_A$ is again almost
abelian, and that its canonical character is a multiple of a single
homomorphism $\phi$. The complex $C^\bullet(\LL,\C_\chi)$ of
Proposition~\ref{prop:canonical-complex} then splits into two-term
complexes.

Choose a basis $e_1,\ldots,e_{2n-1}$ of $\h$ and complete $e^0$ to the
dual basis $e^0,\ldots,e^{2n-1}$. The only nonzero brackets of basis
vectors are $[e_0,e_j]=Ae_j$, so~\eqref{eq:dcubic} becomes
\begin{equation}\label{eq:dgA}
\d=-e^0\wedge\sum_{j=1}^{2n-1}e^j\wedge\iota_{Ae_j}.
\end{equation}
In particular, $\d S\subseteq e^0\wedge S$.

\

\begin{proposition}\label{thm:L-almost-abelian}
Let $\mathcal J$ be an invariant generalized complex structure on
$\g_A$, with eigenspace $\LL$ and canonical character $\chi$. Then
the map
\[
\phi:\LL\longrightarrow\C,\qquad \phi(X+\xi)=e^0(X),
\]
is a surjective Lie algebra homomorphism with abelian kernel
$\mathfrak n$. Hence, for any $Z\in\LL$ with $\phi(Z)=1$,
\[
\LL=\C Z\ltimes_C\mathfrak n,\qquad
C=\ad_Z|_{\mathfrak n}:\mathfrak n\to\mathfrak n,\qquad
\dim_\C\mathfrak n=2n-1.
\]
The endomorphism $C$ does not depend on the choice of $Z$.
Moreover, $\chi|_{\mathfrak n}=0$. Consequently,
\begin{equation}\label{eq:chi-phi}
\chi=\tau\phi,\qquad \tau=\chi(Z),
\end{equation}
and $\tau$ does not depend on the choice of $Z$.
\end{proposition}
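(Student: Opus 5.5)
The plan is to work first on all of $E=(\g_A\oplus\g_A^*)_\C$ and then restrict to $\LL$. Define $\Phi:E\to\C$ by $\Phi(X+\xi)=e^0(X)$. From \eqref{eq:cotangent-bracket}, the vector part of $\br{X+\xi}{Y+\eta}$ is $[X,Y]$. Since $[\g_A,\g_A]\subseteq\h=\ker e^0$, this gives $\Phi(\br uv)=0$ for all $u,v\in E$. Hence $\Phi$, and therefore $\phi=\Phi|_\LL$, vanishes on brackets and is a Lie algebra homomorphism to the abelian algebra $\C$.

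\textbf{Surjectivity.} Suppose $\phi=0$. Then $\LL\subseteq\h_\C\oplus\g_\C^*$, and since $\Phi$ is real, also $\overline\LL\subseteq\h_\C\oplus\g_\C^*$. This contradicts $\LL\oplus\overline\LL=E$, so $\phi$ is onto $\C$ and $\dim_\C\mathfrak n=2n-1$.

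\textbf{$\mathfrak n$ is abelian.} This is the step needing care. Take $u=X+\xi$ and $v=Y+\eta$ in $\mathfrak n$, so $X,Y\in\h_\C$. Then $[X,Y]=0$, and $\br uv=\ad_X^*\eta-\ad_Y^*\xi$ is a pure covector. For $X\in\h$ we have $(\ad_X^*\eta)(W)=-\eta([X,W])$. This vanishes for $W\in\h$, and for $W=e_0$ it equals $\eta(AX)$. Hence
\[
\br uv=\bigl(\eta(AX)-\xi(AY)\bigr)e^0 .
\]
Integrability puts $\br uv$ in $\LL$, so $c\,e^0\in\LL$ with $c=\eta(AX)-\xi(AY)$.

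Suppose $c\neq0$. Then $e^0\in\LL$, and since $e^0$ is real, also $e^0\in\overline\LL$. This contradicts $\LL\cap\overline\LL=0$. So $c=0$ and $\mathfrak n$ is abelian.

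\textbf{Semidirect product and independence of $C$.} With $\mathfrak n=\ker\phi$ an abelian ideal of codimension one and $Z\in\LL$ with $\phi(Z)=1$, we get $\LL=\C Z\oplus\mathfrak n$ and $\ad_Z$ preserves $\mathfrak n$. Any other choice is $Z'=Z+m$ with $m\in\mathfrak n$. Since $\mathfrak n$ is abelian, $\ad_{Z'}|_{\mathfrak n}=\ad_Z|_{\mathfrak n}$, so $C$ does not depend on the choice of $Z$.

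\textbf{The character.} I would show $\chi|_{\mathfrak n}=0$ directly from \eqref{eq:character}. By \eqref{eq:dgA}, $\d\rho=e^0\wedge\gamma$ for some form $\gamma$. For $v=X+\xi\in\mathfrak n$,
\[
\chi(v)\rho=c(v)\d\rho=\iota_X(e^0\wedge\gamma)+\xi\wedge e^0\wedge\gamma=e^0\wedge\bigl(-\iota_X\gamma-\xi\wedge\gamma\bigr),
\]
using $e^0(X)=0$. So $\chi(v)\rho\in e^0\wedge S$.

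It remains to see that $\rho\notin e^0\wedge S$; then $\chi(v)=0$. If $\rho=e^0\wedge\beta$, then $c(e^0)\rho=0$, so $e^0\in\Ann(\rho)=\LL$. That is the same contradiction as before.

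Therefore $\chi$ vanishes on the hyperplane $\mathfrak n$, and $\chi=\chi(Z)\phi$. The value $\tau=\chi(Z+m)=\chi(Z)$ does not depend on $Z$.

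The main obstacle is showing $\br{\mathfrak n}{\mathfrak n}=0$. The key observation is that this bracket is always a multiple of the real covector $e^0$, which cannot lie in $\LL$.
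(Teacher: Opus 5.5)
Your proposal is correct and follows the paper's proof essentially step by step: the homomorphism and surjectivity arguments, the formula $\br uv=\bigl(\eta(AX)-\xi(AY)\bigr)e^0$ on $\mathfrak n$, the independence of $C$ and $\tau$ from $Z$, and the proof of $\chi|_{\mathfrak n}=0$ via $\d\rho\in e^0\wedge S$ and $e^0\notin\LL$ all match. The one difference is how you show $\mathfrak n$ is abelian. The paper pairs $c\,e^0\in\LL$ with $Z$ and uses the isotropy of $\LL$, whereas you reuse the reality of $e^0$ together with $\LL\cap\overline\LL=0$; this is equally valid and does not need $Z$.
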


\begin{proof}
The vector part of $\br uv$ lies in $[\g_A,\g_A]\subseteq\h_\C$
by~\eqref{eq:cotangent-bracket}. Hence $\phi(\br uv)=0$, and $\phi$ is a
homomorphism to the abelian Lie algebra $\C$. If $\phi=0$, the vector
projections of both $\LL$ and $\overline\LL$ would lie in $\h_\C$. This
contradicts $E=\LL\oplus\overline\LL$. Thus $\phi$ is surjective, and
$\mathfrak n=\ker\phi$ is an ideal of dimension $2n-1$.

Take $X+\xi,\,Y+\eta\in\mathfrak n$, so that $X,Y\in\h_\C$. Then
$[X,Y]=0$. For $W\in\h$ the convention
$(\ad_X^*\eta)(W)=-\eta([X,W])$ gives zero, while for $W=e_0$ it gives
$\eta(AX)$. Therefore
\[
\br{X+\xi}{Y+\eta}=\bigl(\eta(AX)-\xi(AY)\bigr)e^0 .
\]
By integrability, this element lies in $\LL$. Its pairing with $Z$ is
$\tfrac12\bigl(\eta(AX)-\xi(AY)\bigr)\phi(Z)$. Isotropy of $\LL$ forces
this pairing, and hence the bracket, to vanish. Thus $\mathfrak n$ is
abelian, and the semidirect-product decomposition follows. If
$\phi(Z')=1$, then $Z'-Z\in\mathfrak n$. Hence
$\ad_{Z'}|_{\mathfrak n}=\ad_Z|_{\mathfrak n}$.

For the character, fix $0\ne\rho\in K$. By~\eqref{eq:dgA} we can write
$\d\rho=e^0\wedge\beta$ for some $\beta\in S$. Let
$v=X+\xi\in\mathfrak n$, so that $e^0(X)=0$. Then
\[
c(v)(e^0\wedge\beta)
=\iota_X(e^0\wedge\beta)+\xi\wedge e^0\wedge\beta
=-e^0\wedge c(v)\beta .
\]
By~\eqref{eq:character}, $\chi(v)\rho=c(v)\d\rho\in e^0\wedge S$.
Suppose $\chi(v)\ne0$. Then $\rho\in e^0\wedge S$, so
$c(e^0)\rho=e^0\wedge\rho=0$, and $e^0\in\Ann(\rho)=\LL$. Since $e^0$ is
real, it would also lie in $\overline\LL$. This is impossible because
$\LL\cap\overline\LL=0$. Hence $\chi|_{\mathfrak n}=0$. The linear forms
$\chi$ and $\chi(Z)\phi$ agree on $Z$ and on $\mathfrak n$, which
gives~\eqref{eq:chi-phi}. Finally, $\chi(Z')=\chi(Z)$ whenever
$Z'-Z\in\mathfrak n$.
\end{proof}

\

From now on, fix $Z\in\LL$ with $\phi(Z)=1$. The dual space splits as
\[
\LL^*=\C\phi\oplus\mathfrak n^*,\qquad
\mathfrak n^*:=\{\xi\in\LL^*:\xi(Z)=0\}.
\]
Restriction to $\mathfrak n$ identifies $\mathfrak n^*$ with the dual of
$\mathfrak n$. Accordingly, for every $p$,
\begin{equation}\label{eq:cochain-splitting}
\bigwedge^p\LL^*
=\bigwedge^p\mathfrak n^*\oplus
\phi\wedge\bigwedge^{p-1}\mathfrak n^*,
\end{equation}
with $\bigwedge^p\mathfrak n^*=0$ outside $0\le p\le2n-1$.

\begin{definition}\label{def:PsiC}
Let $\Psi_C$ be the degree-zero derivation of $\bigwedge^\bullet\LL^*$
determined by
\[
\Psi_C(1)=0,\qquad
\Psi_C\xi=\xi\circ\ad_Z\quad(\xi\in\LL^*).
\]
With the coadjoint convention of~\eqref{eq:cotangent-bracket}, this
means $\Psi_C\xi=-\ad_Z^*\xi$ on $\LL^*$.
\end{definition}

The definition has two immediate consequences. First,
$\Psi_C\phi=\phi\circ\ad_Z=0$, because $\phi$ vanishes on brackets.
Second, for $\eta\in\mathfrak n^*$ one has $(\Psi_C\eta)(Z)=0$ and
$(\Psi_C\eta)|_{\mathfrak n}=\eta\circ C$. Thus $\Psi_C$ preserves
$\bigwedge^\bullet\mathfrak n^*$. On this subalgebra it is the derivation
extending $C^T\eta=\eta\circ C$:
\begin{equation}\label{eq:Psi-expanded}
\Psi_C(\eta_1\wedge\cdots\wedge\eta_p)
=\sum_{j=1}^p\eta_1\wedge\cdots\wedge C^T\eta_j
\wedge\cdots\wedge\eta_p
\qquad(\eta_j\in\mathfrak n^*).
\end{equation}
It also satisfies $\Psi_C(\phi\wedge\beta)=\phi\wedge\Psi_C\beta$.

\begin{lemma}
For every $\alpha\in\bigwedge^\bullet\LL^*$,
\begin{equation}\label{eq:almost-d}
\d_\LL\alpha=-\phi\wedge\Psi_C\alpha.
\end{equation}
\end{lemma}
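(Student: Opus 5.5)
Both sides of \eqref{eq:almost-d} are linear in $\alpha$, and both are degree-one antiderivations of $\bigwedge^\bullet\LL^*$. The left side is one by construction \eqref{eq:CE-trivial}. For the right side, $\Psi_C$ is a degree-zero derivation and $\phi$ is a one-form. Writing $|\alpha|$ for the degree, we get
\[
-\phi\wedge\Psi_C(\alpha\wedge\beta)
=(-\phi\wedge\Psi_C\alpha)\wedge\beta+(-1)^{|\alpha|}\alpha\wedge(-\phi\wedge\Psi_C\beta),
\]
because moving $\phi$ past $\alpha$ costs the sign $(-1)^{|\alpha|}$. Two antiderivations that agree on generators agree everywhere, so I only need to check the identity on scalars and on one-forms $\xi\in\LL^*$. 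On scalars both sides vanish, since $\d_\LL c=0$ and $\Psi_C(1)=0$.

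For a one-form $\xi\in\LL^*$ I evaluate both sides on pairs of vectors. By bilinearity and antisymmetry it suffices to use pairs $(Z,v)$ and $(v,w)$ with $v,w\in\mathfrak n$, since these span $\bigwedge^2\LL$ through the decomposition $\LL=\C Z\oplus\mathfrak n$.

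On the left, \eqref{eq:CE-trivial} gives $(\d_\LL\xi)(u,v)=-\xi(\br uv)$. This is $-\xi(Cv)$ for the pair $(Z,v)$, and it is $0$ for the pair $(v,w)$ because $\mathfrak n$ is abelian by Proposition~\ref{thm:L-almost-abelian}.

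On the right, the wedge convention gives
\[
(\phi\wedge\Psi_C\xi)(u,v)=\phi(u)\,\xi(\ad_Zv)-\phi(v)\,\xi(\ad_Zu).
\]
For the pair $(Z,v)$ this equals $\xi(Cv)-0$, using $\phi(Z)=1$ and $\phi(v)=0$. So $-\phi\wedge\Psi_C\xi$ evaluates to $-\xi(Cv)$, matching the left side. For the pair $(v,w)$ both $\phi$-factors vanish, which again matches.

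The only delicate point is bookkeeping. The convention for evaluating $\phi\wedge\psi$ on a pair must be the same one implicit in \eqref{eq:CE-low-degrees}, namely $(\phi\wedge\psi)(u,v)=\phi(u)\psi(v)-\phi(v)\psi(u)$ with no factor $\tfrac12$, because that is what makes \eqref{eq:twist} consistent. With this convention the sign in \eqref{eq:almost-d} is as stated.
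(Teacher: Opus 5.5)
Your proof is correct and follows the paper's argument: both sides are degree-one antiderivations, so it suffices to compare them on scalars and one-forms, evaluated on the spanning pairs $(Z,v)$ and $(v,w)$ with $v,w\in\mathfrak n$. The only cosmetic difference is that you treat an arbitrary $\xi\in\LL^*$ at once, whereas the paper splits $\LL^*=\C\phi\oplus\mathfrak n^*$ and handles $\phi$ separately via $\d_\LL\phi=0=\Psi_C\phi$.
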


\begin{proof}
Both sides are degree-one derivations of $\bigwedge^\bullet\LL^*$.
For the right-hand side, this holds because $\Psi_C$ is a degree-zero
derivation and $\phi$ has degree one. It is therefore enough to compare
the two sides on scalars and on $\LL^*=\C\phi\oplus\mathfrak n^*$.
Both vanish on scalars. By~\eqref{eq:CE-trivial},
$(\d_\LL\phi)(u,v)=-\phi(\br uv)=0$, and also $\Psi_C\phi=0$.

Let $\eta\in\mathfrak n^*$. Since $\mathfrak n$ is abelian,
$(\d_\LL\eta)(y,y')=0$ for $y,y'\in\mathfrak n$. Moreover,
$(\d_\LL\eta)(Z,y)=-\eta(Cy)$. The two-form $-\phi\wedge\Psi_C\eta$
takes the same values, because $\Psi_C\eta$ vanishes on $Z$ and
restricts to $\eta\circ C$ on $\mathfrak n$. The value of a two-form
on $Z$ and $\mathfrak n$ determines it, which proves~\eqref{eq:almost-d}.
\end{proof}

\begin{lemma}\label{lem:wang}
Let $\mathcal J$ be an invariant generalized complex structure on
$\g_A$, and let $\tau$ be as in~\eqref{eq:chi-phi}. Put
\[
\Psi_p=\Psi_C|_{\bigwedge^p\mathfrak n^*}-\tau\id
\qquad(0\le p\le 2n-1),
\]
and let $\Psi_p$ be the zero map of the zero space otherwise.
Then, for $0\le p\le 2n$,
\begin{equation}\label{eq:wang}
GH_{\db}^{n-p}(\g_A)\cong H^p(\LL,\C_\chi)
\cong\ker\Psi_p\oplus\coker\Psi_{p-1}.
\end{equation}
Let $c_1,\ldots,c_{2n-1}$ be the eigenvalues of $C$, with algebraic
multiplicity. The eigenvalues of $\Psi_p$ are then
$\sum_{i\in I}c_i-\tau$, where $I$ runs over index subsets of size $p$.
If $C$ is diagonalizable, so is $\Psi_p$, and $\dim\ker\Psi_p$ is the
number of these sums that vanish.
\end{lemma}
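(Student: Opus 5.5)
The first isomorphism in \eqref{eq:wang} is the Corollary following Proposition~\ref{prop:canonical-complex}, so the work is to compute $H^p(\LL,\C_\chi)$ from the splitting \eqref{eq:cochain-splitting}. By \eqref{eq:twist}, \eqref{eq:almost-d} and \eqref{eq:chi-phi}, the twisted differential is
\[
\d_{\LL,\chi}\alpha=-\phi\wedge\Psi_C\alpha+\tau\,\phi\wedge\alpha=-\phi\wedge(\Psi_C-\tau\id)\alpha .
\]
Write a $p$-cochain as $\alpha=\beta+\phi\wedge\gamma$ with $\beta\in\bigwedge^p\mathfrak n^*$ and $\gamma\in\bigwedge^{p-1}\mathfrak n^*$. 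Since $\Psi_C(\phi\wedge\gamma)=\phi\wedge\Psi_C\gamma$ and $\phi\wedge\phi=0$, the second summand is a cocycle. The first summand is sent to $-\phi\wedge\Psi_p\beta$.

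The complex is therefore a direct sum, over $p$, of the two-term complexes
\[
\bigwedge^p\mathfrak n^*\xrightarrow{\;-\Psi_p\;}\phi\wedge\bigwedge^p\mathfrak n^*\cong\bigwedge^p\mathfrak n^*,
\]
placed in cochain degrees $p$ and $p+1$. This uses that $\phi\wedge(\cdot)$ is injective on $\bigwedge^\bullet\mathfrak n^*$, which follows from the direct sum decomposition \eqref{eq:cochain-splitting}. In degree $p$, the cocycles are $\ker\Psi_p\oplus\phi\wedge\bigwedge^{p-1}\mathfrak n^*$ and the coboundaries are $\phi\wedge\im\Psi_{p-1}$. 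Hence $H^p\cong\ker\Psi_p\oplus\coker\Psi_{p-1}$. The boundary cases $p=0$ and $p=2n$ follow from the convention that $\Psi_p$ is the zero map on the zero space for $p=-1$ and $p=2n$.

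For the spectral statement, triangularize $C$ over $\C$. Take a basis $y_i$ of $\mathfrak n$ in which $C$ is upper triangular with diagonal entries $c_i$, and let $\eta^i$ be the dual basis. Then $C^T=\eta\mapsto\eta\circ C$ is lower triangular in the $\eta^i$ with the same diagonal. By \eqref{eq:Psi-expanded}, $\Psi_C$ acting on the monomials $\eta^I$ is triangular with respect to a suitable order on subsets of size $p$ (for instance, lexicographic after reversing indices), with diagonal entries $\sum_{i\in I}c_i$. Subtracting $\tau$ then gives the eigenvalues of $\Psi_p$. If $C$ is diagonalizable, we may take $y_i$ to be eigenvectors, so the $\eta^I$ are eigenvectors of $\Psi_p$. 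Then $\Psi_p$ is diagonal, and $\dim\ker\Psi_p$ counts the subsets $I$ with $\sum_{i\in I}c_i=\tau$, that is, the vanishing sums.

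The main obstacle is bookkeeping in the triangularity step: one must check that the off-diagonal terms of $C^T\eta^j$ only move $\eta^I$ to monomials that are strictly later in the chosen order. Everything else is direct.
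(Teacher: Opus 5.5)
Your proposal is correct and follows essentially the same route as the paper: you write $\d_{\LL,\chi}=-\phi\wedge(\Psi_C-\tau\id)$, split cochains as $\beta+\phi\wedge\gamma$ to obtain the two-term complexes $\bigwedge^p\mathfrak n^*\to\phi\wedge\bigwedge^p\mathfrak n^*$, and read off the spectrum by triangularizing $C^T$ on wedge monomials. Your check that the off-diagonal terms of $\Psi_C$ only replace an index by a strictly larger one (so ordering subsets by, e.g., index sum makes $\Psi_C$ triangular) fills in a detail the paper leaves implicit.
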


\begin{proof}
The first isomorphism is~\eqref{eq:ACK}. By~\eqref{eq:cochain-splitting},
a $p$-cochain can be written uniquely as $\alpha+\phi\wedge\beta$, with
$\alpha\in\bigwedge^p\mathfrak n^*$ and
$\beta\in\bigwedge^{p-1}\mathfrak n^*$. We use~\eqref{eq:twist},
\eqref{eq:chi-phi} and~\eqref{eq:almost-d}, together with $\d_\LL\phi=0$
and $\phi\wedge\phi=0$. They give
$\d_\LL(\phi\wedge\beta)=-\phi\wedge\d_\LL\beta=0$, and hence
\begin{equation}\label{eq:two-term}
\d_{\LL,\chi}(\alpha+\phi\wedge\beta)
=-\phi\wedge\Psi_C\alpha+\tau\phi\wedge\alpha
=-\phi\wedge\Psi_p\alpha.
\end{equation}
The complex is therefore the direct sum, over $p$, of the two-term
complexes
\[
\bigwedge^p\mathfrak n^*
\xrightarrow{\;-\phi\wedge\Psi_p\;}
\phi\wedge\bigwedge^p\mathfrak n^*,
\]
placed in degrees $p$ and $p+1$. The map $\phi\wedge$ is injective on
$\bigwedge^\bullet\mathfrak n^*$. Hence the summand indexed by $p$
contributes $\ker\Psi_p$ in degree $p$, and the summand indexed by $p-1$
contributes $\coker\Psi_{p-1}$ in degree $p$. This proves~\eqref{eq:wang}. The isomorphism sends $(\alpha,[\beta])$ to the class of
$F_p^{-1}\bigl((\alpha+\phi\wedge\beta)\otimes\rho\bigr)$.

For the spectrum, triangularize $C^T$ on $\mathfrak n^*$
and use~\eqref{eq:Psi-expanded}. The operator $\Psi_C$ then acts
triangularly on wedge monomials, with diagonal entries $\sum_{i\in I}c_i$.
Subtracting $\tau$ proves the spectral assertion. When $C$ is
diagonalizable, an eigenbasis of $C^T$ gives a wedge eigenbasis of
$\Psi_p$, which proves the kernel count.
\end{proof}

\vspace{1em}

\section{Diagonal structures and eigenvalue sums}\label{sec:diagonal}

In this section, we assume that $A$ is diagonalizable over $\R$.

\begin{theorem}[{\cite{PaperI}*{Theorem A}}]
\label{thm:diagonal-existence}
The Lie algebra $\g_A$ admits an invariant generalized complex structure
of type $k$ if and only if, with multiplicities, the eigenvalues of
$A$ can be arranged as
\begin{equation}\label{eq:A-spectrum}
\spec(A)=\{\lambda_1,\lambda_1,\ldots,\lambda_h,\lambda_h,
\mu,\zeta_1,-\zeta_1,\ldots,\zeta_s,-\zeta_s\},
\qquad h+s=n-1,
\end{equation}
where $h,s\ge0$ and either $k=h$ or $k=h+1$.
\end{theorem}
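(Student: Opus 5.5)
The plan is to prove the two directions separately. Both use the pure spinor description together with the integrability criterion of Proposition~\ref{prop:integrable-differentials}: a nondegenerate $\rho$ is integrable if and only if $\d\rho=c(z)\rho$ with $z\in\overline\LL$. By~\eqref{eq:dgA} we always have $\d\rho=e^0\wedge\beta$, so only terms containing $e^0$ need to be controlled.

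For sufficiency, take an eigenbasis of $A$ arranged as in~\eqref{eq:A-spectrum}. The eigenvectors are $f_i,f_i'$ for each $\lambda_i$, $g$ for $\mu$, and $p_j,q_j$ for $\pm\zeta_j$, with dual covectors $f^i,f'^i,g^*,p^j,q^j$, so that $\d\eta=-e^0\wedge A^T\eta$ on $\h^*$. I will write down explicit spinors for the two models:
\[
\text{(I)}\quad \rho=\exp\Bigl(\ii\bigl(e^0\wedge g^*+\textstyle\sum_j p^j\wedge q^j\bigr)\Bigr)\wedge\bigwedge_i (f^i+\ii f'^i),
\]
\[
\text{(II)}\quad \rho=\exp\Bigl(\ii\textstyle\sum_j p^j\wedge q^j\Bigr)\wedge(e^0+\ii g^*)\wedge\bigwedge_i (f^i+\ii f'^i).
\]
The form $p^j\wedge q^j$ is closed because its eigenvalues are $\zeta_j-\zeta_j=0$. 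Each factor $\theta_i=f^i+\ii f'^i$ satisfies $\d\theta_i=-\lambda_i e^0\wedge\theta_i$. In Model~I, $e^0\wedge g^*$ is closed because $\d e^0=0$ and $e^0\wedge e^0\wedge g^*=0$. In Model~II, $\d(e^0+\ii g^*)=-\ii\mu\, e^0\wedge g^*=-\mu\, e^0\wedge(e^0+\ii g^*)$.

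From these identities, $\d\rho=-\Lambda e^0\wedge\rho=c(-\Lambda e^0)\rho$. It remains to write $-\Lambda e^0$ as an element of $\overline\LL$ modulo $\LL$. This is possible because $E=\LL\oplus\overline\LL$: decompose $e^0=\ell+\bar m$ with $\ell\in\LL$, note that $c(\ell)\rho=0$, and take $z=-\Lambda\bar m$. Nondegeneracy $\omega^{n-k}\wedge\Omega\wedge\overline\Omega\neq0$ follows from counting directions. In Model~I the type is $h$; in Model~II it is $h+1$.

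For necessity, start from an integrable $\rho=e^{\beta+\ii\omega}\wedge\Omega$ of type $k$. Proposition~\ref{thm:L-almost-abelian} gives $\LL=\C Z\ltimes_C\mathfrak n$, and integrability gives $\d\rho=\tau e^0\wedge\rho+(\text{terms killed})$, that is, $\d\rho=c(z)\rho$. I will study the annihilator $\Ann(\Omega)$, which has real part $\Delta=\ker\Omega\cap\ker\overline\Omega$, and the form $\omega$ restricted to $\Delta$. These are $A$-compatible in the following sense. The complex distribution $\ker\overline\Omega\subset\g_\C$ is stable enough under $A$ to carry a complex structure $J$ that commutes with $A$ on an invariant complement, modulo the $e^0$ direction. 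Meanwhile, $\omega|_\Delta$ is symplectic and satisfies $A^T\omega+\omega A=0$ on $\Delta\cap\h$. I will split into cases according to whether $e_0$ lies in the complex part or the symplectic part.
\begin{itemize}
\item On the complex part, an operator commuting with $J$ that is diagonalizable over $\R$ has eigenvalues occurring in identical pairs.
\item On the symplectic part, an infinitesimally symplectic diagonalizable operator has eigenvalues in opposite pairs $\pm\zeta$.
\end{itemize}
The remaining direction of $\h$, paired with $e_0$, contributes the single eigenvalue $\mu$. This yields~\eqref{eq:A-spectrum}, with $k=h$ when $e_0$ lies in the symplectic part and $k=h+1$ when it lies in the complex part.

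The main obstacle is the necessity direction. There, $\beta$ and the non-orthogonal position of $\h$ relative to the complex and symplectic splitting complicate the linear algebra. I would first use a $B$-field transformation to reduce to $\beta$ closed, ideally $\beta=0$, and then show that $\h\cap\Delta$ and $\h\cap\ker\Omega$ can be chosen $A$-invariant. Since this is precisely \cite{PaperI}*{Theorem A}, the argument can also be deferred to that reference, and the present paper only needs the explicit models, which the sufficiency construction above supplies.
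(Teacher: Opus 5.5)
Your sufficiency direction is the paper's own argument. Your two spinors are $\rho_{\mathrm I}$ and $\rho_{\mathrm{II}}$ of \eqref{eq:models}, and the identities, $\d\rho=-\Lambda e^0\wedge\rho$, and the volume-form check are those of Lemma~\ref{lem:model-integrability}. Your projection of $e^0$ onto $\overline\LL$ is harmless: the paper applies Proposition~\ref{prop:integrable-differentials} directly with $z=-\Lambda e^0$, and its converse argument works for any $z\in E$. The paper does not prove necessity; it quotes \cite{PaperI}*{Theorem A}. So deferring to that reference is exactly what the paper does.

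As a stand-alone argument, however, your necessity sketch has a gap at its central claim, the infinitesimal $A$-invariance of $\omega$ on $\Delta\cap\h$. If $\Delta\not\subseteq\h$, this follows from closedness of $\omega|_\Delta$ on the almost abelian subalgebra $\Delta$. If $\Delta\subseteq\h$ (your ``$e_0$ in the complex part'', i.e.\ Model~II), then $\Delta$ is abelian, closedness of the leafwise form says nothing about $A$, and something else is needed. Removing $\beta$ is neither automatic ($e^{-\beta}\wedge\rho$ is integrable only if $\d\beta\wedge\Omega=0$) nor necessary.

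The missing step is to extract from integrability the two conditions that carry all the information. By \eqref{eq:chi} and \eqref{eq:chi-phi}, $z-\tau e^0$ is orthogonal to $\LL$, hence lies in $\LL$, so $\d\rho=\tau e^0\wedge\rho$ exactly. Comparing degrees in $\rho=e^{\beta+\ii\omega}\wedge\Omega$ then gives $\d\Omega=\tau e^0\wedge\Omega$ and $\d(\beta+\ii\omega)\wedge\Omega=0$.

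The first condition makes $\ker\Omega$ a subalgebra of $\g_\C$. Since $\ker\Omega+\ker\overline\Omega=\g_\C$, it contains some $e_0+Y$ with $Y\in\h_\C$, so $V=\ker\Omega\cap\h_\C$ is $A$-invariant. This is the precise form of your ``stable enough under $A$''. Here $V\cap\overline V=(\Delta\cap\h)_\C$. The sum $V+\overline V$ is all of $\h_\C$ if $\Delta\not\subseteq\h$, and an $A$-invariant hyperplane $W_\C$, with $W$ real, if $\Delta\subseteq\h$. This gives $k$, respectively $k-1$, identical pairs, and in the second case $\mu$ from $\h/W$.

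The second condition handles the symplectic part. When $\Delta\subseteq\h$, contract $\d(\beta+\ii\omega)\wedge\Omega=0$ with $e_0+Y$, wedge with $\overline\Omega$, and use \eqref{eq:dgA}. This yields $(\beta+\ii\omega)(Ax,y)+(\beta+\ii\omega)(x,Ay)=0$ for $x,y\in\Delta$, whose imaginary part says $A|_\Delta\in\mathfrak{sp}(\Delta,\omega|_\Delta)$. When $\Delta\not\subseteq\h$, wedging with $\overline\Omega$ alone gives closedness of $\omega|_\Delta$. Hence $A$ preserves the kernel line of $\omega|_{\Delta\cap\h}$, which supplies $\mu$, and is infinitesimally symplectic on the quotient.
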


We record and
verify the explicit structures it constructs, because their canonical
characters enter the cohomology. Choose an eigenbasis
$e_1,\ldots,e_{2n-1}$ of $A$ on $\h$, ordered as in~\eqref{eq:A-spectrum},
let $e^0,\ldots,e^{2n-1}$ be the basis dual to $e_0,\ldots,e_{2n-1}$, and put
\begin{align}
    d=2h+1,\qquad r_m=2h+2m\ & (1\le m\le s),\qquad  \theta_i=e^{2i-1}+\ii e^{2i} \ (1 \le i \le h),
\end{align}
\[
\Theta=\theta_1\wedge\cdots\wedge\theta_h.
\]
Empty products are taken to be $1$, and empty sums to e $0$. Define
\begin{equation}\label{eq:models}
\begin{aligned}
\omega_s&=\sum_{m=1}^s e^{r_m}\wedge e^{r_m+1},\\
\rho_{\mathrm I}
&=e^{\ii(e^0\wedge e^d+\omega_s)}\wedge\Theta,
&k_{\mathrm I}&=h,\\
\rho_{\mathrm {II}}
&=e^{\ii\omega_s}\wedge\Theta\wedge(e^0+\ii e^d),
&k_{\mathrm {II}}&=h+1.
\end{aligned}
\end{equation}
In the notation of the pure-spinor normal form, both models have
$\beta=0$. Model~I has $\omega=e^0\wedge e^d+\omega_s$ and
$\Omega=\Theta$. Model~II has $\omega=\omega_s$ and
$\Omega=\Theta\wedge(e^0+\ii e^d)$. The identical pairs supply $h$
complex directions, and the opposite pairs supply $s$ symplectic planes.
The remaining two directions, spanned by $e_0$ and $e_d$, form a
symplectic plane in Model~I and a complex line in Model~II.

\begin{lemma}\label{lem:model-integrability}
Both spinors in~\eqref{eq:models} are non-degenerate and integrable.
For the corresponding model,
\begin{equation}\label{eq:Lambda}
\d\rho=-\Lambda e^0\wedge\rho=c(-\Lambda e^0)\rho,\qquad
\Lambda_{\mathrm I}=\sum_{i=1}^h\lambda_i,
\qquad
\Lambda_{\mathrm {II}}=\mu+\sum_{i=1}^h\lambda_i.
\end{equation}
\end{lemma}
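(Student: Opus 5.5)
Non-degeneracy is a pointwise check of the normal form $\omega^{n-k}\wedge\Omega\wedge\overline\Omega\ne0$. Integrability then follows from Proposition~\ref{prop:integrable-differentials} once we show $\d\rho=c(z)\rho$ with $z=-\Lambda e^0\in\overline\LL$. Such a $z$ is admissible: $e^0$ is real, so $e^0\in\overline\LL$ would follow from $e^0\in\LL$, and in any case $c(z)\rho=e^0\wedge\rho$ only requires $z$ as an element of $E$. Since $E=\LL\oplus\overline\LL$, we may drop the $\LL$-component of $z$, which annihilates $\rho$. Thus the whole lemma reduces to computing $\d\rho$ with formula~\eqref{eq:dgA}, $\d=-e^0\wedge\sum_j e^j\wedge\iota_{Ae_j}$.

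For non-degeneracy, in Model~I we have $\omega^{n-h}=(e^0\wedge e^d+\omega_s)^{s+1}$, which is a nonzero multiple of $e^0\wedge e^d\wedge\prod_m e^{r_m}\wedge e^{r_m+1}$. Also $\Theta\wedge\overline\Theta$ is a nonzero multiple of $e^1\wedge\cdots\wedge e^{2h}$. These involve disjoint index sets covering $\{0,\ldots,2n-1\}$, so the product is a volume form. Model~II is identical with $\omega_s^s$ and the extra factor $(e^0+\ii e^d)\wedge(e^0-\ii e^d)=-2\ii e^0\wedge e^d$.

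For $\d\rho$, note that $D:=\sum_j e^j\wedge\iota_{Ae_j}$ is the degree-zero derivation of $\bigwedge\h^*_\C$ (extended to forms by $De^0=0$) acting on $e^j$ by $e^j\mapsto\lambda(j)e^j$, where $\lambda(j)$ is the eigenvalue of $e_j$. Because $A$ is diagonal in this basis, $e^j\circ A=\lambda(j)e^j$. Hence $\d\alpha=-e^0\wedge D\alpha$.

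\emph{Model~I.} Since $e^0\wedge e^0=0$, the factor $e^{\ii e^0\wedge e^d}$ contributes only its constant term after wedging with $e^0$, so $\d\rho_{\mathrm I}=-e^0\wedge D(e^{\ii\omega_s}\wedge\Theta)$. Each plane term gives $D(e^{r_m}\wedge e^{r_m+1})=(\zeta_m-\zeta_m)e^{r_m}\wedge e^{r_m+1}=0$, so $D\omega_s=0$ and therefore $De^{\ii\omega_s}=0$. Each $\theta_i$ is a $\lambda_i$-eigenvector of $D$, so $D\Theta=(\sum\lambda_i)\Theta$. This gives $\d\rho_{\mathrm I}=-\Lambda_{\mathrm I}e^0\wedge e^{\ii\omega_s}\wedge\Theta=-\Lambda_{\mathrm I}e^0\wedge\rho_{\mathrm I}$, again using $e^0\wedge e^0=0$.

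\emph{Model~II.} The same reasoning applies to $e^{\ii\omega_s}\wedge\Theta\wedge(e^0+\ii e^d)$. Wedging with $e^0$ kills $e^0$ inside the last factor, while $D(\ii e^d)=\mu\,\ii e^d$. This gives $\d\rho_{\mathrm{II}}=-e^0\wedge(\Lambda_{\mathrm I}+\mu)\,e^{\ii\omega_s}\wedge\Theta\wedge\ii e^d$. Reinserting the harmless $e^0$ term, which vanishes against the outer $e^0$, gives $-\Lambda_{\mathrm{II}}e^0\wedge\rho_{\mathrm{II}}$.

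The main obstacle is the sign and ordering bookkeeping in these two computations. Specifically, we must confirm that the sign convention in~\eqref{eq:dgA} gives $De^j=+\lambda(j)e^j$ rather than its negative, and that $D$ commutes past the exponential. Everything else is direct. Finally, $c(-\Lambda e^0)\rho=-\Lambda e^0\wedge\rho$ by~\eqref{eq:clifford}, which closes the argument via Proposition~\ref{prop:integrable-differentials}.
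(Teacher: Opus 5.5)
Your proof is correct and follows the paper's argument: you compute $\d$ on the building blocks via \eqref{eq:dgA}, check that the normal-form product is a volume form, and invoke Proposition~\ref{prop:integrable-differentials}. Writing $\d=-e^0\wedge D$, with $D$ an even derivation satisfying $De^0=0$, replaces the paper's sign bookkeeping for moving $e^0$ past $\Theta$, and discarding the $\LL$-component of $z=-\Lambda e^0$ spells out the requirement $z\in\overline\LL$, which the paper leaves implicit.
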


\begin{proof}
Write $Ae_j=a_je_j$. By~\eqref{eq:dgA}, $\d e^0=0$ and
$\d e^j=-a_je^0\wedge e^j$ for $j\ge1$. Hence
\[
\d\omega_s=0,\quad \d(e^0\wedge e^d)=0,\quad
\d\theta_i=-\lambda_i e^0\wedge\theta_i,
\quad
\d(e^0+\ii e^d)=-\mu e^0\wedge(e^0+\ii e^d).
\]
For instance,
$\d(e^{r}\wedge e^{r+1})=-\zeta e^0\wedge e^r\wedge e^{r+1}
+\zeta e^0\wedge e^r\wedge e^{r+1}=0$ when $a_r=-a_{r+1}=\zeta$.
Moving $e^0$ to the front of $\Theta$ produces signs that cancel those
of the Leibniz rule, so $\d\Theta=-\bigl(\sum_i\lambda_i\bigr)e^0\wedge\Theta$.
The graded Leibniz rule then proves~\eqref{eq:Lambda}.

For non-degeneracy, the normal-form condition reads
$\omega^{n-k}\wedge\Omega\wedge\overline\Omega\ne0$. In Model~I,
$n-k=s+1$ and $\omega^{s+1}=(s+1)\,e^0\wedge e^d\wedge\omega_s^s$.
In Model~II, $n-k=s$, and
$(e^0+\ii e^d)\wedge(e^0-\ii e^d)=-2\ii e^0\wedge e^d$. In both cases,
the product is therefore a nonzero multiple of
\[
e^0\wedge e^d\wedge\omega_s^s\wedge\Theta\wedge\overline\Theta,
\]
which is a volume form. Since $\d\rho=c(-\Lambda e^0)\rho$,
Proposition~\ref{prop:integrable-differentials} proves integrability.
\end{proof}

\

For these models, the data $(C,\tau)$ of Lemma~\ref{lem:wang} are
explicit. The following table fixes the normalization of $Z$ and the one
exceptional eigenvalue. The other eigenvalues occur in pairs.

\begin{proposition}\label{prop:specC}
For either spinor in~\eqref{eq:models}, let $\LL=\Ann(\rho)$, and let
$\phi,\mathfrak n,C$ be as in Proposition~\ref{thm:L-almost-abelian},
with $Z$ given by the table below. Then $C$ is diagonalizable and
\begin{equation}\label{eq:C-spectrum}
\Sigma:=\spec(C)
=\{\pm\lambda_1,\ldots,\pm\lambda_h,
\pm\zeta_1,\ldots,\pm\zeta_s,\nu\},
\end{equation}
where
\[
\begin{array}{c|c|c|c|c}
\text{model}&\text{type}& Z&\nu&\Lambda\\\hline
\mathrm I&h&e_0-\ii e^d&\mu&\sum_i\lambda_i\\
\mathrm {II}&h+1&e_0+\ii e_d&-\mu&\mu+\sum_i\lambda_i
\end{array}
\]
The canonical character is $\chi=-\Lambda\phi$; that is, $\tau=-\Lambda$.
If $\g_A$ is unimodular, then $\tr C=-2\Lambda$.
\end{proposition}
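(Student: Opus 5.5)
The plan is to compute $\LL=\Ann(\rho)$ explicitly, block by block, read off $\mathfrak n$ and the action of $\ad_Z$ on a basis of $\mathfrak n$, and then get $\tau$ and the trace identity from the character formula~\eqref{eq:chi} and Lemma~\ref{lem:model-integrability}.

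\textbf{Step 1: the annihilator.} Both spinors in~\eqref{eq:models} are wedge products of factors living on mutually complementary sets of coordinates. These sets are the pairs $(e_{2i-1},e_{2i})$, the pairs $(e_{r_m},e_{r_m+1})$, and the pair $(e_0,e_d)$. Hence $\Ann(\rho)$ is the direct sum of the annihilators of the individual factors, each taken in its own block. I will simply verify the following list by applying $c(\cdot)$.
\begin{itemize}
\item For $\theta_i$, the annihilator is spanned by $\theta_i$ and $e_{2i-1}+\ii e_{2i}$.
\item For $e^{\ii e^{r}\wedge e^{r+1}}$ with $r=r_m$, it is spanned by $e_r-\ii e^{r+1}$ and $e_{r+1}+\ii e^{r}$, since these vectors have the form $X-\ii\,\iota_X\omega$.
\item In Model~I the factor $e^{\ii e^0\wedge e^d}$ gives $e_0-\ii e^d$ and $e_d+\ii e^0$.
\item In Model~II the factor $e^0+\ii e^d$ gives $e^0+\ii e^d$ and $e_0+\ii e_d$.
\end{itemize}
Counting gives $2n$ vectors, as required. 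In each model exactly one basis vector has $\phi\neq0$, namely the $Z$ of the table. The remaining $2n-1$ vectors therefore span $\mathfrak n$.

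\textbf{Step 2: diagonalizing $C$.} I will compute $\br Z u$ for each basis vector $u$ of $\mathfrak n$ using~\eqref{eq:cotangent-bracket}. The needed rules are $[e_0,e_j]=a_je_j$, $\ad_{e_0}^*e^j=-a_je^j$, and $(\ad_Y^*e^j)(e_0)=e^j(AY)$.

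For the vector part $e_0$ of $Z$, this gives:
\begin{itemize}
\item $\theta_i\mapsto-\lambda_i\theta_i$ and $e_{2i-1}+\ii e_{2i}\mapsto\lambda_i(e_{2i-1}+\ii e_{2i})$;
\item $e_r-\ii e^{r+1}\mapsto\zeta(e_r-\ii e^{r+1})$ and $e_{r+1}+\ii e^{r}\mapsto-\zeta(e_{r+1}+\ii e^{r})$.
\end{itemize}
The extra part of $Z$ ($-\ii e^d$ in Model~I, $\ii e_d$ in Model~II) only interacts with the $d$-block, because $e^d(AY)$ and $[e_d,\cdot\,]$ involve only $e_d$ and $e^0$.

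The remaining computation concerns the last basis vector of $\mathfrak n$. In Model~I, $\br{e_0-\ii e^d}{e_d+\ii e^0}=\mu e_d+\ii\mu e^0$, which is an eigenvector with $\nu=\mu$. In Model~II, a similar computation of $\br{e_0+\ii e_d}{e^0+\ii e^d}$ should give $-\mu(e^0+\ii e^d)$. This uses $\ad_{e_0}^*e^d=-\mu e^d$ and $\ad_{e_d}^*e^0=0$, and the sign coming from $\ad_{e_d}^*e^d$ must be tracked. I expect this sign bookkeeping in the $d$-block to be the only delicate point of the proof, and I will check it carefully.

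Since we have found an eigenbasis, $C$ is diagonalizable with spectrum~\eqref{eq:C-spectrum}.

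\textbf{Step 3: $\tau$ and the trace.} By Lemma~\ref{lem:model-integrability}, $\d\rho=c(z)\rho$ with $z=-\Lambda e^0$. Formula~\eqref{eq:chi} then gives $\chi(v)=2\langle-\Lambda e^0,v\rangle=-\Lambda e^0(X)$ for $v=X+\xi$, that is, $\chi=-\Lambda\phi$ and $\tau=-\Lambda$.

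For the trace, unimodularity means $\tr A=2\sum_i\lambda_i+\mu=0$. Summing~\eqref{eq:C-spectrum}, the paired eigenvalues cancel, so $\tr C=\nu$.
\begin{itemize}
\item In Model~I, $-2\Lambda=-2\sum_i\lambda_i=\mu=\nu$.
\item In Model~II, $-2\Lambda=-2\mu-2\sum_i\lambda_i=-2\mu+\mu=-\mu=\nu$.
\end{itemize}
In both cases $\tr C=-2\Lambda$.
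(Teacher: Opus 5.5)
Your proposal is correct and follows the paper's proof essentially step by step: an explicit eigenbasis of $\mathfrak n=\ker\phi$ inside $\Ann(\rho)$, then the brackets with $Z$, then $\chi=-\Lambda\phi$ from $z=-\Lambda e^0$ and $\tr C=\nu$. The differences are cosmetic: the paper obtains the annihilator from the $B$-field identity $c(X+\xi-\iota_X\gamma)(e^\gamma\wedge\alpha)=e^\gamma\wedge c(X+\xi)\alpha$ rather than your block factorization, and it uses $v_i=e_{2i}-\ii e_{2i-1}=-\ii(e_{2i-1}+\ii e_{2i})$.

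The Model~II bracket you deferred does come out as claimed. Since $\ad^*_{e_d}e^d=\mu e^0$ and $\ad^*_{e_0}e^0=\ad^*_{e_d}e^0=0$, one gets $\br{e_0+\ii e_d}{e^0+\ii e^d}=\ii\,\ad^*_{e_0}e^d+\ii^2\ad^*_{e_d}e^d=-\ii\mu e^d-\mu e^0=-\mu(e^0+\ii e^d)$.
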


\begin{proof}
For a two-form $\gamma$, the Clifford action~\eqref{eq:clifford}
satisfies
\[
c(X+\xi-\iota_X\gamma)(e^\gamma\wedge\alpha)
=e^\gamma\wedge c(X+\xi)\alpha,
\]
because $\iota_X(e^\gamma\wedge\alpha)
=\iota_X\gamma\wedge e^\gamma\wedge\alpha+e^\gamma\wedge\iota_X\alpha$.
If $\Omega$ is decomposable, the two components of
$c(X+\xi)\Omega$ have different degrees. Consequently, for
$\rho=e^{\ii\omega}\wedge\Omega$,
\begin{equation}\label{eq:annihilator}
\Ann(\rho)=\{X-\ii\iota_X\omega+\xi:
\iota_X\Omega=0,\ \xi\wedge\Omega=0\}.
\end{equation}
The vectors $Z$ in the table satisfy these conditions and have
$\phi(Z)=1$. In both models, define
\[
v_i=e_{2i}-\ii e_{2i-1},\qquad
p_m=e_{r_m}-\ii e^{r_m+1},\qquad
q_m=e_{r_m+1}+\ii e^{r_m}.
\]
A basis of $\mathfrak n$ consists of $\theta_i,v_i$ ($1\le i\le h$) and
$p_m,q_m$ ($1\le m\le s$), together with the additional element
\[
w_{\mathrm I}=e_d+\ii e^0,\qquad
w_{\mathrm {II}}=e^0+\ii e^d.
\]
Indeed, each of these elements satisfies~\eqref{eq:annihilator} and has
$\phi=0$. Their vector parts span the solutions $X$ of the vector
conditions with $e^0(X)=0$; this space has dimension $h+2s+1$ in
Model~I and $h+2s$ in Model~II. The remaining pure covectors span the
solutions $\xi$ of $\xi\wedge\Omega=0$, of dimension $h$ and $h+1$,
respectively. Altogether there are $2h+2s+1=2n-1$ independent elements.

By~\eqref{eq:cotangent-bracket}, the coadjoint terms are
\[
\ad_{e_0}^*|_{\h^*}=-A^T,\qquad
\ad_{e_0}^*e^0=0,\qquad
\ad_Y^*\eta=\eta(AY)e^0\quad(Y\in\h_\C).
\]
In both models these formulas give
\[
\br Z{\theta_i}=-\lambda_i\theta_i,\quad
\br Z{v_i}=\lambda_i v_i,\quad
\br Z{p_m}=\zeta_m p_m,\quad
\br Z{q_m}=-\zeta_m q_m.
\]
The possible extra terms $\eta(AY)e^0$ vanish here, since
$e^d(AY)=0$ for $Y=v_i,e_{r_m},e_{r_m+1}$, and $\eta(Ae_d)=\mu\,\eta(e_d)=0$ for $\eta=\theta_i,e^{r_m},e^{r_m+1}$. For the exceptional
elements,
\[
\br{e_0-\ii e^d}{e_d+\ii e^0}
=\mu e_d+\ii\mu e^0=\mu w_{\mathrm I},
\]
and
\[
\br{e_0+\ii e_d}{e^0+\ii e^d}
=-\ii\mu e^d-\mu e^0=-\mu w_{\mathrm {II}}.
\]
Thus the displayed basis is an eigenbasis for $C$.

By~\eqref{eq:Lambda} we may take $z=-\Lambda e^0$ in~\eqref{eq:chi}.
This gives $\chi(X+\xi)=2\langle-\Lambda e^0,X+\xi\rangle
=-\Lambda e^0(X)$, so $\chi=-\Lambda\phi$, in accordance
with~\eqref{eq:chi-phi}. Finally, unimodularity means
$0=\tr A=2\sum_i\lambda_i+\mu$, and $\tr C=\nu$. This gives
$\mu=-2\Lambda_{\mathrm I}$ and $-\mu=-2\Lambda_{\mathrm {II}}$, proving
the trace assertion.
\end{proof}

\

We can now state the dimension formula using only the pair
$(\Sigma,\Lambda)$. It applies to either model, including empty families
and repeated or zero eigenvalues.

\begin{theorem}\label{thm:master}
For either structure~\eqref{eq:models}, list the indexed spectrum of $C$
as $c_1,\ldots,c_{2n-1}$, and set
\[
N_p=\#\left\{I\subseteq\{1,\ldots,2n-1\}:|I|=p,
\ \sum_{i\in I}c_i=-\Lambda\right\}\quad(0\le p\le 2n-1),
\]
with $N_p=0$ otherwise. Then
\begin{equation}\label{eq:master}
\dim GH_{\db}^{n-p}(\g_A)=N_p+N_{p-1}\quad(0\le p\le2n).
\end{equation}
Equivalently, let
$P_\rho(t)=\sum_{p=0}^{2n}\dim GH_{\db}^{n-p}(\g_A)\,t^p$. Then
\begin{equation}\label{eq:genfun}
P_\rho(t)=(1+t)[q^{-\Lambda}]\prod_{i=1}^{2n-1}(1+tq^{c_i}).
\end{equation}
Here the product lies in the group algebra $\Z[\mathcal A][t]$, where
$\mathcal A\subset\R$ is the additive group generated by
$c_1,\ldots,c_{2n-1},\Lambda$. The symbols satisfy
$q^\gamma q^{\gamma'}=q^{\gamma+\gamma'}$, and the bracket denotes
coefficient extraction. If $\tr A=0$, then
\begin{equation}\label{eq:symmetry}
N_p=N_{2n-1-p},\qquad
\dim GH_{\db}^j(\g_A)=\dim GH_{\db}^{-j}(\g_A).
\end{equation}
\end{theorem}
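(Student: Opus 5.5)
The plan is to combine Lemma~\ref{lem:wang} with Proposition~\ref{prop:specC}. By Proposition~\ref{prop:specC}, $C$ is diagonalizable with indexed spectrum $c_1,\ldots,c_{2n-1}$ and $\tau=-\Lambda$. Hence $\Psi_p=\Psi_C|_{\bigwedge^p\mathfrak n^*}+\Lambda\id$ is diagonalizable, and its eigenvalues are $\sum_{i\in I}c_i+\Lambda$ for $|I|=p$. The last assertion of Lemma~\ref{lem:wang} then gives $\dim\ker\Psi_p=N_p$. Since $\Psi_p$ is an endomorphism of a finite-dimensional space, $\dim\coker\Psi_{p-1}=\dim\ker\Psi_{p-1}=N_{p-1}$. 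Substituting both into~\eqref{eq:wang} yields~\eqref{eq:master}. At the ends one uses the convention that $\Psi_p$ is the zero map on the zero space for $p\notin[0,2n-1]$, so both kernel and cokernel vanish there, consistent with $N_p=0$.

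For the generating function, expand the product over subsets:
\[
\prod_{i=1}^{2n-1}(1+tq^{c_i})=\sum_{I}t^{|I|}q^{\sum_{i\in I}c_i}.
\]
This sum lies in $\Z[\mathcal A][t]$, and its coefficient of $q^{-\Lambda}$ is $\sum_p N_pt^p$. The exponents lie in $\mathcal A$ by construction, and $\Lambda\in\mathcal A$, so the extraction makes sense. Multiplying by $(1+t)$ gives $\sum_p(N_p+N_{p-1})t^p=P_\rho(t)$ by~\eqref{eq:master}.

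For the symmetry, assume $\tr A=0$. Proposition~\ref{prop:specC} gives $\sum_{i=1}^{2n-1}c_i=\tr C=-2\Lambda$. Complementation $I\mapsto I^c$ is a bijection between $p$-subsets and $(2n-1-p)$-subsets, and
\[
\sum_{i\in I^c}c_i=-2\Lambda-\sum_{i\in I}c_i.
\]
So $\sum_{I}c_i=-\Lambda$ holds if and only if $\sum_{I^c}c_i=-\Lambda$, which proves $N_p=N_{2n-1-p}$. Consequently
\[
\dim GH^{n-p}_{\db}=N_p+N_{p-1}=N_{2n-1-p}+N_{2n-p}=\dim GH^{n-(2n-p)}_{\db}=\dim GH^{p-n}_{\db},
\]
which is the claimed symmetry $j\leftrightarrow -j$.

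The argument is essentially bookkeeping. The one point needing care is the indexing convention with repeated eigenvalues: $N_p$ counts index subsets, which matches the multiplicity of the eigenvalue of $\Psi_p$ on the wedge eigenbasis built from an eigenbasis of $C^T$. I would make this explicit using the wedge eigenbasis from the proof of Lemma~\ref{lem:wang}.
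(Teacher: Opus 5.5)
Your proposal is correct and follows the paper's proof essentially step by step. Proposition~\ref{prop:specC} supplies $\tau=-\Lambda$ and a diagonalizable $C$ to Lemma~\ref{lem:wang}. The wedge eigenbasis gives $\dim\ker\Psi_p=N_p$, rank--nullity gives $\dim\coker\Psi_{p-1}=N_{p-1}$, expanding the product over index subsets gives the generating function, and $\tr C=-2\Lambda$ with complementation of subsets gives the symmetry. Your explicit remarks on the boundary conventions and on repeated eigenvalues only make precise what the paper leaves implicit.
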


\begin{proof}
By Proposition~\ref{prop:specC}, Lemma~\ref{lem:wang} applies with
$\tau=-\Lambda$ and diagonalizable $C$. For an eigenbasis $\eta_i$ of
$C^T$, one has
\[
\Psi_p(\eta_{i_1}\wedge\cdots\wedge\eta_{i_p})
=\left(\sum_{a=1}^p c_{i_a}+\Lambda\right)
\eta_{i_1}\wedge\cdots\wedge\eta_{i_p}.
\]
Thus $\dim\ker\Psi_p=N_p$. Since $\Psi_{p-1}$ is an endomorphism,
rank--nullity gives $\dim\coker\Psi_{p-1}=N_{p-1}$, and~\eqref{eq:wang}
proves~\eqref{eq:master}. Expanding the product in~\eqref{eq:genfun}
shows that its $q^{-\Lambda}$ coefficient is $\sum_pN_pt^p$.
Multiplication by $1+t$ gives $P_\rho$.

When $\tr A=0$, Proposition~\ref{prop:specC} gives
$\sum_i c_i=-2\Lambda$. A subset has sum $-\Lambda$ exactly when its
complement does, which proves $N_p=N_{2n-1-p}$. Substituting $2n-p$ for $p$
in~\eqref{eq:master} then gives
$\dim GH_{\db}^{p-n}=N_{p-1}+N_p=\dim GH_{\db}^{n-p}$.
\end{proof}

\begin{corollary}\label{cor:generic}
Assume $\tr A=0$ and that the indexed family
$\lambda_1,\ldots,\lambda_h,\zeta_1,\ldots,\zeta_s$ is linearly
independent over $\Q$. For either model,
\[
N_{h+2r}=N_{h+2r+1}=\binom{s}{r}\quad(0\le r\le s),
\qquad N_p=0\text{ otherwise}.
\]
Consequently,
\begin{equation}\label{eq:generic-polynomial}
P_\rho(t)=t^h(1+t)^2(1+t^2)^s,
\end{equation}
and
\[
\sum_j\dim GH_{\db}^j(\g_A)=2^{s+2},\qquad
GH_{\db}^j(\g_A)=0\quad\text{if }|j|>n-h.
\]
To express this in terms of the type $k$, substitute $(h,s)=(k,n-k-1)$
for Model~I and $(h,s)=(k-1,n-k)$ for Model~II.
\end{corollary}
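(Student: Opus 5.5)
The plan is to feed the explicit spectrum of Proposition~\ref{prop:specC} into the counting formula of Theorem~\ref{thm:master}. First I rewrite the exceptional eigenvalue $\nu$ and the target $-\Lambda$ in terms of $\sigma:=\sum_i\lambda_i$, using unimodularity $0=\tr A=2\sigma+\mu$, so $\mu=-2\sigma$.

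In Model~I this gives $\nu=\mu=-2\sigma$ and $-\Lambda=-\sigma$. In Model~II it gives $\Lambda=\mu+\sigma=-\sigma$, so $-\Lambda=\sigma$, and $\nu=-\mu=2\sigma$. In both cases the data therefore have the form $\nu=\mp2\sigma$ with target $\mp\sigma$, the sign being $-$ for Model~I and $+$ for Model~II. This is why the two models should give identical counts.

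Next I parametrize an index subset $I$ of the spectrum $\{\pm\lambda_1,\ldots,\pm\lambda_h,\pm\zeta_1,\ldots,\pm\zeta_s,\nu\}$ by local choices.
\begin{itemize}
\item For each $i$, the pair $\{\lambda_i,-\lambda_i\}$ contributes one of four options: none, $+\lambda_i$, $-\lambda_i$, or both. These give a coefficient $a_i\in\{0,1,-1,0\}$ on $\lambda_i$ and add $0,1,1,2$ elements, respectively.
\item For each $m$, the pair $\{\zeta_m,-\zeta_m\}$ similarly gives a coefficient $b_m$.
\item Finally, $\varepsilon\in\{0,1\}$ records whether $\nu$ is included.
\end{itemize}
The sum condition in Model~I reads
\[
\sum_i a_i\lambda_i+\sum_m b_m\zeta_m-2\varepsilon\sigma=-\sigma .
\]
Rational linear independence of the indexed family lets me compare coefficients. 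This forces $b_m=0$ for all $m$ and $a_i=2\varepsilon-1$ for all $i$. Independence also guarantees that all $\lambda_i,\zeta_m$ are nonzero, so "$b_m=0$" genuinely means "none or both" and no further coincidences occur. Model~II gives the same equations with all signs reversed, namely $a_i=1-2\varepsilon$.

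The count then follows. If $\varepsilon=0$, each $\lambda$-pair contributes exactly one element, namely $-\lambda_i$ in Model~I and $+\lambda_i$ in Model~II. Each $\zeta$-pair contributes none or both. Choosing $r$ of the $\zeta$-pairs to take fully gives $\binom{s}{r}$ subsets of size $h+2r$. If $\varepsilon=1$, the same description holds with the opposite sign choice and one extra element, giving $\binom{s}{r}$ subsets of size $h+2r+1$. This proves the formula for $N_p$. Consequently
\[
\sum_pN_pt^p=t^h(1+t)(1+t^2)^s,
\]
and multiplying by $(1+t)$ as in~\eqref{eq:genfun} yields~\eqref{eq:generic-polynomial}.

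Evaluating~\eqref{eq:generic-polynomial} at $t=1$ gives the total dimension $2^{s+2}$. The polynomial has nonzero coefficients exactly in degrees $h\le p\le h+2s+2=2n-h$, using $h+s=n-1$. Since $j=n-p$, this translates to $|j|\le n-h$. The type substitutions come from $k=h$ in Model~I and $k=h+1$ in Model~II, together with $h+s=n-1$.

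The only delicate point is the coefficient comparison. One must make sure that independence is applied to the indexed family, so that no cancellation like $\lambda_1=\lambda_2$ or $\zeta_m=\lambda_i$ creates extra subsets. One must also handle the edge cases $h=0$ or $s=0$, where the family may be empty, $\sigma=0$, and the argument is unchanged.
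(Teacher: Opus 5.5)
Your proposal is correct. It is essentially the argument the paper intends: the corollary appears there without a written proof, as a direct consequence of substituting the spectrum and $\Lambda$ of Proposition~\ref{prop:specC} into the count of Theorem~\ref{thm:master}, with $\mu=-2\sum_i\lambda_i$ from $\tr A=0$ and $\Q$-linear independence used to compare integer coefficients. Your reduction of both models to $\nu=\mp2\sigma$ with target $\mp\sigma$, the resulting $\binom{s}{r}$ counts in sizes $h+2r$ and $h+2r+1$, and the deductions of the total dimension and the vanishing range are all correct.
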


\begin{example}\label{ex:generic}
Suppose $\lambda,\kappa$ are linearly independent over $\Q$ and
$\spec(A)=\{\lambda,-\lambda,\kappa,\kappa,-2\kappa\}$.
For Model~II with $h=s=1$, $\lambda_1=\kappa$, $\zeta_1=\lambda$ and
$\mu=-2\kappa$, the type is $2$. Moreover, $\Lambda=-\kappa$ and
$\Sigma=\{\lambda,-\lambda,\kappa,-\kappa,2\kappa\}$.
By Corollary~\ref{cor:generic},
\[
P_\rho(t)=t(1+t)^2(1+t^2),\qquad
(\dim GH_{\db}^{3},\ldots,\dim GH_{\db}^{-3})=(0,1,2,2,2,1,0).
\]
\end{example}

\vspace{1em}

\section{Comparison with compact quotients}\label{sec:quotients}

Sections~\ref{sec:algebra} and~\ref{sec:diagonal} compute the cohomology
of invariant forms. We now ask when these computations give the
cohomology of a compact quotient. Let
\[
G_A=\R\ltimes_{e^{tA}}\h
\]
be the simply connected Lie group with Lie algebra $\g_A$, let
$\Gamma<G_A$ be a lattice, and put $X=\Gamma\backslash G_A$. A Lie group
with a lattice is unimodular. Hence $\tr A=0$ throughout this section,
and the symmetry~\eqref{eq:symmetry} holds whenever
Theorem~\ref{thm:master} applies.

An invariant generalized complex structure $\mathcal J$ on $\g_A$
descends to $X$. We keep the notation $L_X$, $K_X$, $U^j(X)$ and
$H^p(L_X,K_X)$ of Section~\ref{sec:spinors}. The grading of forms on $X$
is the left translate of the invariant grading, so an invariant basis of
$\UU^j$ is a global frame:
\begin{equation}\label{eq:invariant-frame}
U^j(X)=C^\infty(X,\C)\otimes_\C\UU^j .
\end{equation}
The inclusion $\UU^\bullet\subset U^\bullet(X)$ commutes with $\partial$
and $\db$. Through the maps $F_p$, it is the inclusion
\[
C^\bullet(\LL,K)\hookrightarrow C^\bullet(L_X,K_X)
\]
of invariant cochains, and it induces maps
$GH^j_{\db}(\g_A)\to GH^j_{\db}(X)$ compatible with~\eqref{eq:ACK}
and~\eqref{eq:global-ACK}.

The comparison proceeds in four steps. Averaging over $X$ shows that
invariant forms are always a direct summand, and reduces the four
cohomologies of Section~\ref{sec:spinors} to generalized Dolbeault
cohomology (Section~\ref{subsec:averaging}). For the structures given
by the quotient construction of~\cite{PaperI}, which include Model~II,
the eigenspace $\LL$ is an extension of the eigenspace of a complex
structure on a quotient algebra $\mathfrak b$ by an abelian ideal
(Section~\ref{subsec:adapted}). On $X$ this extension is realized by a
symplectic torus bundle over a compact complex manifold $B$, and two
spectral sequences reduce the comparison to Dolbeault cohomology of $B$
with coefficients in the cohomology of the fibers
(Sections~\ref{subsec:torus-bundle}--\ref{subsec:comparison}). Finally,
these coefficients split into characters indexed by eigenvalue sums of
$A$, as in Lemma~\ref{lem:wang}. Whether they are computed by invariant
forms depends on $\Gamma$, and not only on $\g_A$ and $\mathcal J$
(Section~\ref{subsec:characters}).

\subsection{Averaging and reduction to \texorpdfstring{$\db$}{dbar}}\label{subsec:averaging}

Fix an invariant volume form $\mathrm{vol}_X$ on $X$ with
$\int_X\mathrm{vol}_X=1$, and an invariant coframe
$\xi^1,\ldots,\xi^{2n}$ of $\g_A$. Every complex form on $X$ can be
written uniquely as $\sum_If_I\xi^I$ with $f_I\in C^\infty(X,\C)$.
Define
\[
\operatorname{Av}\Bigl(\sum_If_I\xi^I\Bigr)
=\sum_I\Bigl(\int_Xf_I\,\mathrm{vol}_X\Bigr)\xi^I .
\]

\begin{lemma}\label{lem:averaging}
The operator $\operatorname{Av}$ commutes with $\d$, maps $U^j(X)$ onto
$\UU^j$, and commutes with $\partial$ and $\db$. Consequently, putting
$Q^j=\ker(\operatorname{Av}|_{U^j(X)})$,
\begin{equation}\label{eq:averaging-splitting}
U^\bullet(X)=\UU^\bullet\oplus Q^\bullet
\end{equation}
is a decomposition into subcomplexes for both $\partial$ and $\db$, and
$\overline{Q^j}=Q^{-j}$.
\end{lemma}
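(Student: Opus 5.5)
The plan is to realize $\operatorname{Av}$ as an average over the action of $G_A$ on $X$ by right translations. This makes all the compatibilities formal consequences of invariance. For $g\in G_A$, right translation $R_g:\Gamma x\mapsto\Gamma xg$ is a diffeomorphism of $X$. Since the coframe $\xi^i$ consists of left-invariant forms, $R_g^*\xi^i=\operatorname{Ad}$-twisted combinations of the $\xi^j$, so right translations do \emph{not} preserve the invariant frame. Instead I will argue directly in the frame.

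First, $\operatorname{Av}$ commutes with $\d$. Write a form as $\sum_If_I\xi^I$. Then
\[
\d\Bigl(\sum_If_I\xi^I\Bigr)=\sum_I\d f_I\wedge\xi^I+\sum_If_I\,\d\xi^I .
\]
The second sum has constant structure coefficients in the frame, because $\d\xi^I$ is invariant. Its average is therefore $\sum_I(\int f_I)\d\xi^I=\d\operatorname{Av}(\cdot)$. For the first sum, expand $\d f_I=\sum_a X_a^L(f_I)\xi^a$, where $X_a^L$ is the left-invariant vector field dual to the coframe. The key fact is $\int_XX_a^L(f)\,\mathrm{vol}_X=0$. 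This holds because $\mathcal L_{X_a^L}\mathrm{vol}_X=(\operatorname{div}X_a^L)\mathrm{vol}_X$, and the divergence of a left-invariant field with respect to an invariant volume is $-\tr\ad_{X_a}$. That trace vanishes since $\tr A=0$: indeed $\tr\ad_{e_0}=\tr A$, and $\ad_Y$ is nilpotent for $Y\in\h$. Stokes' theorem then gives $\int_X\mathcal L_{X_a^L}(f\,\mathrm{vol}_X)=0$. So the first sum averages to zero, and $\operatorname{Av}\d=\d\operatorname{Av}$. This unimodularity and divergence step is the only non-formal point, and I expect it to be the main (though mild) obstacle.

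Next, $\operatorname{Av}$ is independent of the chosen invariant coframe, since a change of frame has constant coefficients and the averaging is $C^\infty(X)$-linear in $f$ up to integration. Choose the coframe adapted to the decomposition $S=\bigoplus_j\UU^j$, that is, choose a basis of $S$ subordinate to the $\UU^j$. By \eqref{eq:invariant-frame}, $U^j(X)$ consists exactly of the forms $\sum f_s s$ with $s$ running over this basis of $\UU^j$. Averaging replaces each $f_s$ by its mean, so $\operatorname{Av}$ maps $U^j(X)$ into $\UU^j$, and onto it, since $\operatorname{Av}$ is the identity on invariant forms. Hence $\operatorname{Av}$ commutes with the projections $\operatorname{pr}_j$. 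Combining this with $\d=\partial+\db$ and comparing graded components then gives $\operatorname{Av}\partial=\partial\operatorname{Av}$ and $\operatorname{Av}\db=\db\operatorname{Av}$.

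Finally, $\operatorname{Av}$ is a projection onto $\UU^\bullet$ that commutes with both differentials. Therefore its kernel $Q^\bullet$ is a subcomplex for each of them, and $U^\bullet(X)=\im\operatorname{Av}\oplus\ker\operatorname{Av}$ gives \eqref{eq:averaging-splitting}. For the conjugation statement, $\operatorname{Av}$ commutes with complex conjugation, because $\mathrm{vol}_X$ and a real coframe are real and $\overline{\int f}=\int\bar f$. Thus $\overline{Q^j}=\ker(\operatorname{Av}|_{\overline{U^j(X)}})$, and $\overline{U^j(X)}=U^{-j}(X)$ by Lemma~\ref{lem:grading} applied pointwise. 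This yields $\overline{Q^j}=Q^{-j}$.
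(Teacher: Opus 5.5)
Your proposal is correct and follows essentially the same route as the paper. It uses the unimodularity and Stokes argument giving $\int_X X_a(f)\,\mathrm{vol}_X=0$, hence $\operatorname{Av}\d=\d\operatorname{Av}$; then the identity $\operatorname{Av}(f\alpha)=\bigl(\int_X f\,\mathrm{vol}_X\bigr)\alpha$ for invariant $\alpha$ (which is what your basis-independence remark amounts to) to get commutation with $\operatorname{pr}_j$, hence with $\partial$ and $\db$; and finally the reality of $\operatorname{Av}$ for $\overline{Q^j}=Q^{-j}$. The opening remark about right translations plays no role and can simply be dropped.
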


\begin{proof}
Let $X_1,\ldots,X_{2n}$ be the invariant frame dual to
$\xi^1,\ldots,\xi^{2n}$. Then
\[
\d(f\xi^I)=\sum_aX_a(f)\,\xi^a\wedge\xi^I+f\,\d\xi^I,
\]
and $\d\xi^I$ is invariant. By unimodularity,
$\mathcal L_{X_a}\mathrm{vol}_X=-\tr(\ad_{X_a})\mathrm{vol}_X=0$, so
Stokes' theorem gives
\[
\int_XX_a(f)\,\mathrm{vol}_X
=\int_X\d\iota_{X_a}(f\,\mathrm{vol}_X)=0 .
\]
Hence
$\operatorname{Av}\d(f\xi^I)=\bigl(\int_Xf\,\mathrm{vol}_X\bigr)\d\xi^I
=\d\operatorname{Av}(f\xi^I)$.
By~\eqref{eq:invariant-frame},
$\operatorname{Av}(f\alpha)=\bigl(\int_Xf\,\mathrm{vol}_X\bigr)\alpha$
for $\alpha\in\UU^j$. Thus $\operatorname{Av}$ maps $U^j(X)$ onto
$\UU^j$ and commutes with the projections $\operatorname{pr}_j$. By
\eqref{eq:projected-differentials}, it commutes with $\partial$ and
$\db$. Since $\operatorname{Av}$ is a projection onto $\UU^\bullet$, the
decomposition follows. The last assertion follows from the reality of
$\operatorname{Av}$ and $\overline{U^j(X)}=U^{-j}(X)$.
\end{proof}

\begin{proposition}[{\cite{ACK}*{Corollary 5.2}}]\label{prop:reduction-to-dbar}
For each of $GH_{\db}$, $GH_\partial$, $GH_{BC}$ and $GH_A$, the
inclusion of invariant forms induces an injection
$GH^j(\g_A)\hookrightarrow GH^j(X)$. Its cokernel is the corresponding
cohomology of $Q^\bullet$ in degree $j$. If $H_{\db}(Q^\bullet)=0$, all
four injections are isomorphisms.
\end{proposition}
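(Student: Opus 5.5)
The plan is to use Lemma~\ref{lem:averaging} directly. The splitting \eqref{eq:averaging-splitting}
\[
U^\bullet(X)=\UU^\bullet\oplus Q^\bullet
\]
is a decomposition of bi-differential complexes: $\operatorname{Av}$ commutes with $\partial$ and $\db$, so both $\UU^\bullet$ and $Q^\bullet=\ker\operatorname{Av}$ are preserved by $\partial$, $\db$ and hence by $\partial\db$.

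Each of the four cohomologies is built functorially from kernels and images of $\partial$, $\db$, $\partial\db$ on a given degree. All of these operators are block diagonal for the splitting, so every kernel and every image (and every sum or intersection of them) splits as the direct sum of its $\UU$-part and its $Q$-part. For instance,
\[
\ker\partial\cap\ker\db=(\ker\partial\cap\ker\db\cap\UU^j)\oplus(\ker\partial\cap\ker\db\cap Q^j),
\]
and
\[
\partial(U^{j-1}(X))+\db(U^{j+1}(X))=(\partial\UU^{j-1}+\db\UU^{j+1})\oplus(\partial Q^{j-1}+\db Q^{j+1}).
\]
Hence
\[
GH^j_\star(X)\cong GH^j_\star(\g_A)\oplus GH^j_\star(Q^\bullet),\qquad \star\in\{\db,\partial,BC,A\},
\]
with the first summand being the image of the inclusion. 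This gives injectivity and identifies the cokernel with the $Q$-cohomology.

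For the last claim, assume $H_{\db}(Q^\bullet)=0$. Since conjugation maps $Q^j$ to $Q^{-j}$ and exchanges $\partial$ and $\db$ (as $\d$ is real), we also get $H_\partial(Q^\bullet)=0$. It remains to show that the Bott--Chern and Aeppli cohomologies of the double complex $(Q^\bullet,\partial,\db)$ vanish. This is the step needing an argument. I would run the standard zigzag argument of \cite{ACK}*{Corollary 5.2} / \cite{Angella2019}, which shows that a bounded double complex with vanishing $\partial$- and $\db$-cohomology has vanishing Bott--Chern and Aeppli cohomology.

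For Bott--Chern, take $x\in Q^j$ with $\partial x=\db x=0$. By $\db$-acyclicity, $x=\db y$. Then $\db\partial y=-\partial x=0$, so $\partial y$ is $\db$-closed and $\partial y=\db z$. This $z$ satisfies $\partial\db z=\partial\partial y=0$, so $\db(\partial z)=0$, giving $\partial z=\db w$. One continues this way, and the zigzag terminates because the grading is bounded ($-n\le j\le n$). Alternatively, one uses $\partial$-acyclicity at the appropriate stage to write $\partial y=\partial\db u$-type corrections, concluding that $x\in\im\partial\db$.

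For Aeppli, I would either argue dually in the same way, or invoke the duality between Bott--Chern and Aeppli cohomology. I expect the bookkeeping of this zigzag, ensuring that it closes up using both acyclicities rather than only one, to be the main obstacle. If it becomes messy, I will simply cite the general lemma from \cite{ACK}.
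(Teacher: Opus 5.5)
Your proposal is correct and follows the paper's route: the averaging splitting of Lemma~\ref{lem:averaging} into subcomplexes for both $\partial$ and $\db$ gives injectivity and identifies the cokernels with the cohomologies of $Q^\bullet$, and conjugation together with the algebraic lemma behind \cite{ACK}*{Corollary 5.2} carries $H_{\db}(Q^\bullet)=0$ to the other three cohomologies. Your zigzag for Bott--Chern closes by descending induction on $j$, with base case $j\ge n-1$: $\partial y\in Q^{j+2}$ is $\partial$- and $\db$-closed, so $\partial y=\partial\db e$, hence $y-\db e=\partial w$ and $x=\db y=-\partial\db w$; the Aeppli case is the mirror ascending induction, so you do not need Bott--Chern/Aeppli duality, which would require Hodge-theoretic input on $X$.
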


\subsection{Structures fibered over a complex quotient}\label{subsec:adapted}

To go beyond injectivity, we use structures that split into symplectic
directions along an ideal and complex directions on the quotient. At the
Lie algebra level, they are given by the following construction
of~\cite{PaperI}.

\begin{proposition}[\cite{PaperI}*{Theorem C}]
\label{prop:bundle-construction}
Suppose $\h=\h_c\oplus\mathfrak k$ is a sum of $A$-invariant subspaces
with $\dim\h_c=2k-1$, $\dim\mathfrak k=2\ell$ and $k+\ell=n$ (so
$k\ge1$). Then $\mathfrak k$ is an abelian ideal of $\g_A$, and
$\mathfrak b=\g_A/\mathfrak k$ is the almost abelian Lie algebra
$\g_{A_c}$ with $A_c=A|_{\h_c}$. Assume that $\mathfrak b$ carries an
integrable complex structure $J_B$, with
$\mathfrak b_\C=\mathfrak b^{1,0}\oplus\mathfrak b^{0,1}$, and that a
closed real two-form $\omega$ on $\g_A$ restricts to a symplectic form
$\sigma$ on $\mathfrak k$. Let $\pi:\g_A\to\mathfrak b$ be the quotient
map, let $0\ne\Omega\in\bigwedge^k\mathfrak b^{*1,0}$, and write
$\d_{\mathfrak b}\Omega=\alpha_B\wedge\Omega$ with
$\alpha_B\in\mathfrak b^{*0,1}$. Then
\begin{equation}\label{eq:bundle-spinor}
\rho=e^{\ii\omega}\wedge\pi^*\Omega
\end{equation}
defines an invariant generalized complex structure of type $k$, and
\begin{equation}\label{eq:bundle-character}
\d\rho=c(\pi^*\alpha_B)\rho,\qquad
\chi(X+\xi)=\alpha_B(\pi X)\quad(X+\xi\in\LL).
\end{equation}
Moreover, $A|_{\mathfrak k}\in\mathfrak{sp}(\mathfrak k,\sigma)$.
\end{proposition}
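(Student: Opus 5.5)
The plan is to check the assertions one at a time, working only with invariant forms on $\g_A$. Integrability will come from Proposition~\ref{prop:integrable-differentials}, and the character from~\eqref{eq:chi}.

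\emph{Algebraic assertions.} Since $\mathfrak k\subseteq\h$ and $[\h,\h]=0$, the subspace $\mathfrak k$ is abelian. Because $[e_0,\mathfrak k]=A\mathfrak k\subseteq\mathfrak k$, it is an ideal. The quotient $\mathfrak b$ is spanned by the images of $e_0$ and $\h_c$. The only bracket there is $[e_0,Y]=A_cY$, so $\mathfrak b=\g_{A_c}$, and $\dim\mathfrak b=2k$.

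\emph{Symplectic condition on $\mathfrak k$.} This follows from $\d\omega=0$. For $Y,W\in\mathfrak k$, formula~\eqref{eq:CE-trivial} and $[Y,W]=0$ give
\[
\d\omega(e_0,Y,W)=-\omega(AY,W)+\omega(AW,Y)=-\bigl(\sigma(AY,W)+\sigma(Y,AW)\bigr).
\]
This vanishes, so $A|_{\mathfrak k}\in\mathfrak{sp}(\mathfrak k,\sigma)$.

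\emph{Non-degeneracy of $\rho$.} The spinor~\eqref{eq:bundle-spinor} is already in normal form with $\beta=0$ and $\pi^*\Omega$ decomposable of degree $k$. So I must show $\omega^{\ell}\wedge\pi^*\Omega\wedge\pi^*\overline\Omega\ne0$. I evaluate on a basis $f_1,\dots,f_{2k},y_1,\dots,y_{2\ell}$, where the $f_i$ lift a basis of $\mathfrak b$ and the $y_j$ form a basis of $\mathfrak k$. The form $\pi^*(\Omega\wedge\overline\Omega)$ is a pulled-back top form of $\mathfrak b$. It vanishes as soon as it receives any $y_j$, so it must eat all the $f_i$. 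Hence $\omega^\ell$ is evaluated only on the $y_j$, which gives
\[
\Omega\wedge\overline\Omega(\pi f_1,\dots,\pi f_{2k})\cdot\sigma^\ell(y_1,\dots,y_{2\ell})
\]
up to a nonzero combinatorial factor. Both factors are nonzero: $\Omega$ generates the canonical line of a complex structure, and $\sigma$ is symplectic. Therefore $\rho$ is a non-degenerate pure spinor of type $k$. I expect this bookkeeping to be the only delicate point, and it is mild.

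\emph{Integrability.} Pullback commutes with $\d$, and $\d\omega=0$. Hence
\[
\d\rho=e^{\ii\omega}\wedge\pi^*(\d_{\mathfrak b}\Omega)=e^{\ii\omega}\wedge\pi^*\alpha_B\wedge\pi^*\Omega .
\]
The even form $e^{\ii\omega}$ commutes with $\pi^*\alpha_B$, so this equals $\pi^*\alpha_B\wedge\rho=c(\pi^*\alpha_B)\rho$. Proposition~\ref{prop:integrable-differentials} asks for $z\in\overline\LL$. I will note that its converse argument works for any $z\in E$. Indeed, $c(w)c(v)c(u)c(z)\rho=2\langle u,z\rangle\, c(w)c(v)\rho=0$ for $u,v,w\in\LL$. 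Alternatively, one may replace $z$ by its $\overline\LL$-component, since the $\LL$-component annihilates $\rho$. Either way the structure is integrable.

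\emph{Character.} With $z=\pi^*\alpha_B$, formula~\eqref{eq:chi} gives
\[
\chi(X+\xi)=2\langle\pi^*\alpha_B,X+\xi\rangle=(\pi^*\alpha_B)(X)=\alpha_B(\pi X),
\]
which is~\eqref{eq:bundle-character}.
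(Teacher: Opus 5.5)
Your proof is correct. The paper does not prove this proposition itself; it imports it from \cite{PaperI}*{Theorem C}. Your argument follows the same template the paper uses for the explicit models in Lemma~\ref{lem:model-integrability} and Proposition~\ref{prop:specC}: check normal-form non-degeneracy, write $\d\rho=c(z)\rho$ and invoke Proposition~\ref{prop:integrable-differentials}, then read off $\chi$ from \eqref{eq:chi}.

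One simplification: your detour about $z\in\overline\LL$ is unnecessary. Since $\overline\Omega$ has top degree in $\mathfrak b^{*0,1}$, we have $\alpha_B\wedge\overline\Omega=0$, hence $c(\pi^*\alpha_B)\bar\rho=0$. Therefore $\pi^*\alpha_B\in\Ann(\bar\rho)=\overline\LL$ already, and Proposition~\ref{prop:integrable-differentials} applies as stated.
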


\begin{lemma}\label{lem:L-extension}
In the setting of Proposition~\ref{prop:bundle-construction}, regard
$J_B$ as a generalized complex structure on $\mathfrak b$. Its eigenspace
and canonical line are
\[
\LL_B=\mathfrak b^{0,1}\oplus\mathfrak b^{*1,0},\qquad
K_{\mathfrak b}=\bigwedge^k\mathfrak b^{*1,0}=\C\Omega,
\]
and by~\eqref{eq:chi} its canonical character is
$\chi_B(Y+\vartheta)=\alpha_B(Y)$. Put
\[
\mathcal B(X+\xi)=X+\xi-\ii\iota_X\omega,\qquad
\mathfrak S=\mathcal B(\mathfrak k_\C)
=\{u-\ii\iota_u\omega:u\in\mathfrak k_\C\}.
\]
Then the following hold.
\begin{enumerate}
\item $\mathcal B$ is an automorphism of the Lie algebra
$\left(E=(\g\oplus\g^*)_\C,\br\cdot\cdot\right)$, and $\LL=\mathcal B(\LL_0)$, where
\[
\LL_0=\{X+\pi^*\vartheta:\ \pi X\in\mathfrak b^{0,1},\
\vartheta\in\mathfrak b^{*1,0}\}.
\]
\item The map $\varpi:\LL\to\LL_B$ given by
$\varpi\bigl(\mathcal B(X+\pi^*\vartheta)\bigr)=\pi X+\vartheta$ is a
surjective Lie algebra homomorphism with kernel $\mathfrak S$. The ideal
$\mathfrak S$ is abelian and is contained in the ideal $\mathfrak n$ of
Proposition~\ref{thm:L-almost-abelian}.
\item $\chi=\chi_B\circ\varpi$, and $\Omega\mapsto
e^{\ii\omega}\wedge\pi^*\Omega$ is an isomorphism of $\LL$-modules
$\varpi^*K_{\mathfrak b}\cong K$.
\item For $u\in\mathfrak k_\C$, $\vartheta\in\mathfrak b^{*1,0}$, and
$\widetilde Y\in\g_{A,\C}$ with $\pi\widetilde Y\in\mathfrak b^{0,1}$,
\[
\br{\mathcal B\widetilde Y}{\mathcal Bu}
=e^0(\widetilde Y)\,\mathcal B(Au),\qquad
\br{\mathcal B\pi^*\vartheta}{\mathcal Bu}=0 .
\]
\end{enumerate}
\end{lemma}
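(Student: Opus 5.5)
The plan is to derive everything from the $B$-field transformation $\mathcal B=e^{-\ii\omega}$ and the identity already recorded in the proof of Proposition~\ref{prop:specC}:
\[
c(\mathcal B v)(e^{\ii\omega}\wedge\alpha)=e^{\ii\omega}\wedge c(v)\alpha .
\]

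For (1), recall the standard fact that $X+\xi\mapsto X+\xi-\iota_X\gamma$ preserves the invariant Courant bracket~\eqref{eq:cotangent-bracket} exactly when $\d\gamma=0$. I would verify this directly: the defect is $\iota_{[X,Y]}\gamma-\ad_X^*\iota_Y\gamma+\ad_Y^*\iota_X\gamma$, and evaluated on $W$ it equals $-\d\gamma(X,Y,W)$. Here $\gamma=\ii\omega$ is closed, so $\mathcal B$ is an automorphism. The displayed identity then gives
\[
\Ann(e^{\ii\omega}\wedge\pi^*\Omega)=\mathcal B\bigl(\Ann(\pi^*\Omega)\bigr).
\]
Since $\Omega$ is decomposable of top degree on $\mathfrak b^{*1,0}$, $\Ann(\pi^*\Omega)=\{X+\xi:\iota_X\pi^*\Omega=0,\ \xi\wedge\pi^*\Omega=0\}$. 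This set is exactly $\LL_0$: the condition $\iota_X\pi^*\Omega=0$ means $\pi X\in\mathfrak b^{0,1}$ (this includes all $X\in\mathfrak k_\C$), and $\xi\wedge\pi^*\Omega=0$ means $\xi\in\pi^*\mathfrak b^{*1,0}$. A dimension count confirms it: $(2n-k)+k=2n$.

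For (2), consider the map $\LL_0\to\LL_B$, $X+\pi^*\vartheta\mapsto\pi X+\vartheta$. Its kernel is $\mathfrak k_\C$, so it is surjective with kernel of dimension $2\ell$. To see that it is a homomorphism, use~\eqref{eq:cotangent-bracket}. The vector part is compatible because $\pi$ is a Lie algebra map. For the covector part, $\ad^*_X\pi^*\vartheta=\pi^*\ad^*_{\pi X}\vartheta$, since $\mathfrak k$ is an ideal. Composing with $\mathcal B^{-1}$ yields $\varpi$, with kernel $\mathfrak S=\mathcal B(\mathfrak k_\C)$. The ideal $\mathfrak S$ is abelian: $\mathfrak k$ is abelian, and for $u,v\in\mathfrak k_\C$ one has $\br uv=0$, since $[\mathfrak k,\mathfrak k]=0$ and there are no covector parts. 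Applying the automorphism $\mathcal B$ preserves this. Finally, $\mathfrak S\subset\mathfrak n$ because $e^0$ vanishes on $\mathfrak k\subset\h$.

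For (3), apply $\pi^*$ to $\d_{\mathfrak b}\Omega=\alpha_B\wedge\Omega$ and use $\d\omega=0$ to get $\d\rho=e^{\ii\omega}\wedge\pi^*\alpha_B\wedge\pi^*\Omega$. Since $\iota$ of the covector is zero, this can be rewritten as $c(\mathcal B\pi^*\alpha_B)\rho=c(\pi^*\alpha_B)\rho$, which is~\eqref{eq:bundle-character}. Then~\eqref{eq:chi} gives
\[
\chi(v)=2\langle\pi^*\alpha_B,v\rangle=\alpha_B(\pi X).
\]
For $v=\mathcal B(X+\pi^*\vartheta)$ the vector part is $X$, so $\chi(v)=\chi_B(\varpi v)$. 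The line map $\Omega\mapsto\rho$ is then $\LL$-equivariant, because both actions are multiplication by the same scalar.

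For (4), I would compute $\br{\widetilde Y}{u}$ in $E$ and transport it by $\mathcal B$. For $u\in\mathfrak k_\C$ we have $[\widetilde Y,u]=e^0(\widetilde Y)Au$, since the $\h$-component of $\widetilde Y$ commutes with $u$. Likewise $\br{\pi^*\vartheta}{u}=\ad_u^*\pi^*\vartheta$ up to sign, and this vanishes because $\pi^*\vartheta$ kills $[\g_A,\mathfrak k]\subset\mathfrak k$. Applying $\mathcal B$ then gives both formulas.

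The main obstacle is the sign and normalization bookkeeping: the cocycle check in (1) with the paper's convention $(\ad_X^*\eta)(Y)=-\eta([X,Y])$, and the factor $2$ in the pairing in (3). I would settle both by evaluating on basis vectors.
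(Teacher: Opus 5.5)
Your proof is correct. Parts (2) and (4) follow the paper's argument essentially verbatim, but parts (1) and (3) take a different route.

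\textbf{Part (1).} You verify bracket preservation by computing the defect $\br{\mathcal Bu}{\mathcal Bv}-\mathcal B\br uv$ and identifying it with $-\d\gamma(X,Y,\cdot)$. Your sign check is right. The paper avoids coordinates altogether. Because $\omega$ is closed, $e^{\ii\omega}\wedge$ commutes with $\d$, and conjugation by it carries $c(v)$ to $c(\mathcal Bv)$, hence $D_v$ to $D_{\mathcal Bv}$. Then \eqref{eq:D-identities} and injectivity of $c$ on $E$ give $c(\br{\mathcal Bu}{\mathcal Bv})=c(\mathcal B\br uv)$, with no coadjoint sign bookkeeping.

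\textbf{Part (3).} You compute $\d\rho=c(\pi^*\alpha_B)\rho$ directly and read $\chi$ off \eqref{eq:chi}. Equivariance on the line then follows because both actions are scalars and the characters agree. The paper instead proves $D_v\pi^*=\pi^*D_{\varpi\mathcal Bv}$ for $v\in\LL_0$ and conjugates by $e^{\ii\omega}$. This gives the operator identity $D_{\mathcal Bv}(e^{\ii\omega}\wedge\pi^*\beta)=e^{\ii\omega}\wedge\pi^*(D_{\varpi\mathcal Bv}\beta)$ for every form $\beta$ on $\mathfrak b$, not just for $\Omega$.

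\textbf{What each route buys.} Your version is more elementary. It makes explicit that closedness of $\omega$ is exactly the condition needed in (1), and it rederives \eqref{eq:bundle-character} rather than quoting it. The paper's version reuses the operator calculus of Section~\ref{sec:spinors} and yields an intertwining of operators rather than a comparison of two scalars. That operator form is what the paper later invokes pointwise to show that \eqref{eq:canonical-pullback} is a module isomorphism for arbitrary, non-invariant sections of $K_B$. For the lemma as stated, your scalar comparison is enough.
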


\begin{proof}
The identity used in the proof of Proposition~\ref{prop:specC}, applied
to the closed two-form $\ii\omega$, gives
$c(\mathcal Bv)(e^{\ii\omega}\wedge\beta)=e^{\ii\omega}\wedge c(v)\beta$
for every form $\beta$. Since $\d\omega=0$, the operator
$e^{\ii\omega}\wedge$ commutes with $\d$. By~\eqref{eq:Cartan-operator},
conjugation by $e^{\ii\omega}\wedge$ sends $D_u$ to $D_{\mathcal Bu}$,
and~\eqref{eq:D-identities} gives
$c(\br{\mathcal Bu}{\mathcal Bv})=c(\mathcal B\br uv)$. As $c$ is
injective on $E$, $\mathcal B$ preserves the bracket. The same identity
gives $\LL=\Ann(\rho)=\mathcal B\bigl(\Ann(\pi^*\Omega)\bigr)$. The two
components of $c(X+\xi)\pi^*\Omega$ have different degrees. Hence $X+\xi$
annihilates $\pi^*\Omega$ exactly when $\iota_{\pi X}\Omega=0$ and
$\xi\wedge\pi^*\Omega=0$, that is, when $\pi X\in\mathfrak b^{0,1}$ and
$\xi\in\pi^*\mathfrak b^{*1,0}$. This proves the first part.

Since $\pi$ is a homomorphism,
$(\ad_X^*\pi^*\vartheta)(W)=-\vartheta([\pi X,\pi W])$, that is,
$\ad_X^*\pi^*\vartheta=\pi^*\ad_{\pi X}^*\vartheta$.
By~\eqref{eq:cotangent-bracket}, the map
$X+\pi^*\vartheta\mapsto\pi X+\vartheta$ is therefore a homomorphism
$\LL_0\to\LL_B$. It is surjective, and its kernel is $\mathfrak k_\C$
because $\pi^*$ is injective. Composing with $\mathcal B^{-1}$ gives
$\varpi$. The ideal $\mathfrak S$ is abelian because $\mathfrak k_\C$ is.
Since $\mathcal B$ does not change vector parts,
$\phi(\mathcal Bu)=e^0(u)=0$ for $u\in\mathfrak k_\C$, so
$\mathfrak S\subseteq\mathfrak n$.

For $v=X+\pi^*\vartheta\in\LL_0$ we have
$c(v)\pi^*=\pi^*c(\pi X+\vartheta)$ and $\d\pi^*=\pi^*\d_{\mathfrak b}$,
hence $D_v\pi^*=\pi^*D_{\varpi\mathcal Bv}$. Together with the first
part,
\[
D_{\mathcal Bv}(e^{\ii\omega}\wedge\pi^*\Omega)
=e^{\ii\omega}\wedge\pi^*\bigl(D_{\varpi\mathcal Bv}\Omega\bigr)
=\chi_B(\varpi\mathcal Bv)\,e^{\ii\omega}\wedge\pi^*\Omega .
\]
By~\eqref{eq:character}, this proves the third part.

Finally, $u\in\h_\C$ and $[\h,\h]=0$ give
$\br{\widetilde Y}{u}=[\widetilde Y,u]=e^0(\widetilde Y)Au$.
By~\eqref{eq:cotangent-bracket} and the formula above,
$\br{\pi^*\vartheta}{u}=-\ad_u^*\pi^*\vartheta
=-\pi^*\ad^*_{\pi u}\vartheta=0$. Applying the automorphism $\mathcal B$
proves the last part.
\end{proof}

\subsection{The torus bundle and its coefficient systems}
\label{subsec:torus-bundle}

Keep the setting of Proposition~\ref{prop:bundle-construction}. The
subgroup $G_{\mathfrak k}=\{0\}\times\mathfrak k$ of $G_A$ is closed
normal and abelian with Lie algebra $\mathfrak k$. The quotient
$G_{\mathfrak b}=G_A/G_{\mathfrak k}=\R\ltimes_{e^{tA_c}}\h_c$ is the
simply connected group with Lie algebra $\mathfrak b$. Let
$\bar\pi:G_A\to G_{\mathfrak b}$ be the quotient map, and assume
\begin{equation}\label{eq:lattice-hypotheses}
\Gamma_{\mathfrak k}=\Gamma\cap G_{\mathfrak k}\text{ is a lattice in }
G_{\mathfrak k},
\qquad
\Gamma_{\mathfrak b}=\bar\pi(\Gamma)\text{ is a lattice in }
G_{\mathfrak b}.
\end{equation}
Then
\begin{equation}\label{eq:torus-bundle}
\mathbb T=\Gamma_{\mathfrak k}\backslash G_{\mathfrak k}\longrightarrow
X\xrightarrow{\ p\ }B=\Gamma_{\mathfrak b}\backslash G_{\mathfrak b}
\end{equation}
is a fiber bundle, with fibers $2\ell$-dimensional tori, over a compact
complex manifold $B$ of complex dimension $k$, whose complex structure is
induced by $J_B$. The form $\omega$ descends to a closed form on $X$ that
restricts to an invariant symplectic form on every fiber. Thus
\eqref{eq:torus-bundle} is a symplectic torus bundle over a complex base.
Invariant forms on $\mathfrak b$ descend to $B$, and we denote them by the same
symbols. In particular, $e^0$ vanishes on $\mathfrak k$ and defines a
closed one-form on $B$.

The invariant form $\Omega$ is a nowhere vanishing section of the
canonical bundle $K_B$, and $\rho=e^{\ii\omega}\wedge p^*\Omega$
generates $K_X$. Hence
\begin{equation}\label{eq:canonical-pullback}
p^*K_B\xrightarrow{\ \sim\ }K_X,\qquad
p^*s\longmapsto e^{\ii\omega}\wedge p^*s .
\end{equation}
Let $L_B=T^{0,1}B\oplus T^{*1,0}B$ be the Lie algebroid of the complex
structure of $B$. Its canonical module $K_B$ has connection
$\nabla^{K_B}_us=c(u)\d s$; this is the holomorphic structure along
$T^{0,1}B$, and it vanishes along $T^{*1,0}B$. The pointwise identities
of Lemma~\ref{lem:L-extension} show that~\eqref{eq:canonical-pullback}
is an isomorphism of modules. For vertical $Y$,
$\nabla^K_{\mathcal BY}(e^{\ii\omega}\wedge p^*s)=0$. If $\widetilde u$
is a projectable lift of a section $u$ of $L_B$, meaning that its vector
part is $p$-related to that of $u$ and its covector part is pulled back
from $B$, then
\[
\nabla^K_{\mathcal B\widetilde u}(e^{\ii\omega}\wedge p^*s)
=e^{\ii\omega}\wedge p^*\bigl(\nabla^{K_B}_us\bigr).
\]

\medskip

Let $M$ be a finite-dimensional $\mathfrak b^{0,1}$-module. Choose a
basis $\bar Y_1,\ldots,\bar Y_k$ of $\mathfrak b^{0,1}$ with dual basis
$\bar\xi^1,\ldots,\bar\xi^k$ of $\mathfrak b^{*0,1}$. Let $\mathcal M$
be the trivial bundle $B\times M$ with
\[
\db_{\mathcal M}(f\otimes m)=\db_Bf\otimes m
+f\sum_{a=1}^k\bar\xi^a\otimes\bar Y_a\cdot m,
\]
where $\db_B$ is the Dolbeault operator of $B$. Since $M$ is a module
and $\mathfrak b^{0,1}$ is a subalgebra, $\db_{\mathcal M}^2=0$, so
$\mathcal M$ is a holomorphic vector bundle. Its smooth $(a,b)$-forms
are $C^\infty(B,\C)\otimes\bigwedge^a\mathfrak b^{*1,0}\otimes
\bigwedge^b\mathfrak b^{*0,1}\otimes M$. The invariant ones form the
subcomplex
\[
\bigwedge^a\mathfrak b^{*1,0}\otimes\bigwedge^\bullet\mathfrak b^{*0,1}
\otimes M
=C^\bullet\bigl(\mathfrak b^{0,1},
\textstyle\bigwedge^a\mathfrak b^{*1,0}\otimes M\bigr),
\]
on which $\db_{\mathcal M}$ is the differential~\eqref{eq:CE}. Here
$\mathfrak b^{0,1}$ acts on $\mathfrak b^{*1,0}$ by the coadjoint
action, which preserves $\mathfrak b^{*1,0}$ because $\mathfrak b^{0,1}$
is a subalgebra. We write
\begin{equation}\label{eq:coefficient-Dolbeault}
H^{a,b}_{\db}(\mathfrak b;M)
=H^b\bigl(\mathfrak b^{0,1},
\textstyle\bigwedge^a\mathfrak b^{*1,0}\otimes M\bigr).
\end{equation}
The inclusion of invariant forms and the Dolbeault isomorphism give the
comparison map
\begin{equation}\label{eq:comparison-map}
H^{a,b}_{\db}(\mathfrak b;M)\longrightarrow
H^b(B,\Omega^a_B\otimes\mathcal M).
\end{equation}
For $M=\C$, this is the usual comparison between invariant and ordinary
Dolbeault cohomology of $B$.

\medskip

Let $\mathcal H^q \to B$ be the flat bundle with fiber
$H^q(p^{-1}(y);\C)$ at $y\in B$ and Gauss--Manin connection $\nabla$. Flatness
makes $\nabla^{0,1}$ a holomorphic structure. The invariant model is
\[
V^q=\bigwedge^q\mathfrak k_\C^*\cong H^q(\mathfrak k_\C;\C).
\]
For $Y\in\mathfrak b_\C$ with a lift $\widetilde Y\in\g_{A,\C}$, put
\begin{equation}\label{eq:fiber-action}
(Y\cdot\eta)(u_1,\ldots,u_q)
=-\sum_{j=1}^q\eta(u_1,\ldots,[\widetilde Y,u_j],\ldots,u_q)
=-e^0(Y)\,(\Psi_{\mathfrak k}\eta)(u_1,\ldots,u_q).
\end{equation}
Here $\Psi_{\mathfrak k}$ is the derivation of
$\bigwedge^\bullet\mathfrak k_\C^*$ extending
$\eta\mapsto\eta\circ A|_{\mathfrak k}$, as
in~\eqref{eq:Psi-expanded}. The second equality holds because
$[\widetilde Y,u]=e^0(\widetilde Y)Au$ for $u\in\mathfrak k_\C$. In
particular, the action does not depend on the lift. Restricted to
$\mathfrak b^{0,1}$, it makes $V^q$ a $\mathfrak b^{0,1}$-module. By the
last part of Lemma~\ref{lem:L-extension}, this is the action of $\LL_B$
on $\bigwedge^q\mathfrak S^*\cong V^q$ induced by the bracket of $\LL$,
with $\mathfrak b^{*1,0}$ acting trivially. We write $\mathcal V^q$ for
the corresponding holomorphic bundle on $B$.

\begin{lemma}\label{lem:GM}
Extend $\eta\in V^q$ to $\g_{A,\C}$ by zero on $\R e_0\oplus\h_c$. The
resulting invariant form restricts to a closed form on each fiber of
$p$, and its fiberwise class $s_\eta$ is a smooth section of
$\mathcal H^q$. As $\eta$ runs over a basis of $V^q$, these sections
form a frame. For $Y\in\mathfrak b_\C$ and its invariant vector field
$Y^L$ on $B$,
\begin{equation}\label{eq:GM-frame}
\nabla_{Y^L}(fs_\eta)=Y^L(f)\,s_\eta+f\,s_{Y\cdot\eta}.
\end{equation}
Consequently, $\eta\mapsto s_\eta$ induces an isomorphism of holomorphic
bundles $\mathcal V^q\cong(\mathcal H^q,\nabla^{0,1})$.
\end{lemma}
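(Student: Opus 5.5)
The proof will use explicit coordinates on $X$ and a direct computation of the Gauss--Manin connection in the invariant frame. Write points of $G_A=\R\ltimes_{e^{tA}}\h$ as $(t,Y)$. Since $\mathfrak k$ is $A$-invariant and $\h=\h_c\oplus\mathfrak k$, the fiber of $p$ through a point is the left coset $g\,G_{\mathfrak k}$, modulo $\Gamma$.

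\emph{Step 1: closedness on fibers.} Extend $\eta$ by zero to $\tilde\eta$ on $\g_{A,\C}$, viewed as an invariant form. Its restriction to a fiber is obtained by pulling back along left translation $G_{\mathfrak k}\to gG_{\mathfrak k}$. This pullback is the constant-coefficient form $\eta$ on the abelian group $G_{\mathfrak k}$, because left-invariant forms pull back to left-invariant forms and $\mathfrak k$ is a subalgebra. Hence $\tilde\eta$ is closed on every fiber, and its fiberwise class is $[\eta]$ under the identification $H^q(\mathbb T;\C)\cong\bigwedge^q\mathfrak k_\C^*$ given by left translation. These identifications vary smoothly with the base point, so $s_\eta$ is smooth. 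Taking $\eta$ through a basis of $V^q$ gives a basis of the fiber cohomology at every point, so the $s_\eta$ form a frame.

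\emph{Step 2: the Gauss--Manin derivative.} I will use the standard Cartan-type description: if $\tilde\alpha$ is a global form on $X$ whose restriction to each fiber is closed, and $\widetilde Y$ is any lift of a vector field $Y$ on $B$, then $\nabla_Y[\tilde\alpha|_{\mathrm{fib}}]=[(\mathcal L_{\widetilde Y}\tilde\alpha)|_{\mathrm{fib}}]$. Take $\widetilde Y$ to be the invariant field of a lift of $Y\in\mathfrak b_\C$. This field is projectable, since $\bar\pi$ is a homomorphism. For invariant forms, $\mathcal L_{\widetilde Y}\tilde\eta$ is the invariant form $-\tilde\eta([\widetilde Y,\cdot],\ldots)$ summed over the slots.

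Now restrict to vertical vectors $u_j\in\mathfrak k_\C$. The bracket gives $[\widetilde Y,u_j]=e^0(\widetilde Y)Au_j\in\mathfrak k_\C$, so the restriction is exactly $Y\cdot\eta$ as in \eqref{eq:fiber-action}. The Leibniz rule in $f$ then gives \eqref{eq:GM-frame}.

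\emph{Step 3: holomorphic structures.} Restrict \eqref{eq:GM-frame} to $Y\in\mathfrak b^{0,1}$. This says that $\nabla^{0,1}(\sum f_\eta s_\eta)$ corresponds, under the frame, to $\db_B f\otimes\eta+f\sum_a\bar\xi^a\otimes\bar Y_a\cdot\eta$. That is precisely $\db_{\mathcal V^q}$, so the frame map is a bundle isomorphism intertwining the $\db$-operators.

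\emph{Main obstacle.} The main difficulty is justifying Step 2 rigorously. The description of the Gauss--Manin connection via Lie derivatives along lifts requires checking two things: that the result is independent of the lift, and that it is independent of the representative. For the lift, changing it by a vertical field $V$ alters the answer by $\mathcal L_V\tilde\alpha=\d\iota_V\tilde\alpha+\iota_V\d\tilde\alpha$. Restricted to the fiber, the first term is fiberwise exact and the second vanishes, because $\d\tilde\alpha$ restricted is zero and $\iota_V$ commutes with restriction for vertical $V$. For the representative, the flow of a projectable lift maps fibers to fibers, and Lie derivatives of fiberwise-exact forms are fiberwise exact. I would state this description as the definition of the connection, equivalently parallel transport through the flows of lifts, and verify these two invariances briefly.
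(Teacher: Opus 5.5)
Your proposal is correct and follows the paper's own argument: you restrict the extended invariant form to the torus fibers, represent the Gauss--Manin derivative by the fiberwise restriction of $\mathcal L_{\widetilde Y}\tilde\eta$ for an invariant lift $\widetilde Y$, evaluate on $u_j\in\mathfrak k_\C$ using $[\widetilde Y,u_j]=e^0(\widetilde Y)Au_j$ to recover \eqref{eq:fiber-action}, apply the Leibniz rule, and compare with $\db_{\mathcal V^q}$ for $Y\in\mathfrak b^{0,1}$. The only difference is that you also check that this description of $\nabla$ is independent of the lift and of the representative, which the paper takes as standard; since a complex $Y$ has no flow, that flow argument should be applied to its real and imaginary parts separately.
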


\begin{proof}
The fiber through $\Gamma g$ is the image of $gG_{\mathfrak k}$, and
the extended form restricts there to an invariant form on a torus. Such
forms are closed and represent each cohomology class exactly once. This
proves the first two assertions. The Gauss--Manin derivative
$\nabla_Y[\tilde s]$ is represented by the fiberwise restriction of
$\mathcal L_{\widetilde Y}\tilde s$, for any lift $\widetilde Y$ of $Y$.
Take the invariant vector field of a lift $\widetilde Y\in\g_{A,\C}$.
On invariant arguments $u_j\in\mathfrak k_\C$, the invariant form
$\mathcal L_{\widetilde Y}\eta$ takes the value given
by~\eqref{eq:fiber-action}. The Leibniz rule gives~\eqref{eq:GM-frame}.
Taking $Y\in\mathfrak b^{0,1}$ and comparing with $\db_{\mathcal V^q}$
proves the last assertion.
\end{proof}

\subsection{The two spectral sequences}\label{subsec:spectral}

\begin{proposition}\label{prop:spectral-sequences}
Assume that $\mathcal J$ is given by~\eqref{eq:bundle-spinor} and that
$\Gamma$ satisfies~\eqref{eq:lattice-hypotheses}. Consider the filtration
\begin{equation}\label{eq:filtration}
F^pC^m=\{f\in C^m:\ f(v_1,\ldots,v_m)=0\text{ whenever at least }
m-p+1\text{ arguments lie in }\mathfrak S\}
\end{equation}
of $C^\bullet(\LL,K)$, and its analogue on $C^\bullet(L_X,K_X)$ with
$\mathfrak S$ replaced by $S_X=\mathcal B\bigl(T(X/B)_\C\bigr)$. They
give first-quadrant spectral sequences
\begin{align}
E_2^{p,q}&=H^p\bigl(L_B,K_B\otimes\mathcal H^q\bigr)
\ \Longrightarrow\ H^{p+q}(L_X,K_X)\cong GH_{\db}^{n-p-q}(X),
\label{eq:global-SS}\\
\widetilde E_2^{p,q}&=H^p\bigl(\LL_B,K_{\mathfrak b}\otimes V^q\bigr)
\ \Longrightarrow\ H^{p+q}(\LL,K)\cong GH_{\db}^{n-p-q}(\g_A).
\label{eq:invariant-SS}
\end{align}
Here $L_B$ acts on $K_B\otimes\mathcal H^q$ through the holomorphic
structures along $T^{0,1}B$, and $T^{*1,0}B$ acts trivially. The action
of $\LL_B$ on $K_{\mathfrak b}\otimes V^q$ is its invariant restriction.
The inclusion of invariant cochains is filtered and induces a morphism
$\widetilde E_r^{p,q}\to E_r^{p,q}$. Under the identifications
\begin{equation}\label{eq:second-pages}
E_2^{p,q}\cong\bigoplus_{\substack{0\le a,b\le k\\ a-b=k-p}}
H^b\bigl(B,\Omega^a_B\otimes\mathcal H^q\bigr),\qquad
\widetilde E_2^{p,q}\cong\bigoplus_{\substack{0\le a,b\le k\\ a-b=k-p}}
H^{a,b}_{\db}\bigl(\mathfrak b;V^q\bigr),
\end{equation}
the morphism at the second page is the direct sum of the comparison
maps~\eqref{eq:comparison-map} for $M=V^q$, composed with the
isomorphism of Lemma~\ref{lem:GM}.
\end{proposition}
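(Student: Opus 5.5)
The plan is to construct both spectral sequences as the spectral sequence of the filtration~\eqref{eq:filtration} by the number of arguments in the abelian ideal. We first identify the $E_1$ pages with fiberwise (resp.\ $\mathfrak S$-) cohomology. We then identify the $d_1$ differentials with the coefficient Dolbeault operators of $B$. The invariant case is essentially Hochschild--Serre, and the global case is its Lie algebroid analogue \cite{ACK}. Throughout, the inclusion of invariant cochains is read through the frames of Lemmas~\ref{lem:L-extension} and~\ref{lem:GM}.

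\emph{Invariant sequence.} By Lemma~\ref{lem:L-extension}, $\mathfrak S$ is an abelian ideal with $\LL/\mathfrak S\cong\LL_B$ via $\varpi$. Moreover $\chi=\chi_B\circ\varpi$ vanishes on $\mathfrak S$, so $\mathfrak S$ acts trivially on $K$. The filtration~\eqref{eq:filtration} is the Hochschild--Serre filtration \cite{HS53}. It is finite and first quadrant, so it converges to $H^\bullet(\LL,K)$, which is $GH_{\db}^{n-\bullet}(\g_A)$ by Proposition~\ref{prop:canonical-complex}. Its second page is
\[
\widetilde E_2^{p,q}=H^p\bigl(\LL_B,H^q(\mathfrak S,K)\bigr).
\]
Since $\mathfrak S$ is abelian and acts trivially, $H^q(\mathfrak S,K)=\bigwedge^q\mathfrak S^*\otimes K$. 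The $\LL_B$-action comes from the bracket of $\LL$. By Lemma~\ref{lem:L-extension}(3),(4), and since $\mathcal B$ identifies $\mathfrak S$ with $\mathfrak k_\C$, this module is $K_{\mathfrak b}\otimes V^q$. Here $\mathfrak b^{0,1}$ acts on $V^q$ by~\eqref{eq:fiber-action}, and $\mathfrak b^{*1,0}$ acts trivially. To obtain~\eqref{eq:second-pages} on the invariant side, we would apply Lemma~\ref{lem:grading} and Proposition~\ref{prop:canonical-complex} to the complex structure $J_B$ on $\mathfrak b$, twisted by the $\mathfrak b^{0,1}$-module $V^q$. The isomorphism $F$ sends $C^p(\LL_B,K_{\mathfrak b})$ to $\UU_{\mathfrak b}^{k-p}$. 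For a complex structure, $\UU_{\mathfrak b}^{k-p}=\bigoplus_{a-b=k-p}\bigwedge^a\mathfrak b^{*1,0}\otimes\bigwedge^b\mathfrak b^{*0,1}$, and the generalized $\db$ is the ordinary $\db$. With coefficients in $V^q$, the extra term is $\sum_a\bar\xi^a\otimes\bar Y_a\cdot$, because $\mathfrak b^{*1,0}$ acts trivially. This is exactly the complex~\eqref{eq:coefficient-Dolbeault}.

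\emph{Global sequence.} The subbundle $S_X=\mathcal B(T(X/B)_\C)$ is an abelian ideal of the Lie algebroid $L_X$, with quotient $p^!L_B$. By~\eqref{eq:canonical-pullback}, $\nabla^K$ vanishes along $S_X$ in the frame $\rho$. Consequently $E_0$ with $d_0$ is, fiberwise, the de Rham complex of the torus fibers. The anchor of $\mathcal BY$ is vertical, which is where this identification comes from. The coefficients are sections of $\bigwedge^pL_B^*\otimes K_B$ pulled back from $B$. Since the fibers are tori and their cohomology is represented by invariant forms (Lemma~\ref{lem:GM}), fiberwise cohomology forms the smooth finite-rank bundle $\mathcal H^q$, and we get
\[
E_1^{p,q}=\Gamma\bigl(B,\textstyle\bigwedge^pL_B^*\otimes K_B\otimes\mathcal H^q\bigr).
\]
To identify $d_1$, we would compute on projectable lifts. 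By the displayed formula after~\eqref{eq:canonical-pullback}, $d_1$ is the $L_B$-differential for the connection $\nabla^{K_B}\otimes\nabla^{GM}$ restricted to $T^{0,1}B$. The $T^{*1,0}B$ directions act trivially because their lifts commute with $\mathfrak S$ (Lemma~\ref{lem:L-extension}(4)). Via~\eqref{eq:GM-frame}, this gives $E_2^{p,q}=H^p(L_B,K_B\otimes\mathcal H^q)$. Repeating the pointwise $\UU$-decomposition on $B$ and applying the Dolbeault isomorphism yields the first identification in~\eqref{eq:second-pages}. Convergence to $H^\bullet(L_X,K_X)$ holds because the filtration is finite, and then~\eqref{eq:global-ACK} gives the target.

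\emph{Comparison.} The inclusion of invariant cochains preserves the filtrations because $\mathfrak S$ consists of invariant sections of $S_X$. On $E_1$, it sends the invariant class $\eta\in V^q$ to the section $s_\eta$. Lemma~\ref{lem:GM} then shows that, at $E_2$, the induced map is the inclusion of invariant $V^q$-valued forms into $\mathcal V^q\cong\mathcal H^q$-valued forms, that is, the comparison map~\eqref{eq:comparison-map}. The main obstacle is the global $E_1$ step. There one has to verify rigorously that vertical cohomology of the infinite-dimensional complex is the space of smooth sections of $\mathcal H^q$ (fiberwise Hodge theory on the torus bundle with smooth dependence on the base), and that $d_1$ is exactly the Gauss--Manin $\db$-operator twisted by $K_B$. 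I would handle this by citing the Lie algebroid spectral sequence of \cite{ACK} and checking its hypotheses in the invariant frame.
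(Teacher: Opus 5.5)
Your proposal is correct and is essentially the paper's proof. The ingredients match: Hochschild--Serre for the abelian ideal $\mathfrak S$ with $\chi|_{\mathfrak S}=0$, the Lie algebroid spectral sequence of \cite{ACK} with $d_1$ read off from projectable lifts, and Lemma~\ref{lem:GM} to identify the morphism at $E_1$ and $E_2$. The one variation is harmless. You obtain~\eqref{eq:second-pages} by applying $F$ to $J_B$ and adding the twist $\sum_a\bar\xi^a\otimes\bar Y_a\cdot$; this is valid because $F_{p+1}(c(w)\alpha)=2\langle\cdot,w\rangle\wedge F_p(\alpha)$. The paper instead splits $\LL_B=\mathfrak b^{0,1}\ltimes\mathfrak b^{*1,0}$ and uses the contraction $\bigwedge^r(\mathfrak b^{*1,0})^*\otimes K_{\mathfrak b}\cong\bigwedge^{k-r}\mathfrak b^{*1,0}$.

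One phrase needs fixing. $S_X$ is a subalgebroid of $L_X$ with vertical anchor, not an abelian ideal: its anchor is nonzero, so the Leibniz rule rules out being an ideal, and only the invariant frame $\mathfrak S$ is abelian. Also, the quotient is $L_X/S_X\cong p^*L_B$, not $p^!L_B$. The coefficients are pulled back from $B$ because the bracket of a vertical section with a projectable one is vertical, as the paper notes.
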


\begin{proof}
By Lemma~\ref{lem:L-extension},
$\mathfrak S$ is an abelian ideal of $\LL$ and
$\chi|_{\mathfrak S}=\chi_B\circ\varpi|_{\mathfrak S}=0$. The
filtration~\eqref{eq:filtration} is the Hochschild--Serre filtration of
the ideal $\mathfrak S$. The resulting spectral sequence has
\[
\widetilde E_1^{p,q}
=H^q\bigl(\mathfrak S,\textstyle\bigwedge^p\LL_B^*\otimes K\bigr),
\qquad
\widetilde E_2^{p,q}=H^p\bigl(\LL_B,H^q(\mathfrak S,K)\bigr).
\]
Since $\mathfrak S$ is abelian and acts trivially on $K$ and on
$\LL/\mathfrak S$, the differentials of its cochain complexes vanish.
Hence $H^q(\mathfrak S,K)=\bigwedge^q\mathfrak S^*\otimes K$. The map
$\mathcal B$ identifies $\bigwedge^q\mathfrak S^*$ with $V^q$, and
Lemma~\ref{lem:L-extension} identifies the induced $\LL_B$-action
with~\eqref{eq:fiber-action}, where $\mathfrak b^{*1,0}$ acts trivially.
The same lemma identifies $K$ with $\varpi^*K_{\mathfrak b}$. This
gives~\eqref{eq:invariant-SS}, and~\eqref{eq:ACK} identifies the
abutment.

The form $\omega$ is closed on $X$, so
$\mathcal B$ is an automorphism of the complexified Courant bracket on
sections \cite{Gualtieri2011}. Pointwise, the first part of
Lemma~\ref{lem:L-extension} gives $L_X=\mathcal B(L_{0,X})$, where
\[
L_{0,X}=\{Y+\xi:\ p_*Y\in T^{0,1}B,\ \xi\in p^*T^{*1,0}B\}.
\]
The vertical bundle $T(X/B)_\C\subset L_{0,X}$ is a subalgebroid.
Projectable sections of $L_{0, X}$ induce the anchor and bracket of $L_B$.
Hence $S_X$ is a subalgebroid of $L_X$ with $L_X/S_X\cong p^*L_B$, and
the differential preserves~\eqref{eq:filtration}. The graded pieces are
\[
\operatorname{gr}^pC^{p+q}
=\Gamma\bigl(X,\textstyle\bigwedge^qS_X^*\otimes
p^*(\bigwedge^pL_B^*\otimes K_B)\bigr)
\]
by~\eqref{eq:canonical-pullback}. The bracket of a vertical vector field
with a projectable section is vertical. By~\eqref{eq:canonical-pullback},
$S_X$ also annihilates the pulled-back frames of $K_X$. Thus $S_X$ acts
on the coefficients by the flat partial connection whose parallel sections
are pulled back from $B$, and $d_0$ is the vertical de Rham differential
with coefficients in $p^*(\bigwedge^pL_B^*\otimes K_B)$. As the fibers
are compact, fiberwise Hodge theory gives
\[
E_1^{p,q}=\Gamma\bigl(B,\textstyle\bigwedge^pL_B^*\otimes K_B
\otimes\mathcal H^q\bigr).
\]
The differential $d_1$ is induced by projectable lifts. Along a lift of
a section of $T^{0,1}B$, it differentiates fiberwise classes by the
Gauss--Manin connection. The lift $\mathcal B(p^*\vartheta)$ of a section
$\vartheta$ of $T^{*1,0}B$ has zero bracket with vertical sections and
acts trivially. Together with~\eqref{eq:canonical-pullback}, this
identifies $(E_1,d_1)$ with $C^\bullet(L_B,K_B\otimes\mathcal H^q)$ and
gives~\eqref{eq:global-SS}. The abutment is identified
by~\eqref{eq:global-ACK}. This is the spectral sequence
of~\cite{ACK}*{Theorem~2.1}, with coefficient bundle $p^*K_B$ and
ordinary vertical degree $q$.

 The elements of $\mathfrak S$ are invariant sections
spanning $S_X$ at every point. Hence invariant cochains in
$F^pC^m(\LL,K)$ span $F^pC^m(L_X,K_X)$ over $C^\infty(X)$, and the
inclusion of invariant cochains is filtered. On the first page, it is the
inclusion of invariant sections, with $\eta\in V^q$ sent to $s_\eta$. By
Lemma~\ref{lem:GM}, it commutes with $d_1$.

 Let $M$ be a $\mathfrak b^{0,1}$-module,
regarded as an $\LL_B$-module on which $\mathfrak b^{*1,0}$ acts
trivially. By~\eqref{eq:cotangent-bracket}, $\LL_B$ is the semidirect
product of the subalgebra $\mathfrak b^{0,1}$ and the abelian ideal
$\mathfrak b^{*1,0}$, on which $\mathfrak b^{0,1}$ acts by the coadjoint
action. The ideal acts trivially on $K_{\mathfrak b}\otimes M$ because
$\chi_B$ vanishes on it. The differential~\eqref{eq:CE} therefore has no
terms coming from the action or the brackets of the ideal, and
\[
C^p(\LL_B,K_{\mathfrak b}\otimes M)
=\bigoplus_{r+b=p}C^b\bigl(\mathfrak b^{0,1},
\textstyle\bigwedge^r(\mathfrak b^{*1,0})^*\otimes K_{\mathfrak b}
\otimes M\bigr)
\]
as complexes. For $\bar Y\in\mathfrak b^{0,1}$ we have
$D_{\bar Y}\Omega=\iota_{\bar Y}\d\Omega=\mathcal L_{\bar Y}\Omega$.
Thus $K_{\mathfrak b}$ is the top exterior power of the coadjoint module
$\mathfrak b^{*1,0}$, and contraction is a $\mathfrak b^{0,1}$-equivariant
isomorphism
$\bigwedge^r(\mathfrak b^{*1,0})^*\otimes K_{\mathfrak b}\cong
\bigwedge^{k-r}\mathfrak b^{*1,0}$. With $a=k-r$, we obtain
\begin{equation}\label{eq:canonical-to-Dolbeault}
H^p(\LL_B,K_{\mathfrak b}\otimes M)\cong
\bigoplus_{a-b=k-p}H^{a,b}_{\db}(\mathfrak b;M).
\end{equation}
The same argument with smooth sections uses the holomorphic isomorphism
$\bigwedge^rT^{1,0}B\otimes K_B\cong\Omega^{k-r}_B$. It identifies
$H^p(L_B,K_B\otimes\mathcal M)$ with
$\bigoplus_{a-b=k-p}H^b(B,\Omega^a_B\otimes\mathcal M)$, compatibly
with~\eqref{eq:comparison-map}. For $M=V^q$ and
$\mathcal M=\mathcal V^q\cong\mathcal H^q$, this
proves~\eqref{eq:second-pages} and the description of the morphism.
\end{proof}

\subsection{The comparison theorem}\label{subsec:comparison}

The two spectral sequences are related by averaging. Transport
$\operatorname{Av}$ to cochains through the maps $F_p$; since these are
$C^\infty(X)$-linear and preserve invariance, this averages the
coefficients of a cochain in an invariant frame. Invariant cochains in
$F^pC^m(\LL,K)$ span $F^pC^m(L_X,K_X)$ over $C^\infty(X)$, so
$\operatorname{Av}$ preserves the filtration~\eqref{eq:filtration}. By
Lemma~\ref{lem:averaging}, $C^\bullet(L_X,K_X)$ is the direct sum of
the filtered subcomplexes of invariant cochains and of cochains
corresponding to $Q^\bullet$. Therefore
\begin{equation}\label{eq:SS-splitting}
E_r^{p,q}=\widetilde E_r^{p,q}\oplus E_r^{p,q}(Q)\qquad(r\ge0),
\end{equation}
where $E_r(Q)$ is the spectral sequence of the filtered complex
corresponding to $Q^\bullet$, which converges to $H_{\db}(Q)$. The
morphism of Proposition~\ref{prop:spectral-sequences} is the inclusion
of the first summand.

\begin{theorem}\label{thm:invariance}
Assume that $\mathcal J$ is given by~\eqref{eq:bundle-spinor} and that
$\Gamma$ satisfies~\eqref{eq:lattice-hypotheses}. Suppose that the
comparison maps
\begin{equation}\label{eq:comparison-hypothesis}
H_{\db}^{a,b}(\mathfrak b;V^q)
\longrightarrow H^b(B,\Omega_B^a\otimes\mathcal H^q)
\end{equation}
are isomorphisms for all $0\le a,b\le k$ and $0\le q\le2\ell$. Then the
inclusion of invariant forms induces isomorphisms
\[
GH_{\db}^j(\g_A)\xrightarrow{\ \sim\ }GH_{\db}^j(X)
\qquad\text{for all }j.
\]
\end{theorem}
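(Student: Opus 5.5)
We will compare the two spectral sequences of Proposition~\ref{prop:spectral-sequences} through the splitting~\eqref{eq:SS-splitting}, and deduce that the complementary summand $Q^\bullet$ is $\db$-acyclic. Proposition~\ref{prop:reduction-to-dbar} then turns this acyclicity into the isomorphisms asserted in the theorem.

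First, by~\eqref{eq:second-pages} and the last assertion of Proposition~\ref{prop:spectral-sequences}, the morphism $\widetilde E_2^{p,q}\to E_2^{p,q}$ is a finite direct sum of the maps~\eqref{eq:comparison-hypothesis}. These are composed with the isomorphism $\mathcal V^q\cong\mathcal H^q$ of Lemma~\ref{lem:GM}. By hypothesis every summand is an isomorphism, so the morphism is an isomorphism on the second page for all $p,q$. Since this morphism is the inclusion of the first summand in~\eqref{eq:SS-splitting}, we get $E_2^{p,q}(Q)=0$ for all $p,q$.

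Next, we check that $E_r(Q)$ is a genuine spectral sequence of a filtered complex. For this we need the averaging projection to be a filtered chain map. This was noted before~\eqref{eq:SS-splitting}: $\operatorname{Av}$ preserves~\eqref{eq:filtration} and commutes with $\db$ by Lemma~\ref{lem:averaging}. Hence $Q^\bullet$, transported through the maps $\widetilde F_p$, is a filtered subcomplex and a direct summand. Its spectral sequence is the complement of $\widetilde E_r$ in $E_r$ at every page.

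The filtration on $C^m(L_X,K_X)$ is finite in each degree: $F^0C^m=C^m$ and $F^{m+1}C^m=0$, and likewise on $Q$. So the spectral sequence of $Q$ is a first-quadrant spectral sequence. It converges to the cohomology of $Q^\bullet$ with respect to $\db$, namely $H_{\db}(Q)$. Since $E_2(Q)=0$, we get $E_\infty(Q)=0$. Finiteness of the filtration then gives $H_{\db}(Q)=0$ in every degree.

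Finally, Proposition~\ref{prop:reduction-to-dbar} identifies the cokernel of $GH^j_{\db}(\g_A)\to GH^j_{\db}(X)$ with $H_{\db}(Q)$ in degree $j$. That map is therefore an isomorphism for all $j$. The main point requiring care is the second step: the splitting must hold at the level of filtered complexes, not merely as graded vector spaces. Only then is $E_r(Q)$ a spectral sequence converging to $H_{\db}(Q)$, and only then does the vanishing of $E_2(Q)$ follow from the second-page isomorphism. Everything else is formal.
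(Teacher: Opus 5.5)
Your proposal is correct and follows the paper's proof essentially step for step. The hypothesis makes $\widetilde E_2\to E_2$ an isomorphism, the filtered splitting \eqref{eq:SS-splitting} then forces $E_2(Q)=0$, finiteness of the filtration gives $H_{\db}(Q)=0$, and Proposition~\ref{prop:reduction-to-dbar} concludes; your added remarks on why the splitting is one of filtered complexes and why the filtration is bounded only spell out what the paper states in the paragraph preceding the theorem.
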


\begin{proof}
By Proposition~\ref{prop:spectral-sequences} and the hypothesis, the
inclusion $\widetilde E_2\to E_2$ is an isomorphism. By
\eqref{eq:SS-splitting}, $E_2(Q)=0$. The filtration is finite, with
$0\le p\le2k$ and $0\le q\le2\ell$, so $H_{\db}(Q)=0$.
Proposition~\ref{prop:reduction-to-dbar} completes the proof.
\end{proof}

\begin{corollary}\label{cor:BC-invariance}
Under the hypotheses of Theorem~\ref{thm:invariance}, invariant forms
also compute $GH_\partial^j(X)$, $GH_{BC}^j(X)$ and $GH_A^j(X)$ for
every $j$.
\end{corollary}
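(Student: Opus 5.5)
The plan is to derive this from Theorem~\ref{thm:invariance} and Proposition~\ref{prop:reduction-to-dbar}, so that no new spectral sequence argument is required. The main lever is the averaging splitting of Lemma~\ref{lem:averaging}: $U^\bullet(X)=\UU^\bullet\oplus Q^\bullet$, as a decomposition into subcomplexes for both $\partial$ and $\db$. Under this splitting, each of the four cohomologies of $X$ decomposes as the invariant cohomology plus the corresponding cohomology of $Q^\bullet$. Proposition~\ref{prop:reduction-to-dbar} states that all four comparison maps are isomorphisms once $H_{\db}(Q^\bullet)=0$. So the whole task is to see that the hypotheses of Theorem~\ref{thm:invariance} force $H_{\db}(Q^\bullet)=0$.

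First step: the proof of Theorem~\ref{thm:invariance} shows exactly this. Under \eqref{eq:comparison-hypothesis}, the splitting \eqref{eq:SS-splitting} gives $E_2(Q)=0$. The filtration is finite, so the spectral sequence $E_r(Q)$ converges to $H_{\db}(Q)$ and this cohomology vanishes. I would record this as the key intermediate claim.

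Second step: deduce the statements for $GH_\partial$, $GH_{BC}$ and $GH_A$.

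For $GH_\partial$, I would use conjugation. We have $\overline{Q^j}=Q^{-j}$, and complex conjugation exchanges $\partial$ and $\db$ because $\d$ is real and the grading is reversed. Hence $H_\partial(Q^\bullet)=\overline{H_{\db}(Q^\bullet)}=0$.

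For Bott--Chern and Aeppli, I would invoke the standard double-complex argument, as in \cite{ACK}*{Corollary 5.2} and \cite{Angella2019}. The setup is a bounded bigraded-type complex $(Q^\bullet,\partial,\db)$ with $\partial\db+\db\partial=0$ that is both $\db$-acyclic and $\partial$-acyclic. The standard zigzag lemma then shows that its Bott--Chern and Aeppli cohomologies vanish.

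Concretely, for Bott--Chern, take $x\in Q^j$ with $\partial x=\db x=0$. Then $x=\db y$ by $\db$-acyclicity. Next, $\db\partial y=-\partial\db y=-\partial x=0$, so $\partial y=\db z$. Then $x$ differs from $\partial\db$ of something by a term of the form $\db(y-\partial w)$ for a suitable $w$. We iterate, moving degree at each step, and boundedness of the grading $-n\le j\le n$ terminates the process.

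The Aeppli case can be handled the same way, or obtained by the duality between Bott--Chern and Aeppli cohomology. I expect this zigzag to be the only step needing care. The real content, however, is already contained in the cited Proposition~\ref{prop:reduction-to-dbar}, so I would simply apply that proposition with $H_{\db}(Q^\bullet)=0$ to conclude.
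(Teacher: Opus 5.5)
Your proposal is correct and follows the paper's proof: the paper also extracts $H_{\db}(Q^\bullet)=0$ from the proof of Theorem~\ref{thm:invariance} and then applies Proposition~\ref{prop:reduction-to-dbar}. Your extra sketch of why that proposition holds is a sound unpacking of a result the paper only cites. That sketch uses conjugation to get $H_\partial(Q^\bullet)=\overline{H_{\db}(Q^\bullet)}=0$, and a zigzag argument using both acyclicities and the bound $|j|\le n$ for the Bott--Chern and Aeppli cases.
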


\begin{proof}
The proof of Theorem~\ref{thm:invariance} shows that $H_{\db}(Q)=0$.
Apply Proposition~\ref{prop:reduction-to-dbar}.
\end{proof}

The hypothesis~\eqref{eq:comparison-hypothesis} concerns the base and
the lattice. For $q=0$, it is the untwisted comparison of invariant and
ordinary Dolbeault cohomology of $B$. For $q>0$, the
representations~\eqref{eq:fiber-action} are generally nontrivial and need
not be nilpotent. The nilpotent-module argument
of~\cite{ACK}*{Section~5} therefore does not apply automatically. Conversely,
by~\eqref{eq:SS-splitting}, a failure of~\eqref{eq:comparison-hypothesis}
produces a nonzero page $E_2(Q)$. Whether it survives to $H_{\db}(Q)$
depends on the higher differentials; see Example~\ref{ex:lattice-dependence}.

\subsection{Characters from eigenvalue sums and Model~II}
\label{subsec:characters}

By~\eqref{eq:fiber-action}, the algebra $\mathfrak b^{0,1}$ acts on $V^q$
through the character $e^0|_{\mathfrak b^{0,1}}$ and the single operator
$-\Psi_{\mathfrak k}$. As in Lemma~\ref{lem:wang}, the eigenvalues of
$\Psi_{\mathfrak k}$ on $V^q$ are sums of $q$ eigenvalues of
$A|_{\mathfrak k}$. The coefficient condition therefore reduces to line
bundles, mirroring the approach established by \cite{Kasuya2014} for computing the Dolbeault cohomology of solvmanifolds using local systems.

For $\kappa\in\C$, let $\mathcal E_\kappa\to B$ be the holomorphic line
bundle attached to the one-dimensional module $\C_{-\kappa e^0}$ of
$\mathfrak b^{0,1}$. It is the trivial line bundle with
\begin{equation}\label{eq:E-kappa}
\db_\kappa f=\db_Bf-\kappa f\,(e^0)^{0,1},
\end{equation}
where $(e^0)^{0,1}$ is the $(0,1)$-component of the closed real one-form
$e^0$ on $B$.

\begin{lemma}\label{lem:characters}
Let $\kappa_1,\ldots,\kappa_{2\ell}$ be the eigenvalues of
$A|_{\mathfrak k}$, listed with algebraic multiplicity, and put
$\kappa_I=\sum_{i\in I}\kappa_i$ for $I\subseteq\{1,\ldots,2\ell\}$.
Then $V^q$ has a filtration by $\mathfrak b^{0,1}$-submodules whose
successive quotients are the modules $\C_{-\kappa_Ie^0}$ with $|I|=q$.
If $A|_{\mathfrak k}$ is diagonalizable, then
\[
V^q\cong\bigoplus_{|I|=q}\C_{-\kappa_Ie^0},\qquad
\mathcal H^q\cong\bigoplus_{|I|=q}\mathcal E_{\kappa_I}.
\]
Consequently,~\eqref{eq:comparison-hypothesis} holds provided that,
for all $0\le a,b\le k$ and every
$\kappa\in\{\kappa_I:I\subseteq\{1,\ldots,2\ell\}\}$, invariant forms
induce isomorphisms
\begin{equation}\label{eq:character-hypothesis}
H^{a,b}_{\db}(\mathfrak b;\C_{-\kappa e^0})
\xrightarrow{\ \sim\ }H^b(B,\Omega^a_B\otimes\mathcal E_\kappa).
\end{equation}
\end{lemma}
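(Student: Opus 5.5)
The action of $Y\in\mathfrak b^{0,1}$ on $V^q$ in~\eqref{eq:fiber-action} is $-e^0(Y)\Psi_{\mathfrak k}$. So $\mathfrak b^{0,1}$ acts through a single operator multiplied by a scalar character. The plan is to use the spectral structure of $\Psi_{\mathfrak k}$ to split $V^q$, and then carry the splitting to the geometric side via Lemma~\ref{lem:GM}.

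\textbf{Filtration.} First triangularize $A|_{\mathfrak k}^T$ on $\mathfrak k_\C^*$. Choose a basis $\eta_1,\dots,\eta_{2\ell}$ such that $\eta_i\circ A|_{\mathfrak k}\in\kappa_i\eta_i+\operatorname{span}(\eta_{j}:j<i)$. Order the wedge monomials $\eta_I$, $|I|=q$, compatibly, for instance lexicographically by the sorted index tuple. By~\eqref{eq:Psi-expanded}, $\Psi_{\mathfrak k}$ is triangular in this basis with diagonal entries $\kappa_I$, exactly as in the proof of Lemma~\ref{lem:wang}. The spans of initial segments are therefore $\Psi_{\mathfrak k}$-invariant, hence $\mathfrak b^{0,1}$-submodules. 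On the quotient spanned by $\eta_I$, the element $Y$ acts by $-e^0(Y)\kappa_I=(-\kappa_I e^0)(Y)$, which is the module $\C_{-\kappa_Ie^0}$. If $A|_{\mathfrak k}$ is diagonalizable, an eigenbasis makes $\Psi_{\mathfrak k}$ diagonal and gives the direct sum decomposition of $V^q$.

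\textbf{Geometric side.} The holomorphic bundle attached to a module $M$ in \S\ref{subsec:torus-bundle} is functorial and additive in $M$. Its bundle for $\C_{-\kappa e^0}$ is $\mathcal E_\kappa$ by~\eqref{eq:E-kappa}: with $m$ a basis vector, the sum $\sum_a\bar\xi^a(-\kappa e^0(\bar Y_a))$ equals $-\kappa(e^0)^{0,1}$. Hence $\mathcal V^q\cong\bigoplus\mathcal E_{\kappa_I}$, and Lemma~\ref{lem:GM} transfers this to $\mathcal H^q$.

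\textbf{Hypothesis reduction.} In the diagonalizable case, the comparison map~\eqref{eq:comparison-map} is additive in $M$. So~\eqref{eq:comparison-hypothesis} for $V^q$ is the direct sum of the maps~\eqref{eq:character-hypothesis} over $|I|=q$, and the conclusion is immediate.

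In general, the comparison map is natural in $M$. A short exact sequence $0\to M'\to M\to M''\to0$ of $\mathfrak b^{0,1}$-modules gives short exact sequences both of the invariant complexes $C^\bullet(\mathfrak b^{0,1},\bigwedge^a\mathfrak b^{*1,0}\otimes M)$ and of the Dolbeault complexes of the associated bundles. The latter are exact because the bundles are trivial with the same frames. Their long exact cohomology sequences are compatible, so if the comparison is an isomorphism for $M'$ and $M''$ in all degrees $b$, the five lemma gives it for $M$. Inducting along the filtration then reduces the hypothesis to the quotients $\C_{-\kappa_Ie^0}$, which is~\eqref{eq:character-hypothesis}.

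The main obstacle is this naturality step. One must check that the comparison map~\eqref{eq:comparison-map} commutes with the connecting homomorphisms, and that the Dolbeault isomorphism is compatible with them. Both follow from the fact that the comparison is induced by an inclusion of short exact sequences of complexes. A smaller point is confirming that the triangular ordering really yields submodules. This holds because $\mathfrak b^{0,1}$ acts only through $\Psi_{\mathfrak k}$ times a scalar.
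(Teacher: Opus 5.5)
Your proposal is correct and follows the paper's proof essentially step for step. You triangularize the transpose of $A|_{\mathfrak k}$ to get a $\Psi_{\mathfrak k}$-stable, hence $\mathfrak b^{0,1}$-stable, flag with quotients $\C_{-\kappa_Ie^0}$, split it by a wedge eigenbasis in the diagonalizable case and transfer the splitting to $\mathcal H^q$ via Lemma~\ref{lem:GM}, and then reduce the hypothesis using exactness of $M\mapsto\mathcal M$, the long exact sequences and the five lemma inductively along the flag. Your explicit check that the lexicographic order on sorted index tuples makes $\Psi_{\mathfrak k}$ triangular is a detail the paper leaves implicit, and it is correct.
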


\begin{proof}
Triangularize the transpose of $A|_{\mathfrak k}$ on
$\mathfrak k_\C^*$. As in the proof of Lemma~\ref{lem:wang},
$\Psi_{\mathfrak k}$ is then triangular on wedge monomials, with
diagonal entries $\kappa_I$. Since $\mathfrak b^{0,1}$ acts by multiples
of $\Psi_{\mathfrak k}$, the corresponding flag of
$\Psi_{\mathfrak k}$-invariant subspaces is a flag of submodules with the
stated quotients. For diagonalizable $A|_{\mathfrak k}$, a wedge
eigenbasis splits it, and Lemma~\ref{lem:GM} transfers the splitting to
$\mathcal H^q$.

The construction $M\mapsto\mathcal M$ is exact. A short exact sequence of
modules gives short exact sequences of invariant and of smooth Dolbeault
complexes, compatible with the inclusion. By the resulting long exact
sequences and the five lemma, induction on the length of the flag shows
that~\eqref{eq:comparison-map} is an isomorphism for $M=V^q$ once it is
an isomorphism for the one-dimensional quotients.
\end{proof}

Since $A|_{\mathfrak k}\in\mathfrak{sp}(\mathfrak k,\sigma)$, the set of
sums $\kappa_I$ is symmetric under $\kappa\mapsto-\kappa$. It contains
$\kappa=0$, from $I=\varnothing$, for which~\eqref{eq:character-hypothesis}
is the untwisted comparison on $B$. For Model~II, the results of this
section and of Section~\ref{sec:diagonal} combine as follows.

\begin{corollary}\label{cor:modelII-quotients}
Let $A$ be diagonalizable over $\R$, let $\mathcal J$ be the Model~II structure
of~\eqref{eq:models}, and let $\mathfrak k$ and $\mathfrak b$ be as in
Section~\ref{subsec:adapted}. Assume that $\Gamma$
satisfies~\eqref{eq:lattice-hypotheses}, and
that~\eqref{eq:character-hypothesis} holds for every sum $\kappa$ of a
subfamily of $(\zeta_1,-\zeta_1,\ldots,\zeta_s,-\zeta_s)$. Then, with
$N_p$ as in Theorem~\ref{thm:master},
\[
\dim GH^{n-p}_{\db}(X)=\dim GH^{p-n}_{\db}(X)=N_p+N_{p-1}
\qquad(0\le p\le2n),
\]
and invariant forms compute $GH_\partial$, $GH_{BC}$ and $GH_A$ of $X$.
Here $B$ is a compact quotient of $G_{\mathfrak b}$, whose Lie algebra
$\g_{A_c}$ has
$\spec(A_c)=\{\lambda_1,\lambda_1,\ldots,\lambda_h,\lambda_h,\mu\}$ and
carries the Model~II complex structure.
\end{corollary}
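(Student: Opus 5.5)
The plan is to realize the Model~II structure as an instance of Proposition~\ref{prop:bundle-construction} and then chain Lemma~\ref{lem:characters}, Theorem~\ref{thm:invariance}, Corollary~\ref{cor:BC-invariance} and Theorem~\ref{thm:master}.

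\textbf{Step 1: the fibered data.} I will take
\[
\h_c=\operatorname{span}\{e_1,\ldots,e_{2h},e_d\},\qquad
\mathfrak k=\operatorname{span}\{e_{r_m},e_{r_m+1}:1\le m\le s\}.
\]
Both are $A$-invariant, because they are spanned by eigenvectors. Their dimensions are $2k-1$ with $k=h+1$, and $2\ell$ with $\ell=s$, and $k+\ell=n$ holds.

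On $\mathfrak b=\g_{A_c}$, the form $\Omega=\Theta\wedge(e^0+\ii e^d)$ is pulled back from $\mathfrak b$. It is decomposable of top holomorphic degree $k$. By the computation in Lemma~\ref{lem:model-integrability}, carried out on $\mathfrak b$, it satisfies $\d_{\mathfrak b}\Omega=-\Lambda e^0\wedge\Omega$. Integrability of the associated almost complex structure $J_B$ then follows from Proposition~\ref{prop:integrable-differentials}, applied to $\mathfrak b$ with $\rho=\Omega$ of type $k$.

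The form $\omega=\omega_s$ is closed, also by Lemma~\ref{lem:model-integrability}. It restricts to a symplectic form on $\mathfrak k$. Hence $\rho_{\mathrm{II}}=e^{\ii\omega_s}\wedge\pi^*\Omega$ has exactly the shape~\eqref{eq:bundle-spinor}, and $\spec(A_c)$ is as stated.

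\textbf{Step 2: from the hypothesis to the comparison theorem.} The eigenvalues of $A|_{\mathfrak k}$ are $\zeta_1,-\zeta_1,\ldots,\zeta_s,-\zeta_s$, and $A|_{\mathfrak k}$ is diagonalizable. The sums $\kappa_I$ in Lemma~\ref{lem:characters} are therefore exactly the subfamily sums appearing in the hypothesis. Lemma~\ref{lem:characters} then gives~\eqref{eq:comparison-hypothesis} for all $a,b,q$.

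With the lattice hypotheses~\eqref{eq:lattice-hypotheses} in force, Theorem~\ref{thm:invariance} gives $GH^j_{\db}(\g_A)\cong GH^j_{\db}(X)$ for all $j$. Corollary~\ref{cor:BC-invariance} gives the statement for $GH_\partial$, $GH_{BC}$ and $GH_A$.

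\textbf{Step 3: the dimension count.} Theorem~\ref{thm:master} computes $\dim GH^{n-p}_{\db}(\g_A)=N_p+N_{p-1}$. The existence of a lattice forces $\tr A=0$, so~\eqref{eq:symmetry} yields the equality with $\dim GH^{p-n}_{\db}$. Transporting both through the isomorphism of Step~2 gives the claim on $X$.

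\textbf{Main obstacle.} The only delicate point is Step~1: checking that the Model~II spinor genuinely coincides with the construction of Proposition~\ref{prop:bundle-construction}, with $\Omega$ a generator of the canonical line of an \emph{integrable} complex structure on $\mathfrak b$. I would verify this directly as above, via the annihilator description~\eqref{eq:annihilator} and the closedness relation $\d_{\mathfrak b}\Omega=c(-\Lambda e^0)\Omega$. The rest is bookkeeping of the index identifications $k=h+1$ and $\ell=s$.
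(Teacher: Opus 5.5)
Your proposal is correct and follows the paper's own argument, which it leaves implicit ("the results of this section and of Section~\ref{sec:diagonal} combine"). You realize Model~II through Proposition~\ref{prop:bundle-construction} with $\h_c=\operatorname{span}\{e_1,\ldots,e_{2h},e_d\}$, $\mathfrak k$ spanned by the opposite pairs, $\omega=\omega_s$ and $\Omega=\Theta\wedge(e^0+\ii e^d)$, and then chain Lemma~\ref{lem:characters}, Theorem~\ref{thm:invariance}, Corollary~\ref{cor:BC-invariance} and Theorem~\ref{thm:master} with~\eqref{eq:symmetry} via $\tr A=0$; your explicit check of the integrability of $J_B$ and of $k=h+1$, $\ell=s$ supplies exactly the details the paper omits.
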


\begin{example}\label{ex:lattice-dependence}
Let $n=2$, $h=0$ and $s=1$, with $\zeta=\zeta_1\ne0$. Since $\tr A=0$,
we have $\mu=0$. Thus $d=1$, $r_1=2$, and
\[
Ae_1=0,\qquad Ae_2=\zeta e_2,\qquad Ae_3=-\zeta e_3 .
\]
The Model~II spinor $\rho=e^{\ii e^2\wedge e^3}\wedge(e^0+\ii e^1)$ has
type $1$, with $\mathfrak k=\operatorname{span}\{e_2,e_3\}$. The algebra
$\mathfrak b$ is abelian, spanned by the images of $e_0$ and $e_1$. By
Corollary~\ref{cor:generic},
\[
\bigl(\dim GH^2_{\db},\ldots,\dim GH^{-2}_{\db}\bigr)(\g_A)
=(1,2,2,2,1).
\]

Choose $M\in SL_2(\Z)$ with $\tr M\ge3$, and let $\lambda>1$ be its larger
eigenvalue. For $t_0=\log\lambda/|\zeta|$, the restriction of $e^{t_0A}$
to $\mathfrak k$ is conjugate to $M$. It therefore preserves a lattice
$\Gamma_{\mathfrak k}\subset\mathfrak k$. For $y_0>0$, put
\[
\Gamma=\Z t_0\ltimes\bigl(\Z y_0e_1\oplus\Gamma_{\mathfrak k}\bigr)<G_A .
\]
It satisfies~\eqref{eq:lattice-hypotheses} with
$\Gamma_{\mathfrak b}=\Z t_0\oplus\Z y_0$. The base $B$ is the elliptic
curve $\C/(\Z t_0+\ii\Z y_0)$, with coordinate $z=x+\ii y$, $e^0=\d x$
and $e^1=\d y$. The sums $\kappa_I$ lie in $\{0,\zeta,-\zeta\}$. Thus
$\mathcal H^0$ and $\mathcal H^2$ are holomorphically trivial, and
$\mathcal H^1\cong\mathcal E_\zeta\oplus\mathcal E_{-\zeta}$.

For $\kappa=0$,~\eqref{eq:character-hypothesis} holds, as on every
complex torus. For $\kappa=\pm\zeta$, the invariant side vanishes:
$\mathfrak b^{0,1}$ is spanned by $\bar w=\tfrac12(e_0+\ii e_1)$, which
acts on $\bigwedge^a\mathfrak b^{*1,0}\otimes\C_{-\kappa e^0}$ by the
nonzero scalar $-\kappa/2$. On the other hand,
$(e^0)^{0,1}=\tfrac12\d\bar z$ and
$\db_Be^{\ii\kappa y}=-\tfrac\kappa2e^{\ii\kappa y}\d\bar z$. Hence
\[
\db_\kappa f=e^{-\ii\kappa y}\,\db_B\bigl(\underbrace{e^{\ii\kappa y}f}_{=:g}\bigr).
\]
For a function $f$ on $B$, the function $g=e^{\ii\kappa y}f$ on $\C$
satisfies $g(z+t_0)=g(z)$ and $g(z+\ii y_0)=e^{\ii\kappa y_0}g(z)$. If
$g$ is holomorphic, $|g|$ is doubly periodic, so $g$ is constant, and it
vanishes unless $e^{\ii\kappa y_0}=1$. Indeed,
\begin{align*}
    C=g(z+\ii y_0)=e^{\ii\kappa y_0}g(z)=e^{\ii\kappa y_0}C\implies C(1-e^{\ii \kappa y_0})=0.
\end{align*}
Thus $\mathcal E_\kappa$ is
holomorphically trivial if $\kappa y_0\in2\pi\Z$, with frame
$e^{-\ii\kappa y}$. Otherwise, it is a nontrivial line bundle of degree
zero on an elliptic curve, and all its cohomology vanishes.

If $\zeta y_0\notin2\pi\Z$, Lemma~\ref{lem:characters} and
Theorem~\ref{thm:invariance} give $GH_{\db}(X)\cong GH_{\db}(\g_A)$.
If $\zeta y_0\in2\pi\Z$, then~\eqref{eq:character-hypothesis} fails
for $\kappa=\pm\zeta$. In~\eqref{eq:SS-splitting}, $E_2(Q)$ vanishes in
the rows $q=0,2$ and equals the row $q=1$ of $E_2$. By
\eqref{eq:second-pages}, this row has dimensions $2,4,2$ for
$p=0,1,2$. A spectral sequence with a single nonzero row degenerates, so
\[
\bigl(\dim GH^2_{\db},\ldots,\dim GH^{-2}_{\db}\bigr)(X)
=(1,4,6,4,1).
\]
Thus, for the same $\g_A$ and $\mathcal J$, invariant forms compute
$GH_{\db}(X)$ for some lattices but not for others. Notice that the dependence on the lattice is not a surprise; the Dolbeault cohomology of complex solvmanifolds is also known to vary with the choice of $\Gamma$, as in \cite{Kasuya13}*{Example 1}.
\end{example}

\bibliography{Pedro}
\end{document}